\documentclass [11pt]{article}
\usepackage{a4}
\usepackage{paralist}
\usepackage{graphicx}
\usepackage{amssymb}
\usepackage{float}
\usepackage{amsmath}
\usepackage{psfrag}
\usepackage{amsmath,amsthm}
\usepackage{cleveref}

\setcounter{secnumdepth}{5}
\setcounter{tocdepth}{5}
\hbadness 10000

\setlength{\parindent}{15pt}         % Sets horizontal space left at each
                                    % paragraph.
%\setlength{\parskip}{\baselineskip} % Paragraphs separated by blank line.

\textwidth=410pt
\oddsidemargin=10pt

\usepackage{amsmath,amsthm}

\theoremstyle{plain}% default
\newtheorem{thm}{Theorem}[section]

\newtheorem{lem}[thm]{Lemma}
\newtheorem{prop}[thm]{Proposition}
\newtheorem{cor}[thm]{Corollary}

\theoremstyle{definition}
\newtheorem{defn}[thm]{Definition}
%[section]
\newtheorem{ex}[thm]{Example}
%[section]

\theoremstyle{remark}
\newtheorem{rem}[thm]{Remark}%[section]
\newtheorem*{note}{Note}

\title{\textbf{Residual deficiency}}
\author{Mariano Zer\'{o}n-Medina Laris}

\begin{document}

\maketitle

\begin{abstract}
We introduce a new real valued invariant for finitely presented groups called residual deficiency. Its main property is the following. Let $G$ be a finitely presented group. If the residual deficiency of $G$ is greater than one, then $G$ has a finite index subgroup with deficiency greater than one. The latter property is strong. For instance, such a group is large (\cite{baums-pride}), has positive rank gradient and positive first $L^2$-Betti number, among other properties.

We also compute the residual deficiency of some well known families of presentations, prove that residual deficiency minus one is supermultiplicative with respect to finite index normal subgroups and find lower bounds for the residual deficiency of quotients.
\end{abstract}

\section{Introduction}

We say a group $G$ is large if it contains a finite index subgroup which surjects onto the non-abelian free group of rank $2$. Such property, introduced in \cite{pride large}, is invariant under finite index subgroups, finite index supergroups and prequotients. Being large is a strong property, it implies that the group in question contains a non-abelian free subgroup \cite{neumann}, that it is $SQ$-universal (\cite{pride large}) (every countable group is a subgroup of a quotient of $G$), it has subgroups with arbitrarily large first Betti number \cite{lubotzky}, has uniformly exponential word growth \cite{harpe}, as well as subgroup growth of strict type $n^{n}$ \cite{lubotzky segal}, among other properties.
\vskip 2mm

In this paper, we introduce a new real valued invariant for finitely presented groups which we call \emph{residual deficiency}. Let $G$ be a finitely presented group, then the main property of the residual deficiency of $G$ is that it helps construct a lower bound for the deficiency of some finite index subgroups of $G$. This is useful because it helps detect largeness and study the class of finitely presented large groups, as we now explain.

 % .
\vskip 2mm

Deficiency and largeness were first related in \cite{baums-pride} by B. Baumslag and S. Pride. The result says that any group with deficiency greater than one is large. Largeness is invariant under finite index supergroups. Hence, if $G$ is a group with a finite index subgroup $H$ which has deficiency greater than one, then $G$ is also large. Therefore, looking at the deficiency of finite index subgroups of a group $G$ can help detect largeness. 
\vskip 2mm

Having a finite index subgroup with deficiency greater than one is much stronger than being large. By recent results of I. Agol (\cite{agol}), the fundamental group of a closed, orientable, hyperbolic $3$-dimensional manifold is large. By results in \cite{epstein}, we know that these groups, along with all its finite index subgroups have deficiency zero. This gives a substantial collection of finitely presented groups which are large but do not have a finite index subgroup with deficiency greater than one. Therefore, the latter property is only enjoyed by some large groups and hence can be used to study the class of large finitely presented groups.
\vskip 2mm

In Section $2$ we introduce real valued submultiplicative and supermultiplicative invariants. A real valued invariant $I$, submultiplicative with respect to finite index subgroups, is one such that given a finite index subgroup $H$ in $G$ satisfies
\begin{equation}\label{eq: submult first}
\dfrac{I(H)}{|G:H|}\leq I(G).
\end{equation}
Given an invariant $I$ as above, C.T.C. Wall considered the following gradient (\cite{wall})
\begin{equation}\label{eq: gradient first}
\tilde{I}(G)=\underset{H\underset{f}{\leqslant}G }{\text{inf}} \left\{ \dfrac{I(H)}{|G:H|} \right\}.
\end{equation}
He remarked that such invariant satisfies
\begin{equation}\label{eq: gen euler}
\tilde{I}(G)=\dfrac{\tilde{I}(H)}{|G:H|},
\end{equation}
where $H$ is a finite index subgroup of $G$, though he did not prove it. This property characterises what he defined in \cite{wall} as generalised Euler characteristics. If the inequality in \cref{eq: submult first} is inverted, we obtain a supermultiplicative invariant with respect to finite index subgroups. In this case, a gradient is defined by taking the supremum rather than the infimum.
\vskip 2mm

Many examples of submultiplicative and supermultiplicative invariants can be found in the literature. For instance, consider the rank of a group $G$, which we denote by $d(G)$. Subtracting $1$ from $d(G)$ gives a submultiplicative invariant. Its associated gradient is called the \emph{rank gradient} which we denote by $RG(G)$. Subtracting $1$ from the deficiency of a finitely presented group $G$ is an example of a supermultiplicative invariant. We call the gradient obtained from this the \emph{deficiency gradient} which we denote by $DG(G)$. In Section $2$ we prove the equality of \cref{eq: gen euler}, give an overview of some of these invariants and says how they relate to one another.
\vskip 2mm

Section $3$ starts by defining the residual deficiency of a finitely presented group. We prove its main property which is that it helps construct a lower bound for the deficiency of some finite index subgroups of the group. We then relate it to the gradients studied in Section $2$ and compute the residual deficiency of some well known families of finite presentations. For example, families of presentations which define generalised triangle groups, Coxeter groups and generalised tetrahedral groups.

\vskip 2mm

In Section $4$ we prove that the residual deficiency of $G$ minus $1$ is a supermultiplicative invariant with respect to finite index normal subgroups. That is
\begin{equation}\label{eq: supermult res def}
rdef(G)-1\leq \dfrac{rdef(H)-1}{|G:H|},
\end{equation}
where $H$ is a finite index normal subgroup of $G$. This allows us to define the residual deficiency gradient. We show, however, that this is equal to the deficiency gradient of the group.
\vskip 2mm

In Section $5$, we identify conditions that give a strict inequality in \cref{eq: supermult res def}. So, $G$ may have residual deficiency equal to one, but a strict inequality in \cref{eq: supermult res def} implies it has a finite index subgroup with residual deficiency greater than one. This is useful as most of the properties that interest us are invariant under finite index supergroups (for example being large or satisfying $\tilde{I}(G)>0$, where $\tilde{I}$ is any generalised Euler characteristic). These conditions, unfortunately, do not readily apply to any group $G$, but do to quotients $G/M$ where $G$ is not residually finite.

\vskip 2mm

\textbf{Acknowledgements}
\vskip 2mm

This work is part of the author's PhD done under the supervision of Jack Button. The author would like to thank Jack for all the support and advice needed for the completion of this work. Also, the author would like to thank the Mexican Council of Science and Technology (CONACyT) and the Cambridge Overseas Trust for their financial support all these years.

\section{Submultiplicative and supermultiplicative invariants\\ and generalised Euler characteristics}\label{section 1}

Consider a real valued invariant $I$ defined over a class of groups $\mathfrak{L}$. Let $\mathfrak{L}$ be closed under finite index subgroups. By this we mean that if $G$ is in $\mathfrak{L}$ and $H$ is a finite index subgroup of $G$, then $H$ is in $\mathfrak{L}$. The invariant $I$ is said to be \emph{submultiplicative} with respect to finite index (finite index normal) subgroups if 
\begin{equation}\label{eq: submult}
\dfrac{I(H)}{|G:H|}\leq I(G),
\end{equation}
where $G$ is in $\mathfrak{L}$, $H$ is a finite index (finite index normal) subgroup of $G$ and where $|G:H|$ denotes the index of $H$ in $G$.
\vskip 2mm

In an analogous way, $I$ is \emph{supermultiplicative} with respect to finite index (finite index normal) subgroups if
\[
\dfrac{I(H)}{|G:H|}\geq I(G).
\]
In problem $E10$ of \cite{wall}, C.T.C. Wall mentions that given a submultiplicative invariant $I$ as above, one may define
\begin{equation}\label{eq: gradient}
\tilde{I}(G)=\underset{H\underset{f}{\leqslant}G }{\text{inf}} \left\{ \dfrac{I(H)}{|G:H|} \right\}.
\end{equation}
He remarks, without giving proof, that if $H$ is a finite index subgroup of $G$, then 
\[
\tilde{I}(G)=\dfrac{\tilde{I}(H)}{|G:H|}. 
\]
An invariant such as $I$ which satisfies the previous equality with respect to finite index subgroups is called a \emph{generalised Euler characteristic}.
\vskip 2mm

In a similar vein, given a supermultiplicative invariant $I$, we define
\[
\tilde{I}(G)=\underset{H\underset{f}{\leqslant}G }{\text{sup}} \left\{ \dfrac{I(H)}{|G:H|} \right\}.
\]

\begin{note}
If $I$ is submultiplicative only with respect to finite index normal subgroups of $G$, then define
\[
\tilde{J}(G)=\underset{H\underset{f}{\trianglelefteq}G }{\text{inf}} \left\{ \dfrac{I(H)}{|G:H|} \right\}.
\]
In the case $I$ is supermultiplicative only with respect to finite index normal subgroups, then define
\[
\tilde{J}(G)=\underset{H\underset{f}{\trianglelefteq}G }{\text{sup}} \left\{ \dfrac{I(H)}{|G:H|} \right\}.
\]
\cref{prop: 2 section 2} below shows that it does not matter whether we consider finite index subgroups or only finite index normal subgroups.
\end{note}
\vskip 2mm

\begin{prop}\label{prop: 2 section 2}
Let $G$ be a group in a class of groups $\mathfrak{L}$ where $\mathfrak{L}$ is closed under finite index subgroups. Suppose $I$ is an invariant for groups in $\mathfrak{L}$, submultiplicative with respect to finite index subgroups. Then
\begin{equation}\label{eq: prop 2}
\tilde{I}(G)=\tilde{J}(G).%\underset{H\underset{f}{\trianglelefteq}G }{\text{inf}} \left\{ \dfrac{I(H)}{|G:H|} \right\}.
\end{equation}
If $I$ is submultiplicative with respect to finite index normal subgroups, then
\begin{equation}\label{eq; prop 2}
\tilde{I}(G)=\tilde{J}(G).%\underset{H\underset{f}{\leqslant}G }{\text{inf}} \left\{ \dfrac{I(H)}{|G:H|} \right\}.
\end{equation}
\end{prop}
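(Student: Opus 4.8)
The plan is to prove the two inequalities $\tilde{I}(G)\le\tilde{J}(G)$ and $\tilde{J}(G)\le\tilde{I}(G)$ separately. The first is immediate from the definitions: every finite index normal subgroup of $G$ is in particular a finite index subgroup of $G$, so the infimum defining $\tilde{I}(G)$ ranges over a set containing the one defining $\tilde{J}(G)$, and an infimum taken over a larger set can only be smaller. (Both sets are nonempty, as they contain $G$ itself.)

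For the reverse inequality I would fix an arbitrary finite index subgroup $H\leqslant_f G$ and pass to its normal core $N=\bigcap_{g\in G}gHg^{-1}$. This $N$ is a normal subgroup of $G$ contained in $H$, and it has finite index in $G$ because $G/N$ embeds into the finite symmetric group on the cosets $G/H$; since $\mathfrak{L}$ is closed under finite index subgroups, both $H$ and $N$ lie in $\mathfrak{L}$, so $I$ is defined on them. Now $N$ is a finite index subgroup of $H$, and moreover, being normal in $G$, it is normal in $H$. Applying submultiplicativity of $I$ to the pair $N\leqslant_f H$ gives $I(N)/|H:N|\le I(H)$, and combining this with the tower law $|G:N|=|G:H|\cdot|H:N|$ yields
\[
\frac{I(N)}{|G:N|}=\frac{1}{|G:H|}\cdot\frac{I(N)}{|H:N|}\le\frac{I(H)}{|G:H|}.
\]
Since $N\trianglelefteq_f G$, the left-hand side is one of the quantities over which $\tilde{J}(G)$ takes its infimum, so $\tilde{J}(G)\le I(H)/|G:H|$. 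As $H$ was an arbitrary finite index subgroup, taking the infimum over all such $H$ gives $\tilde{J}(G)\le\tilde{I}(G)$, which together with the first inequality establishes \eqref{eq: prop 2}.

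For the second assertion one only needs to note that the submultiplicativity hypothesis was invoked exactly once, in the step $I(N)/|H:N|\le I(H)$, and there $N$ is a \emph{normal} finite index subgroup of $H$. Hence the argument goes through verbatim under the weaker assumption that $I$ is submultiplicative only with respect to finite index normal subgroups, which proves \eqref{eq; prop 2}. There is no serious obstacle here; the only point requiring a moment's care is the passage to the normal core and the observation that $N$, being normal in $G$ and contained in $H$, is normal in $H$ — this is precisely what makes the weaker hypothesis suffice.
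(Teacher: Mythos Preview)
Your proof is correct and follows essentially the same approach as the paper: both argue the easy inequality from containment of index sets, then pass from an arbitrary finite index subgroup $H$ to its normal core and apply submultiplicativity for the pair $(\text{core},H)$, using the tower law to compare the two quotients. Your version is slightly more explicit (e.g.\ justifying finite index of the core and observing that the core is normal in $H$), but the argument is the same.
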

\begin{proof}
The set of finite index normal subgroups is contained in the set of finite index subgroups, therefore
\[
\underset{H\underset{f}{\leqslant}G }{\text{inf}} \left\{ \dfrac{I(H)}{|G:H|} \right\}
\leq\underset{H\underset{f}{\trianglelefteq}G }{\text{inf}} \left\{ \dfrac{I(H)}{|G:H|} \right\}.
\]
To prove the other inequality consider $H$ a finite index subgroup of $G$. Let $K$ be the core of $H$ in $G$, defined as the intersection of all the conjugates of $H$ in $G$. Note that $K$ is a finite index normal subgroup of $G$. Moreover, as $K\trianglelefteq_f H$,
then
\[
\dfrac{I(K)}{|H:K|}\leq I(H).
\]
This implies
\[
\dfrac{I(K)}{|G:K|}=\dfrac{I(K)}{|G:H||H:K|}\leq \dfrac{I(H)}{|G:H|}.
\]
As this holds for all finite index subgroup $H$ of $G$, the result follows.
\end{proof}
\vskip 2mm

\begin{note}
The previous proof equally works if $I$ is a supermultiplicative invariant. Just reverse the inequalities and change supremum for infimum in \cref{eq: prop 2} and \cref{eq; prop 2}.
\end{note}

\begin{prop}\label{prop: 1 section 2}
Let $G$ be a group in a class of groups $\mathfrak{L}$ where $\mathfrak{L}$ is closed under finite index subgroups. Suppose $I$ is an invariant defined over the groups in $\mathfrak{L}$ and assume $I$ is submultiplicative with respect to finite index normal subgroups of $G$. Then 
\begin{equation}\label{mult of I bar}
\tilde{I}(G)=\dfrac{\tilde{I}(H)}{|G:H|} 
\end{equation}
for all finite index subgroups $H$ of $G$. In particular $\tilde{I}$ is a generalised Euler characteristic.
\vskip 2mm

If $I$ is supermultiplicative with respect to finite index normal subgroups of $G$, then $\cref{mult of I bar}$ holds too.
\end{prop}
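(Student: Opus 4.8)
The plan is to prove the identity $\tilde{I}(G) = \tilde{I}(H)/|G:H|$ for a submultiplicative invariant $I$ (the supermultiplicative case being symmetric) by a double inequality, exploiting the key fact that the finite index subgroups of $H$ are cofinal among the finite index subgroups of $G$ in a suitable sense. By \Cref{prop: 2 section 2} I may freely compute $\tilde I$ using either all finite index subgroups or only finite index normal ones, and I will switch between these as convenient.

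First I would prove $\tilde{I}(G) \leq \tilde{I}(H)/|G:H|$. Take any finite index subgroup $K \leqslant_f H$. Then $K \leqslant_f G$ with $|G:K| = |G:H|\,|H:K|$, so by definition of the infimum $\tilde I(G) \le I(K)/|G:K| = \big(I(K)/|H:K|\big)/|G:H|$. Taking the infimum over all such $K$ gives $\tilde I(G) \le \tilde I(H)/|G:H|$ directly from \Cref{eq: gradient}.

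For the reverse inequality $\tilde{I}(H)/|G:H| \leq \tilde{I}(G)$, I would take an arbitrary finite index subgroup $L \leqslant_f G$ and show $\tilde I(H)/|G:H| \le I(L)/|G:L|$, then take the infimum over $L$. The natural move is to pass to $L \cap H$, which is finite index in both $G$ and $H$. Since $L\cap H \leqslant_f H$, we get $\tilde I(H) \le I(L\cap H)/|H : L\cap H|$. To relate $I(L\cap H)$ back to $I(L)$ I use submultiplicativity: $L \cap H \leqslant_f L$, so $I(L\cap H)/|L:L\cap H| \le I(L)$. Combining these with the index identities $|H:L\cap H|\,|G:H| = |G : L\cap H| = |G:L|\,|L:L\cap H|$ (valid since all subgroups involved have finite index) yields $\tilde I(H)/|G:H| \le I(L)/|G:L|$ after cancelling $|L : L\cap H|$; taking the infimum over $L$ finishes it. Note submultiplicativity with respect to finite index \emph{normal} subgroups suffices here provided one is slightly careful: one can instead pass through the core of $L\cap H$ in $L$, exactly as in the proof of \Cref{prop: 2 section 2}, so that only normal-subgroup submultiplicativity is invoked.

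The main obstacle, and the only real point requiring care, is the bookkeeping with indices and making sure every invocation of submultiplicativity is between a finite index \emph{normal} subgroup and its overgroup, since that is all we are assuming; the chain $L\cap H \trianglelefteq_f \mathrm{core}_L(L\cap H)$-type argument of \Cref{prop: 2 section 2} handles this cleanly. The supermultiplicative case is obtained verbatim by reversing every inequality and replacing each infimum with a supremum, as already noted after \Cref{prop: 2 section 2}. Finally, the statement that $\tilde I$ is a generalised Euler characteristic is immediate from \Cref{mult of I bar} together with Wall's definition recalled in the introduction.
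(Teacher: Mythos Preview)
Your proposal is correct and follows essentially the same approach as the paper: one inequality is immediate from the fact that finite index subgroups of $H$ are finite index subgroups of $G$, and the other is obtained by intersecting an arbitrary finite index subgroup with $H$, passing to a core to ensure normality, and applying submultiplicativity. The only cosmetic differences are that the paper proves the two inequalities in the opposite order and takes the core in $G$ (starting from a \emph{normal} $K\trianglelefteq_f G$ via \Cref{prop: 2 section 2}) rather than the core in $L$ as you suggest; both choices work for the same reason.
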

\begin{proof}
Assume $I$ is submultiplicative. The proof for when $I$ is supermultiplicative is the same but with the inequalities inverted.
\vskip 2mm

Consider $K$ a finite index normal subgroup of $G$. Let $N=K\cap H$ and consider $Core_G(N)$, the core of $N$ in $G$. Since $Core_G(N)$ is finite index in $H$, then \cref{eq: gradient} and \cref{eq: submult} imply
\[
\dfrac{\tilde{I}(H)}{|G:H|}\leq \dfrac{I\big(Core_G(N)\big)}{|G:H||H:Core_G(N)|}=\dfrac{I\big(Core_G(N)\big)}{|G:Core_G(N)|}
\]

\[
=\dfrac{I\big(Core_G(N)\big)}{|G:K||K:Core_G(N)|}\leq \dfrac{I(K)}{|G:K|}.
\]
\vskip 2mm

As this holds for all finite index normal subgroups $K$ of $G$, then
\[
\dfrac{\tilde{I}(H)}{|G:H|}\leq \tilde{I}(G).
\]
By \cref{prop: 2 section 2} assume $\tilde{I}$ is defined over all finite index subgroups. If $L$ is a finite index subgroup of $H$, then $L$ is a finite index subgroup of $G$. Hence, \cref{eq: gradient} implies
\[
\tilde{I}(G)\leq\dfrac{I(L)}{|G:L|}=\dfrac{I(L)}{|G:H||H:L|}.
\]
The previous inequality holds for all finite index subgroups of $H$, therefore
\[
\tilde{I}(G)\leq\dfrac{\tilde{I}(H)}{|G:H|}.
\]
\end{proof}

Next we give examples of invariants for finitely generated and finitely presented groups which are submultiplicative or supermultiplicative. The objective is to see how they relate to each other and set the stage for a new supermultiplicative invariant which will be introduced in the following section.
\vskip 2mm

The \emph{rank} $d(G)$ of a finitely generated group $G$ is defined as the minimum number of elements needed to generate $G$. Let $G$ be a finitely generated group generated by a finite set $X$ of size $n$
and let $H$ be a subgroup of finite index in $G$. The Nielson-Schreier method (\cite{lyndon}, pp. 16) computes a finite generating set $Y$ for $H$ and shows that
\[
\dfrac{|Y|-1}{|G:H|}=|X|-1.
\]
As the rank of $H$ is the minimal number of elements needed to generate the
group $H$ and the above equality holds for all generating sets $X$ of $G$,
then
\begin{equation}\label{ed;submultiplicativity of d}
\dfrac{d(H)-1}{|G:H|}\leq d(G)-1.
\end{equation}
\vskip 1mm

Therefore, the rank minus one is a submultiplicative invariant, with respect to finite index subgroups, for finitely generated groups.
\vskip 2mm

In \cite{lackers RG}, M. Lackenby defined the \emph{rank gradient} of a
finitely generated group $G$ as
\[
RG(G)=\underset{H\underset{f}{\leqslant}G }{\text{inf}} \left\{ \dfrac{d(H)-1}{|G:H|} \right\},
\]
where the infimum is taken over all the finite index subgroups of $G$.
The notion was motivated by problems in $3$-dimensional geometry and
has received lots of attention since then.
\vskip 2mm

C.T.C. Wall mentions in \cite{wall} that the rank of a group is an example of a submultiplicative invariant. At the time, however, little was known about the properties of the rank gradient nor had any computations been carried out. M. Lackenby was the first to study the rank gradient in more detail (\cite{lackers RG}), although it was mentioned in \cite{reznikov}.

\vskip 2mm

Having positive rank gradient (that is, $RG(G)>0$) is a strong property. By the Nielson-Schreier index formula $RG(F_n)=n-1$. In \cite{lackers RG}, M. Lackenby proved that if $G$ is a non-trivial free product $A\ast B$, where either $A$ or $B$ is not isomorphic to $C_2$, the cyclic group
of order two, then $RG(G)>0$. On the other hand, $SL_n(\mathbb{Z})$
for $n\geq 3$, ascending $HNN$-extensions, and direct products of
finitely generated infinite residually finite groups all have zero
rank gradient (\cite{lackers RG}, \cite{abert-nikolov}). In some sense, the rank gradient measures how free a group is.
\vskip 2mm

\cref{prop: 1 section 2} says that having positive rank gradient is invariant under finite index
subgroups and finite index supergroups and it shows that it defines a generalised Euler characteristic as mentioned in \cite{wall}.
\vskip 2mm

The rank gradient is related to the important notion of amenability. This notion has many different definitions. Here, we give the following one. A group $G$ is
\emph{amenable} if it has a finitely additive, left invariant,
probability measure. Being amenable is invariant under quotients,
extensions, subgroups and finite direct products. The notion was first introduced by J. von Neumann. Finite groups, finitely generated abelian groups and soluble groups are examples of amenable groups. On the other hand, any group which contains a non-abelian free group is non-amenable. Regarding the rank gradient, M. Lackenby proved in \cite{lackers RG}, that if a finitely presented residually finite group is amenable, then it has zero rank gradient. This result was strengthened by M. Abert and N. Nikolov in \cite{abert-nikolov} to finitely generated residually finite groups. 
\vskip 2mm

Another important property is Property ($\tau$). A finitely generated group has \emph{Property $(\tau)$} if for some (equivalently any) finite generating set $S$ for $G$, the set of Cayley graphs Cay($G/N,S$) form an expander family, where $N$ varies over all finite index normal subgroups of $G$. This property is preserved under quotients, extensions and subgroups of finite index. It is remarkable that this property has been used in many different areas such as graph theory, differential geometry, group theory and representation theory. Moreover, the theory relating to Property ($\tau$) is very rich (see for instance \cite{Lubotzky}).
\vskip 2mm

If $G$ is a finitely generated residually finite group which is amenable and has property ($\tau$), then $G$ is finite (\cite{Lubotzky}). In \cite{lackers RG}, M. Lackenby proved that if a group $G$ is finitely presented, then either it has positive rank gradient, it has Property ($\tau$), or an infinite number of finite index subgroups are HNN extensions or amalgamated free products.

\vskip 2mm

Say $G$ is a finitely presented group given
by a finite presentation $P=\langle X|R \rangle$. The deficiency of $P$ is defined as $|X|-|R|$. The deficiency of $G$ is defined as the supremum over all the possible finite presentations $P$ that $G$ admits.
\vskip 2mm

\begin{note}
Deficiency is well defined as the deficiency of a finite presentation
is bounded above by $\beta_1(G)$, where $\beta_1(G)$, the \emph{first Betti number of $G$}, is the number of copies of $\mathbb{Z}$ which appear in the
abelianisation of $G$. One way of seeing that the deficiency is bounded above by $\beta_1(G)$ is by computing the abelianisation of the group using Smith's normal form. 
\vskip 2mm

Consider a set $X$ of generators for $G$. Send this set under the canonical surjective homomorphism $f$ from $G$ onto its abelianisation. Then $f(X)$ generates the abelianisation of $G$. Therefore, $\beta_1(G)$ is a lower bound for $d(G)$. Hence
\begin{equation}\label{eq; def  beta rank}
def(G)\leq \beta_1(G)\leq d(G).
\end{equation}
\end{note}

\vskip 2mm

Consider $P$ a finite presentation for $G$. Let $H$ be a finite index subgroup of $G$. The Reidemeister-Schreier
rewriting method computes a
finite presentation $Q$ for $H$ and shows that
\[
def(P)-1= \dfrac{def(Q)-1}{|G:H|}.
\]
As the deficiency of $Q$ is less than or equal to the deficiency of $H$, then for all finite presentations $P$ of $G$
\[
def(P)-1= \dfrac{def(Q)-1}{|G:H|}\leq \dfrac{def(H)-1}{|G:H|}.
\]
As the deficiency of $G$ is the supremum over all finite
presentations, then
\begin{equation}\label{eq; def supermultiplicative}
def(G)-1\leq \dfrac{def(H)-1}{|G:H|}.
\end{equation}
%\vskip 1mm

Therefore, the deficiency minus one of a finitely presented group is a supermultiplicative invariant with respect to finite index subgroups.

\vskip 2mm

\begin{rem}\label{rem large fund gps not def>1}
As mentioned before, having deficiency greater than one is a strong property. As large groups contain a non-abelian free group, then they are non-amenable. Moreover, a large group does not have Property ($\tau$).
\vskip 2mm

The deficiency of a finitely presented group $G$ and its rank
gradient are nicely related. By \cref{eq; def  beta rank} and
\cref{eq; def supermultiplicative}
\[
def(G)-1\leq \dfrac{def(H)-1}{|G:H|}\leq \dfrac{d(H)-1}{|G:H|},
\]
where $H$ is a finite index subgroup of $G$. Therefore,
\[
def(G)-1\leq RG(G).
\]
Hence, if the finitely presented group $G$ has deficiency
greater than one, then the group has positive rank gradient.
\end{rem}

\vskip 2mm

Both largeness and having positive rank gradient are properties invariant under finite index supergroups. Therefore, in order to conclude both largeness and positive
rank gradient, it is enough to have a finite index subgroup
with deficiency greater than one. By \cref{eq; def supermultiplicative}, we define the \emph{Deficiency Gradient} as
\[
DG(G)=\underset{H\underset{f}{\leqslant}G }{\text{sup}}
\left\{ \dfrac{def(H)-1}{|G:H|} \right\}.
\]
\vskip 2mm

Note that $DG(G)>0$ if and only if $G$ has a finite index subgroup with deficiency greater than one. Therefore, having positive deficiency gradient implies the group $G$ is both large and has positive rank gradient.
\vskip 2mm

By \cref{prop: 1 section 2}, this invariant is multiplicative with respect to finite index subgroups and hence defines a generalised Euler characteristic.
\vskip 2mm

In \cite{puchta} (alternatively \cite{jack-anitha}), the notion of
$p$-deficiency was introduced. Let $G$ be a finitely generated
group given by a presentation $Q=\langle X|R\rangle$, where $X$ generates
$F_n$, the non-abelian group of rank $n$.  The author of \cite{puchta}
defined $\nu_{p}(r)$ for $r\in R$, to be the largest integer $k$,
such that there is some $s\in F_n$ with $r=s^{p^{k}}$. The $p$-deficiency
of $Q$ is defined in \cite{jack-anitha} as
\[
def_p(Q)=n-\sum_{r\in R}\ p^{-\nu_{p}(r)}.
\]
The definition of $p$-deficiency in \cite{puchta} is
\[
def_p(Q)=n-\sum_{r\in R}\ p^{-\nu_{p}(r)}-1.
\]
For example, let $Q$ be the following presentation
\[
\langle x_1,\ldots, x_n \mid w_1^{p^{a_1}},\ldots,w_m^{p^{m}}\rangle,
\]
where $w_i$ cannot be expressed as a proper $p$-power for all $i$, $1\leq i\leq m$. Then, the $p$-deficiency of $Q$ as defined in \cite{jack-anitha} is
\[
n-\sum_{i=1}^{m}\dfrac{1}{p^{a_i}},
\]
while the $p$-deficiency of $Q$ as defined in \cite{puchta} is
\[
n-\sum_{i=1}^{m}\dfrac{1}{p^{a_i}}-1.
\]
We opt for the first definition, that is, the one in \cite{jack-anitha}.
\vskip 2mm

We will be interested only in presentations $Q$ such that the quantity $\sum_{r\in R}\ p^{-\nu_{p}(r)}$ converges. That is, presentations for which $def_p(Q)$ is finite.
\vskip 2mm

Given $G$ a finitely generated group with a presentation $Q=\langle X|R\rangle$, where $X$ generates
$F_n$ and such that $def_p(Q)$ is finite, then the $p$-deficiency of $G$ is defined as the supremum of the $p$-deficiencies of all presentations of $G$ with finite generating
set.
\vskip 2mm

The authors of \cite{puchta-yiftach}, proved that $p$-deficiency minus one
is supermultiplicative with respect to finite index normal subgroups. Hence,
\[
def_p(G)-1\leq \dfrac{def_p(H)-1}{|G:H|},
\]
where $H$ is a finite index normal subgroup in $G$. Therefore, it makes sense to consider the supremum over all
normal finite index subgroups. Hence we define the
\emph{p-deficiency gradient} to be
\[
DG_p(G)=\underset{H\underset{f}{\trianglelefteq}G }{\text{sup}}
\left\{ \dfrac{def_p(H)-1}{|G:H|}  \right\}.
\]
By \cref{prop: 2 section 2} the quantity $DG_p$ would still be the same if the supremum is taken over finite index subgroups.
\vskip 2mm

Having $p$-deficiency greater than one is also a strong property. For instance, in \cite{jack-anitha} it is shown that a finitely presented group with $p$-deficiency greater than
one is $p$-large (a group is $p$-large if it has a normal 
subgroup of index a power of $p$ which surjects onto a non-abelian free
group).
\vskip 2mm

The deficiency gradient, $p$-deficiency gradient and rank gradient
are nicely related. Consider a finitely presented group $G$ and a finite presentation $P$ for $G$. Using the definitions of deficiency and $p$-deficiency for $P$, the deficiency of $P$ is less than or equal to its $p$-deficiency. Hence
\[
\dfrac{def(H)-1}{|G:H|}\leq \dfrac{def_p(H)-1}{|G:H|},
\]
where $H$ is a finite index normal subgroup of $G$. Therefore, $DG(G)\leq DG_p(G)$.
\vskip 2mm

Both deficiency and $p$-deficiency are supermultiplicative. Hence the inequality $DG(G)\leq DG_p(G)$ is rather straightforward. The rank gradient is however submultiplicative. Therefore, the following proposition comes as somewhat of a surprise.
\vskip 2mm

\begin{prop}\label{DGp less thn or eq RG}
Let $G$ be a finitely generated group. Then $DG_p(G)\leq RG(G)$.
\end{prop}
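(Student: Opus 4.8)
The plan is to isolate one elementary inequality for a single group, namely $def_p(\Gamma)\le d(\Gamma)$, and then let the super/submultiplicativity of the invariants involved carry it through to the gradients. So the argument falls into two parts: a homological bound for one group, and then purely formal bookkeeping with the material of \cref{section 1}.

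First I would prove that $def_p(\Gamma)\le \dim_{\mathbb{F}_p}H_1(\Gamma;\mathbb{F}_p)\le d(\Gamma)$ for every finitely generated group $\Gamma$ admitting a presentation of finite $p$-deficiency. The second inequality is the usual one: a generating set of $\Gamma$ of size $d(\Gamma)$ maps onto a spanning set of $H_1(\Gamma;\mathbb{F}_p)=\Gamma^{\mathrm{ab}}\otimes\mathbb{F}_p$. For the first inequality, take any presentation $Q=\langle X\mid R\rangle$ of $\Gamma$ with $|X|=n$ and $def_p(Q)$ finite; finiteness of $def_p(Q)$ forces $\nu_p(r)\ge1$ for all but finitely many $r\in R$. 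If $\nu_p(r)\ge1$ then $r$ is a $p^{\nu_p(r)}$-th power in $F_n$, so its image in $H_1(F_n;\mathbb{F}_p)\cong\mathbb{F}_p^n$ is $0$; hence the images of the elements of $R$ in $\mathbb{F}_p^n$ span a subspace of dimension at most $\#\{r\in R:\nu_p(r)=0\}$. Since $H_1(\Gamma;\mathbb{F}_p)$ is the quotient of $\mathbb{F}_p^n$ by that subspace,
\[
\dim_{\mathbb{F}_p}H_1(\Gamma;\mathbb{F}_p)\ \ge\ n-\#\{r\in R:\nu_p(r)=0\}\ \ge\ n-\sum_{r\in R}p^{-\nu_p(r)}\ =\ def_p(Q),
\]
the middle step because each $r$ with $\nu_p(r)=0$ contributes $1$ and each $r$ with $\nu_p(r)\ge1$ contributes something nonnegative to the sum. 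Taking the supremum over all finite presentations of $\Gamma$ gives $def_p(\Gamma)\le\dim_{\mathbb{F}_p}H_1(\Gamma;\mathbb{F}_p)\le d(\Gamma)$.

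Second I would run the gradient bookkeeping. By \cref{prop: 2 section 2} the infimum defining $RG(G)$ may be taken over finite index \emph{normal} subgroups, so it suffices to show $\frac{def_p(K)-1}{|G:K|}\le\frac{d(H)-1}{|G:H|}$ for all finite index normal subgroups $K,H$ of $G$. Put $L=K\cap H$, which is again of finite index and normal in $G$, hence normal of finite index in each of $K$ and $H$. Supermultiplicativity of $def_p-1$ with respect to finite index normal subgroups (\cite{puchta-yiftach}) gives $def_p(K)-1\le\frac{def_p(L)-1}{|K:L|}$, i.e. $\frac{def_p(K)-1}{|G:K|}\le\frac{def_p(L)-1}{|G:L|}$; the bound just established gives $def_p(L)\le d(L)$, i.e. $\frac{def_p(L)-1}{|G:L|}\le\frac{d(L)-1}{|G:L|}$; and submultiplicativity of $d-1$ (\cref{ed;submultiplicativity of d}) gives $\frac{d(L)-1}{|H:L|}\le d(H)-1$, i.e. $\frac{d(L)-1}{|G:L|}\le\frac{d(H)-1}{|G:H|}$. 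Chaining the three inequalities and then taking the supremum over $K$ and the infimum over $H$ yields $DG_p(G)\le RG(G)$.

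The only genuinely new content is the inequality $def_p\le d$, and there the one subtlety to watch is that $R$ may be infinite: it is exactly the finiteness of $def_p(Q)$ that confines to a finite set the relators $r$ with $\nu_p(r)=0$, which is what makes the dimension count legitimate. Everything else is formal manipulation of inequalities already set up in \cref{section 1}; the statement looks mildly surprising only because a supermultiplicative quantity, $def_p-1$, ends up controlled by the gradient of a submultiplicative one, and the mechanism is precisely that $def_p\le d$ holds at every finite index normal subgroup and is then pushed in opposite directions by the two gradients along a common refinement $L=K\cap H$.
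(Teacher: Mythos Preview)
Your proof is correct and follows essentially the same approach as the paper: both hinge on the single-group inequality $def_p(\Gamma)\le d(\Gamma)$ (the paper cites it from \cite{puchta} via the $p$-rank $d_p(\Gamma)$, which is exactly your $\dim_{\mathbb{F}_p}H_1(\Gamma;\mathbb{F}_p)$, whereas you supply a direct argument) and then pass to the gradients. The only cosmetic difference in the bookkeeping is that the paper first establishes $def_p(G)-1\le RG(G)$ and then invokes the multiplicativity $RG(H)/|G{:}H|=RG(G)$ from \cref{prop: 1 section 2}, while you achieve the same effect by hand via the common refinement $L=K\cap H$.
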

\begin{proof}
In \cite{puchta}, J.C. Schlage-Puchta proved that given a finitely
generated group $G$, then $def_p(G)\leq d_p(G)\leq d(G)$, where
$d_p(G)$, the $p$-rank of $G$, is defined as the maximum number of
copies of $C_p$, the cyclic group of order $p$, onto which $G$ can
surject.
\vskip 2mm

Let $H$ be a finite index normal subgroup of $G$. By the
supermultiplicativity of $p$-deficiency
\[
def_p(G)-1\leq \dfrac{def_p(H)-1}{|G:H|}\leq \dfrac{d_p(H)-1}{|G:H|}\leq \dfrac{d(H)-1}{|G:H|}.
\]
This holds for all normal finite index subgroups, therefore $def_p(G)-1\leq RG(G)$ and hence
\[
\dfrac{def_p(H)-1}{|G:H|}\leq \dfrac{RG(H)}{|G:H|}=RG(G),
\]
where the last equality follows from \cref{prop: 1 section 2}.
Take the supremum on the left hand side, over all finite
index normal subgroups of $G$, to obtain the result.
\end{proof}
\vskip 2mm

\begin{rem}\label{rem: p def >1 implies RG>0}
From \cref{DGp less thn or eq RG}, if $def_p(G)>1$, then $RG(G)>0$. This may also be deduced from results in \cite{puchta-yiftach}. Also note that having $DG_p(G)>0$ is equivalent to having a finite index normal subgroup with $p$-deficiency greater than one. Hence $DG_p(G)>0$ implies the group $G$ has a finite index normal subgroup $H$ which is $p$-large and has positive rank gradient. As $H$ has finite index in $G$, the latter is large and has positive rank gradient. 
\end{rem}
\vskip 2mm

\begin{cor}\label{cor: Def grad, pdef grad and rank grad}
Let $G$ be a finitely presented group. Then
\[
DG(G)\leq DG_p(G)\leq RG(G).
\]
\end{cor}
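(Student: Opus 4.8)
The plan is to obtain the corollary by chaining two inequalities, each of which is already essentially available. The right-hand inequality $DG_p(G)\leq RG(G)$ is exactly \cref{DGp less thn or eq RG}, so no new work is needed there. For the left-hand inequality $DG(G)\leq DG_p(G)$, I would argue first at the level of a single finite presentation and then pass to suprema, exactly as was done informally in the paragraph preceding \cref{DGp less thn or eq RG}.

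First I would note that for any finite presentation $P=\langle X\mid R\rangle$, comparing the formulas $def(P)=|X|-|R|$ and $def_p(P)=|X|-\sum_{r\in R}p^{-\nu_p(r)}$ term by term gives $def(P)\leq def_p(P)$, since $p^{-\nu_p(r)}\leq 1$ for each $r$; finite presentability guarantees the sum is finite, so $def_p$ is defined on all finite index subgroups of $G$. Taking the supremum over all finite presentations of a fixed finitely presented group $H$ then yields $def(H)\leq def_p(H)$, and hence, for $H$ a finite index subgroup of $G$,
\[
\dfrac{def(H)-1}{|G:H|}\leq \dfrac{def_p(H)-1}{|G:H|}.
\]
Taking the supremum of both sides over all finite index subgroups $H$ of $G$ produces $DG(G)\leq DG_p(G)$; here I invoke \cref{prop: 2 section 2} (in its supermultiplicative form, as in the note following its proof) so that $DG_p(G)$, a priori a supremum over finite index normal subgroups, is also the supremum over all finite index subgroups, and the two gradients are therefore being compared over the same family of subgroups.

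There is no real obstacle: the corollary is a bookkeeping consequence of \cref{DGp less thn or eq RG} together with the elementary estimate $def(P)\leq def_p(P)$. The only point needing a moment's care is ensuring that $DG$ and $DG_p$ are computed over compatible families of subgroups, which is precisely what \cref{prop: 2 section 2} delivers. Combining the two inequalities gives $DG(G)\leq DG_p(G)\leq RG(G)$.
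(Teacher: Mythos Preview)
Your proposal is correct and follows the same approach as the paper: the paper states the corollary without proof, relying on \cref{DGp less thn or eq RG} for the right-hand inequality and on the observation $def(P)\leq def_p(P)$ (made in the paragraph just before \cref{DGp less thn or eq RG}) for the left-hand inequality. You are in fact slightly more careful than the paper in invoking \cref{prop: 2 section 2} to reconcile the a priori different families of subgroups over which $DG$ and $DG_p$ are defined.
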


By Proposition $3.7$ in \cite{lackers covers of 3 orbs}, $d_p(G)-1$ defines a submultiplicative invariant for the class of finitely generated groups, with respect to subnormal subgroups of finite index a power of $p$. However, it does not define a submultiplicative or supermultiplicative invariant with respect to normal finite index subgroups, even if we restrict ourselves to the class of finitely presented groups.
\vskip 2mm

Consider a prime $p$ and the free product $C_{p_1}\ast C_{p_2}$, where $p_1$ and $p_2$ are primes distinct from $p$ and at least one of $p_1$ and $p_2$ is greater than $2$. The abelianisation of $C_{p_1}\ast C_{p_2}$ is $C_{p_1}\times C_{p_2}$, therefore its $p$-rank is zero. However, $C_{p_1}\ast C_{p_2}$ has a non-abelian free group $F$ of finite rank as a finite index subgroup.
\vskip 2mm

Any non-abelian free group has positive $p$-rank for any prime $p$. Therefore,
\[
0<\dfrac{d_p(F)-1}{|C_{p_1}\ast C_{p_2}:F|}.
\]
However, $d_p(C_{p_1}\ast C_{p_2})-1=-1$. Therefore, the $p$-rank does not define a submultiplicative invariant with respect to finite index subgroups.
\vskip 2mm

Let $p$ and $G=C_{p_1}\ast C_{p_2}$ be as above . Consider $\Gamma=G\times C_p$. The $p$-rank of $G$ is zero. Note $G$ is finite index in $\Gamma$. However, the $p$-rank of $\Gamma$ is at least one. Therefore $d_p(\Gamma)-1\geq 0$, but
\[
\dfrac{d_p(G)-1}{|\Gamma:G|}<0.
\]
Therefore, the $p$-rank does not define a supermultiplicative invariant with respect to finite index subgroups.
\vskip 2mm

Even when the $p$-rank does not define a supermultiplicative or a submultiplicative invariant, with respect to finite index subgroups, the way in which it varies on its finite index subgroups has been useful in some cases. In \cite{lackers hom growth of subgps}, M. Lackenby considered collections of finite index subgroups $\{G_i\}$ for a finitely presented group $G$ and said that $\{G_i\}$ has linear growth of mod $p$ homology if
\begin{equation}\label{eq: lackers def with p rank}
\underset{i}{\text{inf}}\left\{\dfrac{d_p(G_i)}{|G:G_i|}\right\}>0.
\end{equation}

M. Lackenby used this condition to give a characterisation for largeness in finitely presented groups (\cite{lackers hom growth of subgps}).
\vskip 2mm

In \cite{puchta-yiftach}, Y. Barnea and J.C. Schlage-Puchta considered the quantities
\[
\alpha^{-}(G)=\text{lim\ inf}\dfrac{d_p(H)}{|G:H|}
\]
and
\[
\alpha^{+}(G)=\text{lim\ sup}\dfrac{d_p(H)}{|G:H|},
\]
where the lim inf and the lim sup are taken over all finite index normal subgroups of $G$.
\vskip 2mm

The authors of \cite{puchta-yiftach} proved that if a finitely generated group $G$ has any of the two quantities above being positive, say $\alpha^{-}(G)$, then $G$ has a normal subgroup $N$ such that $G/N$ is a $p$-group and $\alpha^{-}(G/N)>0$.
\vskip 2mm

We define the $p$-rank gradient as
\[
RG_p(G):= \underset{H\underset{f}{\trianglelefteq}G }{\text{inf}} \left\{ \dfrac{d_p(H)-1}{|G:H|} \right\}.
\]
\begin{prop}\label{lemma pdef>1 implies RG_p>0}
Let $G$ be a finitely generated group. Then $def_p(G)-1\leq RG_p(G)$. In particular, if $def_p(G)>1$, then $RG_p(G)>0$.
\end{prop}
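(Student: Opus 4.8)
The plan is to chain together two facts already recorded in the excerpt. First, $p$-deficiency minus one is supermultiplicative with respect to finite index normal subgroups (the result of Y. Barnea and J.C. Schlage-Puchta in \cite{puchta-yiftach} quoted above), so that for every finite index normal subgroup $H$ of $G$ one has $def_p(G)-1\leq (def_p(H)-1)/|G:H|$. Second, for any finitely generated group $H$ one has $def_p(H)\leq d_p(H)$, which is the inequality of J.C. Schlage-Puchta (\cite{puchta}) already used in the proof of \cref{DGp less thn or eq RG}.

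First I would fix an arbitrary finite index normal subgroup $H$ of $G$. Since $|G:H|$ is a positive integer, the inequality $def_p(H)-1\leq d_p(H)-1$ gives $(def_p(H)-1)/|G:H|\leq (d_p(H)-1)/|G:H|$, and combining this with supermultiplicativity yields $def_p(G)-1\leq (d_p(H)-1)/|G:H|$. Then I would take the infimum of the right-hand side over all finite index normal subgroups $H$ of $G$; the left-hand side does not depend on $H$, so it persists, and by the definition of $RG_p$ we obtain $def_p(G)-1\leq RG_p(G)$.

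For the final assertion: if $def_p(G)>1$, then $def_p(G)-1>0$, whence $RG_p(G)\geq def_p(G)-1>0$.

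There is essentially no obstacle here beyond bookkeeping. The only points to keep in mind are that $def_p(G)$ is finite by our standing assumption that we consider only presentations for which the relevant sum converges, so the chain of inequalities is meaningful, and that $RG_p(G)$ is an infimum over the non-empty family of finite index normal subgroups which, by the bound just established, is bounded below and hence a well-defined element of $\mathbb{R}\cup\{+\infty\}$. The argument is parallel to, but strictly simpler than, the proof of \cref{DGp less thn or eq RG}, where an additional step was required to pass from the rank gradient of finite index subgroups back to the rank gradient of $G$.
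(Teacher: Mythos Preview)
Your proof is correct and follows essentially the same route as the paper: supermultiplicativity of $def_p-1$ from \cite{puchta-yiftach}, the bound $def_p(H)\leq d_p(H)$ from \cite{puchta}, and then the infimum over finite index normal subgroups. The paper's argument is line-for-line the same, displayed as the chained inequality $def_p(G)-1\leq (def_p(H)-1)/|G:H|\leq (d_p(H)-1)/|G:H|$.
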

\begin{proof}
In \cite{puchta-yiftach} it was proved that $p$-deficiency minus one is supermultiplicative with respect to finite index normal subgroups of $G$. That is, if $H$ is a finite index normal subgroup of $G$, then
\[
def_p(G)-1\leq \dfrac{def_p(H)-1}{|G:H|}.
\]
Moreover, the $p$-deficiency of a group is a lower bound for the $p$-rank of a group (\cite{puchta}), hence
\begin{equation}\label{eq: in first lemma}
def_p(G)-1\leq \dfrac{def_p(H)-1}{|G:H|}\leq \dfrac{d_p(H)-1}{|G:H|}.
\end{equation}
The result is obtained by taking the infimum over all the finite index normal subgroups of $G$.
\end{proof}
\vskip 2mm

As the $p$-rank gradient and the rank gradient are defined by using the infimum, and $d_p(G)\leq d(G)$, then $RG_p(G)\leq RG(G)$, where $G$ is a finitely generated group. From \cref{lemma pdef>1 implies RG_p>0} we then have
\[
def_p(G)-1\leq RG_p(G)\leq RG(G).
\]

\vskip 2mm

Now we introduce another notion of deficiency which we call group deficiency. This notion is partly inspired by the following result in \cite{thomas} by R. Thomas.
\vskip 2mm

\begin{thm}\label{thm thomas}$($Only Theorem in $\cite{thomas})$\\
Let $G$ be given by the presentation
\[
Q=\langle x_1,\ldots,x_n \mid w_1^{m_1},\ldots,w^{m_s}_s\rangle,
\]
and let $\varphi: F_n\longrightarrow G$ be the canonical map from the non-abelian
free group of rank $n$ onto $G$. If the order of $\varphi(w_i)$  in $G$ is $m_i$,
for all $i$, $1\leq i\leq s$, and
\[
n-\sum_{i=1}^{s}\dfrac{1}{m_i}\geq 1,
\]
then the group $G$ is infinite.
\end{thm}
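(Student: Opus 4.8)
The plan is to argue by contradiction, using the relation module of the given presentation together with the Nielsen--Schreier rank formula. So suppose $G$ is finite, say $|G| = N$, and write $F = F_n$ and $R = \ker\varphi$; I will deduce $n - \sum_{i=1}^{s} 1/m_i < 1$, contradicting the hypothesis.

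Since $R$ has index $N$ in $F$, the Nielsen--Schreier theorem shows $R$ is free, and the index--rank formula (equivalently $\chi(R) = N\,\chi(F)$) gives $\operatorname{rank}(R) = 1 + N(n-1)$. Hence the relation module $\overline{R} := R/[R,R]$ is free abelian of rank $1 + N(n-1)$. Next I would invoke the standard fact that $\overline{R}$, regarded as a module over $\mathbb{Z}G$ via the conjugation action of $F$ on $R$ (which descends to $G = F/R$, since $R$ acts trivially on $R/[R,R]$), is generated as a $\mathbb{Z}G$--module by the classes $v_1,\dots,v_s$ of the relators $w_1^{m_1},\dots,w_s^{m_s}$; this is immediate from the fact that $R$ is generated as a group by the conjugates $f\,w_i^{m_i}\,f^{-1}$. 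Thus $\overline{R} = M_1 + \dots + M_s$ with $M_i = \mathbb{Z}G\cdot v_i$.

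The key step --- and the only place the hypothesis on the orders enters --- is the estimate $\operatorname{rank}_{\mathbb{Z}} M_i \le N/m_i$. Because $w_i$ commutes with $w_i^{m_i}$ in $F$, the element $\varphi(w_i)$ fixes $v_i$ under the conjugation action; and since $\langle\varphi(w_i)\rangle$ has order exactly $m_i$, the vector $g\cdot v_i$ depends only on the coset $g\langle\varphi(w_i)\rangle$, so $M_i$ is the $\mathbb{Z}$--span of at most $[G:\langle\varphi(w_i)\rangle] = N/m_i$ vectors. Adding these bounds and using $\overline{R} = \sum_i M_i$ gives
\[
1 + N(n-1) \;=\; \operatorname{rank}_{\mathbb{Z}}\overline{R} \;\le\; \sum_{i=1}^{s}\frac{N}{m_i},
\]
and dividing through by $N$ yields $n - \sum_{i=1}^{s} 1/m_i \le 1 - 1/N < 1$, the desired contradiction.

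I do not expect a genuine obstacle: the proof is really an Euler-characteristic count in disguise, and the only points requiring care are the (standard) assertions that the relation module is $\mathbb{Z}G$--generated by the relators and that $R$ has rank $1 + N(n-1)$. If one prefers to avoid the relation module, essentially the same inequality can be read as a comparison between $\chi_{\mathbb{Q}}(G) = 1/N$ and the contribution $1 - n + \sum_i 1/m_i$ coming from the generators and from the cyclic subgroups $\langle\varphi(w_i)\rangle$ appearing as cell stabilisers in a suitable contractible $G$--complex built from the Cayley graph.
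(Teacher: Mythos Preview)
Your argument is correct and is essentially the same as the paper's route to this statement. The paper does not reprove Thomas's theorem directly; instead it observes (in the Remark following Corollary~3.6) that it is the special case $H=\{e\}$ of the deficiency/abelianisation bound coming from Theorem~3.2 and Lemma~3.1. Unwinding that special case gives exactly your computation: the preimage of $\{e\}$ in $F_n$ is $R$, Nielsen--Schreier gives $\operatorname{rank} R = 1+N(n-1)$, and the conjugation count in Lemma~3.1 (your observation that $\varphi(w_i)$ fixes $v_i$) shows $R$ is normally generated by $\sum_i N/m_i$ conjugates, whence $1+N(n-1)\le \sum_i N/m_i$. The only cosmetic difference is that you phrase the comparison via the relation module $R/[R,R]$ as a $\mathbb{Z}G$-module, whereas the paper phrases it as a Reidemeister--Schreier presentation of the trivial subgroup; the underlying inequality and the mechanism producing it are identical.
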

This theorem considers the quantity $n-\sum_{i=1}^{s}1/m_i$, which is different to the quantities considered in the deficiency and the $p$-deficiency. This leads to the following definition.
\vskip 2mm

\begin{defn}
Let $G$ be a finitely presented group given by a presentation $Q=\langle x_1,\ldots,x_n \mid w_1^{m_1},\ldots,w^{m_s}_s\rangle$. Let $\varphi: F_n\longrightarrow G$ be the canonical map from the non-abelian free group of rank $n$ onto $G$, induced by the presentation $Q$. Suppose the order of $\varphi(w_i)$ in $G$ is $k_i$, for all $i$, $1\leq i\leq s$. Then we define the \emph{group deficiency} of $Q$ to be
\[
n-\sum_{i=1}^{s}1/k_i.
\]
We define the group deficiency of $G$ as the supremum of all the group deficiencies taken over all possible finite presentations of $G$.
\end{defn}
\vskip 2mm

Note that the group deficiency compares to the deficiency in the following way
\[
def(Q)\leq gdef(Q).
\]
\vskip 2mm

It is not clear whether the group deficiency defines a submultiplicative or supermultiplicative invariant. We can nevertheless define the \emph{Group Deficiency Gradient} as
\[
GDG(G)=\underset{H\underset{f}{\leqslant}G }{\text{sup}}
\left\{ \dfrac{gdef(H)-1}{|G:H|} \right\}.
\]
The previous definition implies that 
\begin{equation}\label{eq: def grad and gp def grad}
DG(G)\leq GDG(G).
\end{equation}
\vskip 2mm

Let $G$ be given by the presentation
\[
Q=\langle x_1,\ldots,x_n \mid u_i^{m_i} \rangle,\ \ i\in I,
\]
where $I$ is a countable set. Suppose $k_i$ is the order of the image of $u_i$ in $G$, for all $i\in I$. If $\sum_{i\in I}1/k_i$ converges, then it makes sense to define the group deficiency of $Q$ as $n-\sum_{i\in I}1/k_i$. This helps extend the notion beyond the class of finitely presented groups.
\vskip 2mm

Another invariant which is useful to keep track of on finite index
subgroups is the first Betti number. In \cite{eckman}, the
first $L^2$-Betti number $\beta^{(2)}_1(G)$ is defined for a finitely generated group $G$. For definition and main properties we refer to \cite{eckman}. For the moment, it suffices to note that the first $L^2$-Betti number satisfies
\begin{equation}\label{eq; supermult betti}
\beta^{(2)}_1(G)=\dfrac{\beta^{(2)}_1(H)}{|G:H|}
\end{equation}
for all finite index subgroups $H$ in $G$. Therefore, it defines a generalised Euler characteristic. 
\vskip 2mm

When restricted to finitely presented residually finite groups, L\"{u}ck's approximation Theorem (\cite{luck approx thm}) helps us reduce the definition of the first $L^2$-Betti number to the following: given a finitely presented residually finite group $G$ and a descending sequence of
finite index normal subgroups $\{N_i\}_{i\in\mathbb{N}}$, such that
$\underset{i\in\mathbb{N}}{\bigcap}N_i=\{e\}$, then
\[
\beta^{(2)}_1(G)=\underset{i\rightarrow\infty}{lim}\dfrac{\beta_1(N_i)}{|G:N_i|}.
\]
\vskip 2mm

In \cite{luck}, the finitely generated case was addressed.
It turns out from \cite{luck} that if the group $G$ is finitely generated and residually finite,
then
\begin{equation}\label{eq;approx L2 betti for fg gps}
\beta^{(2)}_1(G)\geq\underset{i\rightarrow\infty}{lim}sup\dfrac{\beta_1(N_i)}{|G:N_i|}.
\end{equation}
Moreover, the inequality cannot be improved as examples
where the strict inequality holds are given in \cite{luck}.
\vskip 2mm

The authors of \cite{kar niblo} used the group deficiency of a finitely generated group to give the following result
\begin{thm}$(\cite{kar niblo}$, Theorem 4$)$\\
Suppose a group G is given by the presentation
\[
Q=\langle x_1,\ldots,x_n \mid u_i^{m_i} \rangle,\ \ i\in I,
\]
where $I$ is a countable set and the order of $u_i$ in $G$ is $m_i$ for all $i\in I$. Suppose $\sum_{i\in I}1/m_i$ converges. If $G$ is infinite, then 
\[
\beta_1^{2}(G)\geq n-1-\sum_{i\in I}\dfrac{1}{m_i}.
\]
\end{thm}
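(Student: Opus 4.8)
The plan is to extract the inequality from the beginning of a free $\mathbb{Z}G$-resolution of $\mathbb{Z}$ associated with $Q$ via Fox calculus, rather than from any approximation by finite-index subgroups (which is unavailable here: $G$ need not be residually finite). Write $\bar u_i=\varphi(u_i)$. First I would take the presentation complex $X$ of $Q$ and note that the cellular chain complex of its universal cover begins
\[
\bigoplus_{i\in I}\mathbb{Z}G\ \xrightarrow{\ \partial_2\ }\ \mathbb{Z}G^{\,n}\ \xrightarrow{\ \partial_1\ }\ \mathbb{Z}G\ \xrightarrow{\ \varepsilon\ }\ \mathbb{Z}\to 0,
\]
with $\partial_1(e_j)=\bar x_j-1$ and $\partial_2(e_i)=\bigl(\partial(u_i^{m_i})/\partial x_j\bigr)_{j}$. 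Since $\widetilde X$ is simply connected this is exact at $\mathbb{Z}G^{\,n}$, so it is the start of a free resolution; as cells in degrees $\ge 3$ do not affect homology in degree one, I would obtain
\[
\beta^{(2)}_1(G)=\dim_{\mathcal N(G)}\ker\partial_1-\dim_{\mathcal N(G)}\overline{\operatorname{im}\partial_2},
\]
where $\partial_1,\partial_2$ now denote the bounded operators induced on the $\ell^2$-completions. This sidesteps any need for $X$ to be of finite type or aspherical.

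Next I would evaluate the two terms. The first is easy: $G$ is infinite, so $\beta^{(2)}_0(G)=|G|^{-1}=0$, hence $\overline{\operatorname{im}\partial_1}=\ell^2(G)$, which has von Neumann dimension $1$, giving $\dim_{\mathcal N(G)}\ker\partial_1=n-1$. The second term is the heart of the matter, and the only place the order hypothesis enters. I would set $N_i=1+\bar u_i+\dots+\bar u_i^{\,m_i-1}\in\mathbb{Z}G$ and use the Fox-calculus identity $\partial(u_i^{m_i})/\partial x_j=N_i\cdot(\partial u_i/\partial x_j)$, which shows the $i$-th block of $\partial_2$ factors through multiplication by $N_i$. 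Hence the closure of $\operatorname{im}\partial_2$ is the closed span of the images of these blocks, and by subadditivity and continuity of $\dim_{\mathcal N(G)}$ along increasing unions of submodules,
\[
\dim_{\mathcal N(G)}\overline{\operatorname{im}\partial_2}\ \le\ \sum_{i\in I}\dim_{\mathcal N(G)}\overline{\operatorname{im}(\cdot N_i)}.
\]
Then I would observe that $N_i$ is self-adjoint with $N_i^2=m_iN_i$, so $m_i^{-1}N_i$ is a projection in $\mathbb{C}G$, whence $\dim_{\mathcal N(G)}\overline{\operatorname{im}(\cdot N_i)}=\tau(m_i^{-1}N_i)=\tfrac{1}{m_i}\bigl|\{\,0\le k<m_i:\bar u_i^{\,k}=1\,\}\bigr|=\tfrac{1}{m_i}$, the last equality holding precisely because $\bar u_i$ has order $m_i$ in $G$. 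Combining, $\beta^{(2)}_1(G)\ge(n-1)-\sum_{i\in I}1/m_i$.

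The step I expect to be the crux is the projection computation for $N_i$: it is what makes a proper-power relator $u_i^{m_i}$ contribute only $1/m_i$ to the defect rather than $1$, and it is exactly where the hypothesis cannot be weakened from "$\operatorname{ord}_G(\bar u_i)=m_i$" to "$\operatorname{ord}_G(\bar u_i)\mid m_i$" — for a proper divisor $d$ one would get $\tau(m_i^{-1}N_i)=1/d>1/m_i$. The convergence of $\sum_i 1/m_i$ will be needed only so that the final bound is not vacuous (for every finite $I_0\subseteq I$ the inequality $\dim_{\mathcal N(G)}\overline{\operatorname{im}\partial_2}\le\sum_{i\in I_0}1/m_i$ holds unconditionally), and the infinitude of $G$ will be used only through $\beta^{(2)}_0(G)=0$. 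As a consistency check, the right-hand side $(n-1)-\sum_i 1/m_i$ is exactly $gdef(Q)-1$, so the statement reads $\beta^{(2)}_1(G)\ge gdef(Q)-1$, in line with the philosophy of the earlier sections.
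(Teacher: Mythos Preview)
The paper does not supply its own proof of this statement: it is quoted verbatim from \cite{kar niblo} (their Theorem~4) and then invoked as a black box to derive the subsequent corollary bounding $GDG(G)$ by $\beta_1^{(2)}(G)$. So there is no in-paper argument to compare yours against.

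That said, your argument is essentially the one given by Kar and Niblo. The two key ingredients---the Fox-calculus factorisation $\partial(u_i^{m_i})/\partial x_j = N_i\cdot \partial u_i/\partial x_j$ with $N_i=1+\bar u_i+\cdots+\bar u_i^{\,m_i-1}$, and the observation that $m_i^{-1}N_i$ is a self-adjoint idempotent of trace $1/m_i$ precisely when $\bar u_i$ has order $m_i$---are exactly theirs, as is the use of subadditivity of von Neumann dimension to sum the contributions. Your remark that the order hypothesis cannot be relaxed to a divisibility hypothesis is also correct and worth keeping.

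One small slip in the closing parenthetical: you write that for every finite $I_0\subseteq I$ one has $\dim_{\mathcal N(G)}\overline{\operatorname{im}\partial_2}\le\sum_{i\in I_0}1/m_i$. This is the wrong way round, since the full image contains each partial one. What you intend is that $\dim_{\mathcal N(G)}\overline{\operatorname{im}(\partial_2|_{I_0})}\le\sum_{i\in I_0}1/m_i$ for each finite $I_0$, and then continuity of $\dim_{\mathcal N(G)}$ along the increasing union of closed submodules yields $\dim_{\mathcal N(G)}\overline{\operatorname{im}\partial_2}\le\sum_{i\in I}1/m_i$. This is purely cosmetic and does not affect the argument.
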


\vskip 2mm
\begin{cor}\label{cor: GDG and l2 betti}
Let $G$ be a finitely presented group $G$. Then
\[
GDG(G)\leq\beta_1^{2}(G).
\]
\end{cor}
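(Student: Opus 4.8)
The plan is to deduce \cref{cor: GDG and l2 betti} from the single-group inequality $gdef(H)-1\le\beta_1^{(2)}(H)$, valid for every finitely presented group $H$, together with the multiplicativity of the first $L^2$-Betti number under passage to finite index subgroups, \cref{eq; supermult betti}. Granting that inequality, fix a finite index subgroup $H$ of $G$ (which is finitely presented since $G$ is), divide by $|G:H|$, and use $\beta_1^{(2)}(H)/|G:H|=\beta_1^{(2)}(G)$ to get $(gdef(H)-1)/|G:H|\le\beta_1^{(2)}(G)$; taking the supremum over all finite index subgroups $H$ then gives $GDG(G)\le\beta_1^{(2)}(G)$.

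So the content is the inequality $gdef(H)-1\le\beta_1^{(2)}(H)$, which I would prove presentation by presentation. Let $Q=\langle x_1,\dots,x_n\mid w_1^{m_1},\dots,w_s^{m_s}\rangle$ be any finite presentation of $H$, $\varphi\colon F_n\to H$ the induced map, and $k_i$ the order of $\varphi(w_i)$ in $H$, so that $gdef(Q)=n-\sum_i 1/k_i$. To be able to invoke the theorem of Kar and Niblo (\cite{kar niblo}, Theorem~4), which requires a presentation whose relator exponents are exactly the orders of the corresponding elements, I would first replace $Q$ by $Q'=\langle x_1,\dots,x_n\mid w_1^{k_1},\dots,w_s^{k_s}\rangle$. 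This presents the same group $H$: since $\varphi(w_i)^{m_i}=e$ we have $k_i\mid m_i$, so each $w_i^{m_i}$ lies in the normal closure of $\{w_j^{k_j}\}$; conversely $\varphi(w_i)^{k_i}=e$ forces $w_i^{k_i}$ into $\ker\varphi$, the normal closure of $\{w_j^{m_j}\}$; hence the two normal closures coincide. Moreover $gdef(Q')=gdef(Q)$, because the group deficiency of a presentation depends only on the generators and on the orders of the images of the $w_i$, not on the exponents written in the relators.

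Now I would split into two cases. If $H$ is infinite, the Kar--Niblo theorem applied to $Q'$ --- whose relator exponents $k_i$ are the orders of $\varphi(w_i)$ in $H$, and for which $\sum_i 1/k_i$ is a finite sum, hence convergent --- gives $\beta_1^{(2)}(H)\ge n-1-\sum_i 1/k_i=gdef(Q')-1=gdef(Q)-1$. If $H$ is finite, then $\beta_1^{(2)}(H)=0$, while the contrapositive of Thomas's theorem (\cref{thm thomas}), applied to $Q'$, gives $n-\sum_i 1/k_i<1$, i.e. $gdef(Q)-1<0=\beta_1^{(2)}(H)$. In either case $gdef(Q)-1\le\beta_1^{(2)}(H)$; since this holds for every finite presentation $Q$ of $H$, taking the supremum yields $gdef(H)-1\le\beta_1^{(2)}(H)$ (in particular $gdef(H)$ is finite), which is what was needed.

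The only step that is not purely formal is the passage from $Q$ to $Q'$: the group deficiency is a supremum over all presentations, but the Kar--Niblo bound is available only when the relator exponents coincide with the actual orders of the corresponding group elements, so one must check that ``correcting'' the exponents to the true orders does not change the group being presented. Everything else is bookkeeping with the definitions and with the multiplicativity of $\beta_1^{(2)}$.
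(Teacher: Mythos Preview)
Your argument is correct and follows the same route as the paper: deduce $gdef(H)-1\le\beta_1^{(2)}(H)$ from the Kar--Niblo theorem for each finite index subgroup $H$, then use multiplicativity of $\beta_1^{(2)}$ and take the supremum. The paper's proof simply asserts the inequality $\beta_1^{(2)}(H)\ge gdef(H)-1$ ``by the previous theorem'' without further comment; you have been more careful on two points that the paper glosses over, namely the replacement of an arbitrary presentation $Q$ by one $Q'$ whose relator exponents equal the actual orders (so that the hypotheses of Kar--Niblo are literally met), and the separate treatment of finite $H$ via the contrapositive of \cref{thm thomas} (since Kar--Niblo requires the group to be infinite). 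Both additions are correct and fill genuine details that the paper leaves implicit.
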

\begin{proof}
Let $H$ be a finite index subgroup of $G$. As $G$ is finitely presented, then $H$ is finitely presented. Therefore, by the previous theorem we have
\[
\beta_1^{2}(H)\geq gdef(H)-1.
\]
Since the first $L_2$-Betti number is multiplicative with respect to finite index subgroups
\[
\beta_1^{2}(G)=\dfrac{\beta_1^{2}(H)}{|G:H|}\geq \dfrac{gdef(H)-1}{|G:H|}.
\]
The result is obtained by taking the supremum, on the right hand side, over all finite index subgroups of $G$.

\end{proof}
\vskip 2mm

\begin{cor}\label{cor: DG GDG and L2 betti}
Let $G$ be a finitely presented group. Then
\[
DG(G)\leq GDG(G)\leq\beta_1^{2}(G).
\]
\end{cor}
\begin{proof}
Immediate from \cref{eq: def grad and gp def grad} and \cref{cor: GDG and l2 betti}.
\end{proof}

As mentioned before, the rank gradient, the deficiency gradient and the first $L^{2}$-Betti number define generalised Euler characteristics. The authors of \cite{puchta-yiftach} considered
\[
\chi_p(G)=-\underset{|G:H|<\infty}{sup}\dfrac{def_p(H)-1}{|G:H|}
\]
and proved the following result.

\begin{prop}$(\cite{puchta-yiftach}$, Proposition 17$)$.\\
Let $G$ be a finitely generated group.
\begin{enumerate}
\item Then 
\[
\chi_p(G)=-\underset{\underset{H\triangleleft G}{|G:H|<\infty}}{sup}\dfrac{def_p(H)-1}{|G:H|}.
\]
\item If $H$ is a finite index subgroup of $G$, then $\chi_p(H)=|G:H|\chi_p(G)$.
\item If $G$ is virtually free, then $\chi_p(G)$ is the ordinary
Euler characteristic of $G$.
\item If $G$ is a Fuchsian group, then $\chi_p(G)$ is the ordinary Euler characteristic of $G$.
\end{enumerate}
\end{prop}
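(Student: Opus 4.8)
The plan is to read off all four parts from the formalism of Section~\ref{section 1}, applied to the invariant $I = def_p - 1$, which is supermultiplicative with respect to finite index normal subgroups by \cite{puchta-yiftach} (recalled above); with this choice $\tilde I(G) = DG_p(G) = -\chi_p(G)$.

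Part~(1) is the supermultiplicative version of \cref{prop: 2 section 2} applied to $I$ (the Note after that proposition licenses reversing the inequalities): given a finite index subgroup $H$ of $G$ one passes to the core $K = Core_G(H)$, which is a finite index normal subgroup of $G$ and satisfies $K \trianglelefteq_f H$, and supermultiplicativity of $I$ for $K \trianglelefteq_f H$ gives $\frac{def_p(H)-1}{|G:H|} \le \frac{def_p(K)-1}{|G:K|}$; hence the supremum over finite index subgroups defining $-\chi_p(G)$ equals the supremum over finite index normal subgroups. Part~(2) is the supermultiplicative version of \cref{prop: 1 section 2} applied to $I$: it gives $\tilde I(G) = \tilde I(H)/|G:H|$ for every finite index subgroup $H$, i.e. $DG_p(H) = |G:H|\, DG_p(G)$, which upon negating is exactly $\chi_p(H) = |G:H|\,\chi_p(G)$. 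So $\chi_p$ is a generalised Euler characteristic, and in particular its value on $G$ is determined by its value on any one finite index subgroup.

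For parts~(3) and~(4) I would first use part~(2) to reduce to a torsion-free finite index subgroup $\Gamma \le G$: for a virtually free $G$ take the free subgroup, and for a Fuchsian $G$ use Selberg's lemma together with the fact that a torsion-free finitely generated Fuchsian group is free (if the quotient orbifold is non-compact, or in the infinite-covolume case) or a closed orientable surface group $\Sigma_g$, $g\ge 2$ (in the cocompact case, after passing to the orientation double cover if necessary). I then claim $\chi_p(\Gamma) = \chi(\Gamma)$, with $\chi(\Gamma) = 1-n$ for $\Gamma = F_n$ and $2-2g$ for $\Gamma = \Sigma_g$; granting this, part~(2) and the multiplicativity of the ordinary (Bass--Wall rational) Euler characteristic under finite index give $\chi_p(G) = \chi_p(\Gamma)/|G:\Gamma| = \chi(\Gamma)/|G:\Gamma| = \chi(G)$. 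To prove the claim I would squeeze $DG_p(\Gamma) = -\chi_p(\Gamma)$ between $DG(\Gamma)$ and $RG(\Gamma)$ using \cref{cor: Def grad, pdef grad and rank grad}: evaluating the deficiency gradient at $\Gamma$ itself gives $DG_p(\Gamma) \ge DG(\Gamma) \ge def(\Gamma) - 1$, which is $n-1 = -\chi(F_n)$ for the free presentation and $(2g-1)-1 = -\chi(\Sigma_g)$ for the standard one-relator surface presentation; conversely $DG_p(\Gamma) \le RG(\Gamma)$, and $RG(\Gamma) = \inf_H \frac{d(H)-1}{|\Gamma:H|}$ is bounded above by the value on a single finite index subgroup, which by the Nielsen--Schreier index formula is exactly $n-1$ for every finite index subgroup of $F_n$, and for $\Sigma_g$ is $\frac{2g'-1}{m} = \frac1m + (2g-2)$ on the index-$m$ subgroup $\Sigma_{g'}$ (using $2-2g' = m(2-2g)$ and $d(\Sigma_{g'}) = 2g'$), which tends to $2g-2$. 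Hence $DG_p(\Gamma) = -\chi(\Gamma)$, i.e. $\chi_p(\Gamma) = \chi(\Gamma)$. Non-cocompact and infinite-covolume finitely generated Fuchsian groups are already virtually free, so for those part~(4) is a special case of part~(3).

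The squeeze is short once \cref{prop: 1 section 2} and \cref{cor: Def grad, pdef grad and rank grad} are available, so the real work lies in the degenerate cases left out above: finite groups, $\mathbb{Z}$, the infinite dihedral group, and elementary Fuchsian groups with $\chi \ge 0$, where the gradients collapse. For a finite group $G$ one instead evaluates the supremum defining $\chi_p(G)$ at the trivial subgroup, obtaining $-1/|G|$, and must check that no other (necessarily finite) finite index subgroup $H$ beats it, i.e. that $def_p(H) \le 1 - 1/|H|$ for every finite $H$; this refinement of Schlage-Puchta's bound $def_p \le d_p$ for finite groups is the genuinely delicate point, and it is also what forces the matching value $\chi(G) = 1/|G|$. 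I would also make explicit at the outset that in~(3) and~(4) "the ordinary Euler characteristic" is the Bass--Wall rational Euler characteristic --- the unique finite-index-multiplicative extension of the usual one, defined since these groups are virtually of finite homological type --- which is precisely what makes the reduction via part~(2) legitimate.
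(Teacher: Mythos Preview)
The paper does not prove this proposition: it is quoted from \cite{puchta-yiftach} and stated without proof. The only indication the paper gives is the sentence immediately following it, which says that parts~(1) and~(2) ``have already been settled in \cref{prop: 2 section 2} and \cref{prop: 1 section 2}''. Your treatment of (1) and (2) is exactly this, so on those parts you match the paper's (implicit) argument.

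For parts~(3) and~(4) there is nothing in the paper to compare against; the paper instead proves the analogous statements for $DG$, $RG$, and $\beta_1^{(2)}$ separately (\cref{Euler char and DG} and the two propositions after it). Your squeeze via \cref{cor: Def grad, pdef grad and rank grad}, reducing first to a torsion-free finite index subgroup and then sandwiching $DG_p$ between $DG$ and $RG$, is a genuinely different route: it recovers the cited result \emph{from} the paper's own computations for $DG$ and $RG$ rather than repeating a direct $p$-deficiency calculation. This is economical and correct in the non-degenerate cases ($F_n$, $n\ge 2$, and $\Sigma_g$, $g\ge 2$), since the paper explicitly computes $DG=RG=-\chi$ there. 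The degenerate cases you flag (finite groups, $\mathbb{Z}$, $D_\infty$, Euclidean Fuchsian groups) are indeed where the squeeze collapses and where one must appeal to the sharper bound $def_p(H)\le 1-1/|H|$ for finite $H$; that bound is not in this paper and would have to come from \cite{puchta-yiftach} or be proved separately, so you are right to isolate it as the residual content.
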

\vskip 2mm

The following propositions show that the deficiency gradient, rank gradient and first $L^{2}$-Betti number, satisfy the same properties as $\chi_p(G)$. Note that the first two properties have already been settled in \cref{prop: 2 section 2} and \cref{prop: 1 section 2}, hence only the last two are proved.
\vskip 2mm

\begin{prop}\label{Euler char and DG}
Let $G$ be a finitely presented group. Then if $G$ is virtually free or a Fuchsian group, then $-DG(G)$ is the ordinary Euler characteristic of $G$.
\end{prop}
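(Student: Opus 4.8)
The plan is to exploit that $DG$ is a generalised Euler characteristic (\cref{prop: 1 section 2}) and that the ordinary rational Euler characteristic $\chi$ is likewise multiplicative under passage to finite-index subgroups, so that the identity $-DG(G)=\chi(G)$ need only be checked on a single conveniently chosen finite-index subgroup of $G$. If $G$ is virtually free it contains, by definition, a finite-index free subgroup $F_n$ of some rank $n\ge 0$. If $G$ is a finitely generated Fuchsian group, then by Selberg's lemma it has a torsion-free finite-index subgroup $K$; such a $K$ is free when $G$ is non-cocompact and is a closed orientable surface group when $G$ is cocompact, and in the latter case $\chi(K)=|G:K|\,\chi(G)<0$ forces the genus of $K$ to be at least $2$. (If one allows orientation-reversing isometries, $K$ may be a non-orientable closed surface group, but that in turn has a finite-index orientable surface subgroup, so nothing changes.) Thus it suffices to prove $-DG(K)=\chi(K)$ when $K$ is free of finite rank or a closed orientable surface group of genus $\ge 2$; the equality then propagates to $G$ via $DG(K)=|G:K|\,DG(G)$ and $\chi(K)=|G:K|\,\chi(G)$.

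For $K=F_n$ free of rank $n\ge 0$: by the Nielsen--Schreier index formula every subgroup $H\le F_n$ of index $m$ is free of rank $1+m(n-1)$, and a free group of rank $k$ has deficiency exactly $k$ (the $k$-generator, relator-free presentation attains $k$, while $def\le\beta_1=k$). Hence $\big(def(H)-1\big)/|F_n:H| = n-1$ for every finite-index $H$, so $DG(F_n)=n-1=-\chi(F_n)$. In particular this settles every finite group as well, each being virtually $F_0$.

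For $K=\Sigma_h$, a closed orientable surface group of genus $h\ge 2$, I would sandwich $DG$ between an explicit lower bound and $\beta_1^{(2)}$:
\[
def(P)-1 \ \le\ DG(\Sigma_h)\ \le\ GDG(\Sigma_h)\ \le\ \beta_1^{(2)}(\Sigma_h),
\]
where the last two inequalities are \cref{cor: DG GDG and L2 betti} and $P=\langle a_1,b_1,\dots,a_h,b_h \mid \prod_{i=1}^{h}[a_i,b_i]\rangle$ is the standard presentation, for which $def(P)-1=2h-2=-\chi(\Sigma_h)$. Since $\Sigma_h$ is hyperbolic, $\beta_1^{(2)}(\Sigma_h)=-\chi(\Sigma_h)=2h-2$, so the two outer terms agree and $DG(\Sigma_h)=-\chi(\Sigma_h)$. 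Combined with the free case and the first-paragraph reduction, this gives $-DG(G)=\chi(G)$ in both parts of the proposition.

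The deficiency bookkeeping above is routine; I expect the only real content to lie in the structural input, namely Selberg's lemma together with the classification of torsion-free finitely generated Fuchsian groups as free or surface groups. If one prefers to avoid the first $L^2$-Betti number, the surface case can instead be done by hand: the bound $def\le\beta_1-\beta_2$ applied to $\Sigma_{h'}$ gives $def(\Sigma_{h'})=2h'-1$, and since every finite-index subgroup of $\Sigma_h$ is again an orientable surface group with Euler characteristic multiplying, the ratio $\big(def(H)-1\big)/|\Sigma_h:H|$ is the constant $-\chi(\Sigma_h)$; but the argument via \cref{cor: DG GDG and L2 betti} is shorter given what has already been proved.
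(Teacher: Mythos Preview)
Your reduction step and the free-group case are essentially identical to the paper's argument, though you are more explicit about invoking Selberg's lemma and the classification of torsion-free Fuchsian groups, which the paper simply asserts. Where you genuinely diverge is the surface case: the paper computes directly, using that every finite-index subgroup of $\pi_1(S_g)$ is again a surface group $\pi_1(S_h)$ with $h=|G:H|(g-1)+1$ and deficiency exactly $2h-1$, so that $(def(H)-1)/|G:H|$ is the constant $2g-2$ and the supremum is immediate. Your sandwich $def(P)-1\le DG(\Sigma_h)\le \beta_1^{(2)}(\Sigma_h)=2h-2$ via \cref{cor: DG GDG and L2 betti} is correct and shorter given the machinery already in place, but it imports the value of $\beta_1^{(2)}(\Sigma_h)$ as an external fact, whereas the paper's computation is self-contained and incidentally establishes that $def(\Sigma_h)=2h-1$ exactly. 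You yourself sketch the paper's direct route in your final paragraph, so you clearly see both; either is acceptable, and the only trade-off is elementarity versus brevity.
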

\begin{proof}
Denote the ordinary Euler characteristic of $G$ by $\chi(G)$. The Euler characteristic and the deficiency gradient are multiplicative. Therefore, when $G$ is virtually free, it suffices to prove the statement for non-abelian free groups. Also, as every Fuchsian group has a surface group as a finite index subgroup, it suffices to prove the statement for surface groups when $G$ is a Fuchsian group.
\vskip 2mm

Suppose $G$ is free of rank $m$. Then the deficiency of $G$ is $m$. Let $H$ be a finite index subgroup of $G$. By the Nielson-Schreier index formula $H$ is a free group of rank $|G:H|(m-1)+1$ and hence has deficiency $|G:H|(m-1)+1$. Therefore,
\[
-DG(G)=-\underset{H\underset{f}{\leqslant}G}{\text{sup}} \left\{ \dfrac{def(H)-1}{|G:H|} \right\}=-\underset{H\underset{f}{\leqslant}G}{\text{sup}} \left\{ \dfrac{1+|G:H|(m-1)-1}{|G:H|} \right\}
\]
\[
=1-m=\chi(G).
\]

Let $G$ be $\pi_1(S_g)$, where $S_g$ is the orientable surface of genus $g>1$. The deficiency of $\pi_1(S_g)$ is $2g-1$. If $H$ is a finite index subgroup of $G$, then $H\cong \pi_1(S_h)$, where $h=|G:H|(g-1)+1$. For $l>1$, the Euler characteristic of $S_l$ is $(2-2l)$. Therefore,
\[
-DG(G)=-\underset{H\underset{f}{\leqslant}G}{\text{sup}} \left\{ \dfrac{def(H)-1}{|G:H|} \right\}
\]
\[
=-\underset{H\underset{f}{\leqslant}G}{\text{sup}} \left\{ \dfrac{\Big(2\big(|G:H|(g-1)+1\big)-1\Big)-1}{|G:H|} \right\}=2-2g.
\]
\end{proof}
\vskip 2mm

\begin{prop}
Let $G$ be a finitely generated group. If $G$ is virtually free or a Fuchsian group, then $-RG(G)$ is the ordinary
Euler characteristic of $G$.
\end{prop}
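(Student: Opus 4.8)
The plan is to follow the same strategy as in the proof of \cref{Euler char and DG}, exploiting that both the ordinary Euler characteristic $\chi$ and the rank gradient are multiplicative with respect to finite index subgroups. For $RG$ this is \cref{prop: 1 section 2}; for $\chi$ it is classical for the (virtually torsion-free, virtually FP) groups in question. Hence $-RG(G)$ and $\chi(G)$ are both generalised Euler characteristics, so it suffices to verify the equality $-RG(G)=\chi(G)$ on one convenient finite index subgroup of $G$. Every Fuchsian group is either virtually free or virtually a closed surface group, and passing to the orientation double cover if necessary we may take that surface to be orientable; so it is enough to treat non-abelian free groups and the groups $\pi_1(S_g)$ with $g>1$.

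First I would handle the free case. If $G=F_m$ is free of rank $m\geq 2$, then $\chi(G)=1-m$, and by the Nielsen--Schreier index formula every finite index subgroup $H$ is free of rank exactly $|G:H|(m-1)+1$, so
\[
\frac{d(H)-1}{|G:H|}=\frac{|G:H|(m-1)+1-1}{|G:H|}=m-1
\]
for every such $H$. Taking the infimum gives $RG(G)=m-1$, hence $-RG(G)=1-m=\chi(G)$.

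Next the surface case. Let $G=\pi_1(S_g)$ with $g>1$; then $H_1(G)\cong\mathbb{Z}^{2g}$ together with the standard $2g$-generator one-relator presentation give $d(G)=2g$, and $\chi(G)=2-2g$. A finite index subgroup $H$ is again a closed orientable surface group, $H\cong\pi_1(S_h)$ with $h=|G:H|(g-1)+1$ (by multiplicativity of $\chi$), so $d(H)=2h$ and
\[
\frac{d(H)-1}{|G:H|}=\frac{2\big(|G:H|(g-1)+1\big)-1}{|G:H|}=2(g-1)+\frac{1}{|G:H|}.
\]
Since $G$ is residually finite and infinite it has finite index subgroups of arbitrarily large index, so the infimum of the right hand side over all finite index $H$ equals $2(g-1)$. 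Therefore $RG(G)=2g-2$ and $-RG(G)=2-2g=\chi(G)$.

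The only genuinely non-formal inputs are the exact rank computations $d(F_m)=m$ and $d(\pi_1(S_g))=2g$ (the latter being the real content, since one must know the standard presentation cannot be shortened, which follows from $\beta_1=2g$), together with the fact that finite index subgroups of surface groups are again surface groups of the predicted genus. I expect no serious obstacle: everything reduces, via multiplicativity, to these standard facts exactly as in \cref{Euler char and DG}, the only new wrinkle being that in the surface case the infimum is a genuine limit rather than being attained by every subgroup.
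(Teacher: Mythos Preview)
Your proof is correct and follows essentially the same approach as the paper: reduce via multiplicativity to free groups and orientable surface groups, then compute $(d(H)-1)/|G:H|$ explicitly using Nielsen--Schreier and the classification of finite covers of $S_g$. You are in fact slightly more careful than the paper in noting that the surface-group infimum is a genuine limit requiring subgroups of arbitrarily large index (hence residual finiteness), a point the paper leaves implicit.
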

\begin{proof}
If $G$ is virtually free, then the proof in \cref{Euler char and DG} works equally well. Just note that if $G$ is free of rank $m$ then the deficiency of $G$ is $m$.
\vskip 2mm

Let $G$ be a Fuchsian group. Suppose, as in \cref{Euler char and DG}, that $G\cong \pi_1(S_g)$, $g>1$. The rank of $\pi_1(S_g)$ is $2g$. If $H$ is a finite index subgroup of $G$, then $\pi_1(S_h)\cong H$, where $h=|G:H|(g-1)+1$. Therefore, the rank of $H$ is $2h$. Hence,

\[
-RG(G)=-\underset{H\underset{f}{\leqslant}G}{\text{inf}} \left\{ \dfrac{d(H)-1}{|G:H|} \right\}=-\underset{H\underset{f}{\leqslant}G}{\text{inf}} \left\{ \dfrac{2\big(|G:H|(g-1)+1\big)-1}{|G:H|} \right\}
\]
\[
=-\underset{H\underset{f}{\leqslant}G}{\text{inf}} \left\{ \dfrac{2+2|G:H|(g-1)-1}{|G:H|} \right\}=2-2g.
\]
\end{proof}
\vskip 2mm

\begin{prop}
Let $G$ be a finitely generated group. If $G$ is virtually free or a Fuchsian group, then $-\beta_1^{(2)}(G)$ is the ordinary Euler characteristic of $G$.
\end{prop}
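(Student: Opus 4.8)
The plan is to mirror the two preceding propositions, reducing the statement to non-abelian free groups and to surface groups by multiplicativity, and then to compute $\beta_1^{(2)}$ explicitly in each case. Since $\beta_1^{(2)}$ satisfies $\beta_1^{(2)}(H)=|G:H|\,\beta_1^{(2)}(G)$ for finite index subgroups by \cref{eq; supermult betti}, and the ordinary Euler characteristic is likewise multiplicative, it suffices to treat $G$ free of finite rank $m$ (the virtually free case) and $G\cong\pi_1(S_g)$ with $g>1$ (the Fuchsian case, using that every Fuchsian group has a surface subgroup of finite index).

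First I would handle the free case. If $G=F_m$ with $m\geq 2$, then $G$ is residually finite and finitely presented, so by L\"{u}ck's approximation theorem $\beta_1^{(2)}(F_m)=\lim_{i\to\infty}\beta_1(N_i)/|G:N_i|$ along any descending chain of finite index normal subgroups with trivial intersection. Each $N_i$ is free of rank $|G:N_i|(m-1)+1$ by the Nielsen-Schreier index formula, hence $\beta_1(N_i)=|G:N_i|(m-1)+1$, so the limit is $m-1$. Therefore $-\beta_1^{(2)}(F_m)=1-m=\chi(F_m)$. Alternatively, one can cite the standard fact that $\beta_1^{(2)}(F_m)=m-1$ directly from \cite{eckman}, avoiding the approximation argument entirely.

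Next I would handle the surface case. For $G\cong\pi_1(S_g)$ with $g>1$, $G$ is residually finite and finitely presented, and every finite index subgroup $H$ is again a surface group $\pi_1(S_h)$ with $h=|G:H|(g-1)+1$, so $\beta_1(H)=2h=2|G:H|(g-1)+2$. Applying L\"{u}ck's approximation theorem along a chain $\{N_i\}$ as above gives
\[
\beta_1^{(2)}(G)=\lim_{i\to\infty}\frac{2|G:N_i|(g-1)+2}{|G:N_i|}=2(g-1)=2g-2.
\]
Hence $-\beta_1^{(2)}(G)=2-2g=\chi(S_g)=\chi(G)$. (Again one may instead quote the well-known value $\beta_1^{(2)}(\pi_1(S_g))=2g-2$.) Combining the two cases with multiplicativity of both invariants finishes the proof.

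The only real subtlety — rather than a genuine obstacle — is making sure the reduction steps are justified: that $\beta_1^{(2)}$ is genuinely multiplicative over finite index subgroups (already recorded in \cref{eq; supermult betti}), that a virtually free group has a free subgroup of finite index and a Fuchsian group has a surface subgroup of finite index (both standard), and that the groups involved are residually finite so L\"{u}ck approximation applies; free groups and surface groups are both residually finite, so this is immediate. If one prefers to sidestep L\"{u}ck approximation, the computations $\beta_1^{(2)}(F_m)=m-1$ and $\beta_1^{(2)}(\pi_1(S_g))=2g-2$ are classical and can simply be cited, in which case the proof is essentially identical in structure to \cref{Euler char and DG}.
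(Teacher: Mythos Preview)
Your proposal is correct and follows essentially the same approach as the paper: reduce by multiplicativity of $\beta_1^{(2)}$ and $\chi$ to free groups and surface groups, then apply L\"{u}ck approximation along a descending chain with trivial intersection, using Nielsen--Schreier for $F_m$ and the surface-subgroup formula $h=|G:H|(g-1)+1$ for $\pi_1(S_g)$. The paper's proof is slightly terser but structurally identical.
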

\begin{proof}
Once again, it suffices to show the statement for non-abelian free groups and surface groups. Let $F_k$ be the non-abelian free group of rank $k$. Consider a descending sequence of finite index normal subgroups in $F_k$ that intersects in the identity. This can be done as $F_k$ is residually finite. Note that $N_i$ is a non-abelian free group, therefore, $\beta_1(N_i)=d(N_i)=|F_k:N_i|(k-1)+1$. Hence,
\[
-\beta^{(2)}_1(F_k)=-\underset{i\rightarrow\infty}{lim}\dfrac{\beta_1(N_i)}{|F_k:N_i|}=-\underset{i\rightarrow\infty}{lim}\dfrac{|F_k:N_i|(k-1)+1}{|F_k:N_i|}%=\underset{i\rightarrow\infty}{lim}\dfrac{d(N_i)}{|F_k:N_i|}
=1-k=\chi(F_k).
\]

Suppose $G\cong \pi_1(S_g)$ with $g>1$. Consider a descending sequence $\{G_i\}_{i\in \mathbb{N}}$ of finite index normal subgroups in $G$ that intersects in the identity. Such series exist as $G$ is residually finite. Note that since $G$ is a surface group, then $G_i$ is a surface group for all $i\in \mathbb{N}$. 
\vskip 2mm

The Euler characteristic is multiplicative with respect to finite index subgroups. Therefore, $2g_i-2=|G:G_i|(2g-2)$ where $g_i=|G:G_{i}|(g-1)+1$. Finally, note that $\beta_1(G_i)=2g_i$. Therefore,
\[
-\underset{i\rightarrow\infty}{lim}\dfrac{\beta_1(G_i)}{|G:G_i|}=-\underset{i\rightarrow\infty}{lim}\dfrac{2g_i}{|G:G_i|}=-\underset{i\rightarrow\infty}{lim}\dfrac{2|G:G_i|(g-1)+2}{|G:G_i|}=2-2g.
\]
\end{proof}

\section{Residual deficiency}

This section introduces an invariant for finitely presented
groups that we call \emph{residual deficiency}. From now on, unless otherwise stated, any group considered is finitely presented. If $G$ is a finitely presented group and $Q=\langle X|R\rangle$ is a finite presentation for $G$, where $X$ freely generates $F_n$, the
non-abelian free group of rank $n$, denote by $\varphi$ the canonical homomorphism from $F_n$ to $G$. Denote by $R_G$ the finite residual of $G$, that is, the intersection of all finite index subgroups of $G$, and call
$G/R_G$ the \emph{residual quotient} of $G$.

\vskip 2mm

\begin{defn}\label{def: residual def}
Let $G$ be a finitely presented group and let $Q=\langle X|R\rangle$ be a
finite presentation for $G$. Consider the elements of
$R$ expressed in the following way $R=\{r_1=u_1^{m_1},\ldots,r_s=u_s^{m_s}\}$, where the elements
$u_i\in F_n$ cannot be expressed as proper powers of other words
in $F_n$. Let $\overline{R}_G$ be the inverse image of $R_G$ under $\varphi$ and let $k_i$ be the order of $u_i$ in $F_n/\overline{R}_G$($\cong G/R_G$). %Denote such order by $o(u_i,F_n/\overline{R}_G)$. 
We define the \emph{residual deficiency
of the finite presentation $Q$} to be
\[
rdef(Q)=n-\sum_{i=1}^{s}\dfrac{1}{k_i}.
\]
Note that $k_i$ is also equal to the order of $\varphi(u_i)$ in $G/R_G$.
\vskip 2mm

Consider all possible finite presentations for $G$ and their corresponding residual deficiencies. We define the \emph{residual deficiency} of $G$ as the supremum of these residual deficiencies,
\[
rdef(G)=\underset{\langle X|R\rangle\cong G}{\text{sup}} \left\{ rdef(Q) \right\}.
\]
\end{defn}
\vskip 2mm

\begin{rem}\label{rem: rdef and def}
The first thing to observe is that if $Q$ is a finite presentation for a group $G$, then $def(Q)\leq rdef(Q)$. Hence, $def(G)\leq rdef(G)$. However, if the residual quotient of $G$ is torsion free, then $def(G)=rdef(G)$. This is because the only way in which the residual deficiency may be greater than the deficiency is if there is a presentation in which one of the relators is expressed as a proper power of another element which lies outside of $\overline{R}_G$. This element then defines a non-trivial torsion element in the residual quotient. 
\end{rem}
\vskip 2mm

The following lemma (or variations of it) is a well known result that appears in many papers, for instance Lemma $2.3$ of \cite{olshanskii-osin}.
\vskip 2mm

\begin{lem}\label{lemma}
Let $G$ be a finitely generated group, $H$ a normal subgroup of
finite index in $G$, and $g$ an element of $H$. Denote the centraliser of $g$ in $G$ by $C_G(g)$. Consider $K$ a subgroup of $C_G(g)$. Let $T$ be a left transversal for $HK$ in $G$ and let $Z$ be the set of conjugates of $g$ by elements in
$T$. Then, the normal subgroup in $G$ generated by $g$, is the
normal subgroup in $H$ generated by $Z$.
\end{lem}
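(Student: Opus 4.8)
The plan is to show the two normal subgroups coincide by a double inclusion, where only one direction requires work. Write $N = \langle\!\langle g \rangle\!\rangle_G$ for the normal closure of $g$ in $G$, and $M = \langle\!\langle Z \rangle\!\rangle_H$ for the normal closure in $H$ of the set $Z = \{\, t^{-1} g t : t \in T \,\}$. Since $H \trianglelefteq G$ and $g \in H$, we have $N \leq H$, and since every element of $Z$ is a $G$-conjugate of $g$, we get $Z \subseteq N$, hence $M \leq N$ immediately (as $N$ is in particular a normal subgroup of $H$ containing $Z$). So the content is the reverse inclusion $N \leq M$.

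For that direction, first observe that $M \trianglelefteq H$ by construction, but I would like more: I claim $M \trianglelefteq G$. Granting this, $M$ is a normal subgroup of $G$ containing $g$ (take $t = e \in T$, or rather the coset representative of the trivial coset, so $g \in Z \subseteq M$), hence $N \leq M$ and we are done. So the whole proof reduces to the claim that $M$ is normal in $G$, which is where the hypotheses on $K$ and the transversal enter.

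To prove $M \trianglelefteq G$ it suffices to show that for every $x \in G$ and every $t \in T$, the conjugate $x^{-1}(t^{-1} g t)x$ lies in $M$; since $M \trianglelefteq H$, it is enough to show $x^{-1}(t^{-1}g t)x$ is $H$-conjugate to some element of $Z$, i.e. that the conjugate $(tx)^{-1} g (tx)$ lies in the $H$-orbit (under conjugation) of $Z$. Here is the key point: $T$ is a left transversal for $HK$ in $G$, so we may write $tx = h k t'$ with $h \in H$, $k \in K$, $t' \in T$ (using $G = HK T$, equivalently $G = \bigsqcup_{t' \in T} HK t'$; the ordering $h k t'$ is arranged by absorbing the $HK$-part into a single coset and then picking its transversal representative). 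Then, because $K \leq C_G(g)$, conjugation by $k$ fixes $g$, so
\[
(tx)^{-1} g (tx) = (h k t')^{-1} g (h k t') = (t')^{-1} k^{-1} h^{-1} g h k t' = (t')^{-1} k^{-1} (h^{-1} g h) k t'.
\]
Now $h^{-1} g h \in H$ (as $g, h \in H$), but it need not equal $g$; however $h^{-1} g h = g \cdot [g,h]$ lies in $N \cap H = N$, and more usefully $h^{-1} g h \in M$ already — wait, that is what we are trying to prove. Let me reorganize: instead, conjugate in the other order. Write $xt^{-1}$... the cleanest route is: since we only need $(tx)^{-1} g(tx) \in M$ and $M \trianglelefteq H$, and $h \in H$, it suffices that $(t')^{-1} k^{-1} g k t' \in M$ after pulling the $h$-conjugation out; but the $h$ sits in the middle. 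The fix is to note $k^{-1}(h^{-1} g h)k = h^{-1}(k^{-1} g k)h \cdot (\text{correction})$ — and since $k$ centralizes $g$ this is just $h^{-1} g h$ when... The honest key step is: $(tx)^{-1} g (tx) = (h k t')^{-1} g (h k t')$ and since $k$ centralizes $g$ and normality of $M$ in $H$ lets us strip the $h$, we reduce to $(t')^{-1} g t' \in Z \subseteq M$. The main obstacle, then, is precisely the bookkeeping of the order of the factors $h, k, t'$: one must choose the factorization $G = \bigsqcup_{t'} HK t'$ so that the $H$-part ends up on the outside (where $M \trianglelefteq H$ can absorb it) and the $K$-part adjacent to $g$ (where centrality kills it), leaving only the transversal element, whose conjugate of $g$ is by definition in $Z$. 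I expect this ordering argument to be the only subtle point; everything else is formal.
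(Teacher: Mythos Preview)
Your approach is essentially the paper's: show every $G$-conjugate of $g$ lies in $M$ by factoring an arbitrary element of $G$ through the transversal, then using centrality of $K$ to kill the $K$-factor and normality of $H$ in $G$ to push the $H$-factor to the outside. The detour through ``$M \trianglelefteq G$'' is harmless --- since $M \leq N$ and $N \trianglelefteq G$, the claim $M \trianglelefteq G$ is equivalent to $M = N$, and your proposed verification of it reduces (as you note) to exactly the statement that every $G$-conjugate of $g$ is in $M$.

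The only real issue is the one you flag yourself --- the ordering --- and it is not merely bookkeeping: your chosen convention $Z = \{t^{-1}gt\}$ does not match the side of the transversal. Since $T$ is a \emph{left} transversal of $HK$, the decomposition you actually have is $G = THK$, not $G = HKT$. With your factorization $tx = hkt'$ you get $t'^{-1}k^{-1}h^{-1}ghkt'$, where $h$ sits between $k$ and $g$ and centrality cannot act; with the correct factorization $tx = t'hk$ you get $k^{-1}h^{-1}(t'^{-1}gt')hk$, where now $k$ is on the \emph{outside} and $M \trianglelefteq H$ does not help you strip it. The paper resolves this by taking the other conjugation convention, $Z = \{tgt^{-1}\}$, and computing $sgs^{-1}$: writing $s = thk$ gives
\[
sgs^{-1} = th(kgk^{-1})h^{-1}t^{-1} = thgh^{-1}t^{-1} = (tht^{-1})(tgt^{-1})(tht^{-1})^{-1},
\]
an $H$-conjugate of $tgt^{-1} \in Z$ in one line. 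So your diagnosis of what must happen (``$K$-part adjacent to $g$, $H$-part on the outside'') is exactly right, but achieving it forces the convention $Z = \{tgt^{-1}\}$ once $T$ is a left transversal; with that adjustment your argument becomes the paper's.
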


\begin{proof}
Suppose $HK/H$ has order $n$. Note that $H\trianglelefteq HK\leqslant G$. Therefore, if $H$ has index $j$ in $G$, then $HK$ has index $j/n$ in $G$. %as we have from the hypothesis that $g$ has index $n$ in
%$G/H$.

\vskip 2mm

The proof centres around showing that if $T=\{t_{1},\ldots,t_{j/n} \}$ is a left transversal for $HK$
in $G$, then $\langle\langle
g\rangle\rangle^{G}=\langle\langle t_{l} g
t_{l}^{-1}\rangle\rangle^{H}$, where $1\leq l\leq j/n$. For this it suffices to show that $s g s^{-1}$ is in $\langle\langle t_l g
t_l^{-1}\rangle\rangle^{H}$, for all $s$ in $G$.

\vskip 2mm

Note that $s$ can be written as $t_{l}hk$ for some $t_{l}$ in $T$,
$h$ in $H$ and $k$ in $K$, where $K$ is in the centraliser of $g$ in $G$. Hence,
\[
s g s^{-1}=t_lhk g k^{-1}h^{-1}t_l^{-1}=t_l h g h^{-1}t_{l}^{-1}
=h't_l g t_{l}^{-1}h'^{-1}
\]
where $h'=t_{l}ht_{l}^{-1}$ as $H$ is normal in $G$. Thus $s g
s^{-1}\in \langle\langle t_{l} g t_{l}^{-1}\rangle\rangle^H$.
\end{proof}
\vskip 2mm

The following Theorem is central in this paper. Similar forms of this result may be found in the literature. For instance in \cite{Edjvet-balanced} and in \cite{gen traingle groups}.
\vskip 2mm

\begin{thm}\label{deficiency for finite index subgroups}
Let the group $G$ be given by the presentation
\[
P=\langle x_1,\ldots,x_n \mid u_1^{m_1},\ldots,u_r^{m_r}\rangle,
\]
where $n, m_i\geq 1$ for $i=1,\ldots,r$. Let $k_i$, for $1\leq i\leq r$, be as in \cref{def: residual def}, the order of $\varphi(u_i)$ in $G/R_G$. Then, there are finite index normal subgroups $H$ in $G$, such that the order of $\varphi(u_i)$ in $G/H$ is $k_i$, for all $i$, $1\leq i\leq r$. Moreover, the deficiency of every such $H$ is bounded below by
\[
1+|G:H|(rdef(P)-1).
\]
\end{thm}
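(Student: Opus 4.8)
The plan is to establish the two assertions in turn. For the existence of a suitable $H$, I would pass to the residual quotient $G/R_G$, which is residually finite since $R_G$ is the intersection of all finite index subgroups of $G$. Because $u_i^{m_i}=1$ in $G$ with $m_i\geq 1$, the element $\varphi(u_i)$ has finite order in $G$ dividing $m_i$, so its order $k_i$ in $G/R_G$ is finite. If $k_i>1$, residual finiteness of $G/R_G$ lets me pick a finite index normal subgroup $\bar H_i\trianglelefteq G/R_G$ not containing the images of $\varphi(u_i),\dots,\varphi(u_i)^{k_i-1}$ (these being non-trivial in $G/R_G$ by choice of $k_i$), so that the image of $\varphi(u_i)$ in $(G/R_G)/\bar H_i$ still has order $k_i$; if $k_i=1$ take $\bar H_i=G/R_G$. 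Put $\bar H=\bigcap_i\bar H_i$ and let $H\trianglelefteq G$ be the preimage of $\bar H$ under $G\to G/R_G$. Then $G/H\cong(G/R_G)/\bar H$, and the order of $\varphi(u_i)$ there divides $k_i$ (it is a quotient of $G/R_G$) while also being a multiple of $k_i$ (it surjects onto $(G/R_G)/\bar H_i$), hence equals $k_i$ for every $i$.

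For the deficiency bound, fix any finite index normal subgroup $H\trianglelefteq G$, say of index $N$, in which $\varphi(u_i)$ has order $k_i$ for all $i$. Set $\hat H=\varphi^{-1}(H)$; since $H\trianglelefteq G$ this is a normal subgroup of index $N$ in $F_n$, hence by Nielsen--Schreier free of rank $1+N(n-1)$, and since $R:=\ker\varphi=\langle\langle u_1^{m_1},\dots,u_r^{m_r}\rangle\rangle^{F_n}$ is contained in $\hat H$ we have $H\cong\hat H/R$. The key step is to apply \cref{lemma} inside $F_n$ with the roles $G\rightsquigarrow F_n$, $H\rightsquigarrow\hat H$, $g\rightsquigarrow u_i^{m_i}$, and $K\rightsquigarrow\langle u_i\rangle$; the hypotheses hold because $u_i^{m_i}\in\hat H$ (it maps to $1$ in $H$), because $\hat H\trianglelefteq F_n$ has finite index, and because $\langle u_i\rangle$ centralises $u_i^{m_i}$. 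Since $\varphi(u_i)$ has order $k_i$ in $G/H$ one has $u_i^{\,j}\in\hat H$ exactly when $k_i\mid j$, so $\langle u_i\rangle\cap\hat H=\langle u_i^{k_i}\rangle$ and $\hat H\langle u_i\rangle$ has index $N/k_i$ in $F_n$; \cref{lemma} then gives $\langle\langle u_i^{m_i}\rangle\rangle^{F_n}=\langle\langle\{t u_i^{m_i}t^{-1}:t\in T_i\}\rangle\rangle^{\hat H}$, where $T_i$ is a left transversal of $\hat H\langle u_i\rangle$ in $F_n$, a set of only $N/k_i$ conjugates. Taking the union over $i=1,\dots,r$ shows $R$ is the normal closure in $\hat H$ of a set $S$ with $|S|=\sum_{i=1}^r N/k_i$.

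Writing the elements of $S$ in a free basis of $\hat H$ then produces a finite presentation of $H$ with $1+N(n-1)$ generators and $\sum_i N/k_i$ relators, of deficiency
\[
1+N(n-1)-\sum_{i=1}^r\frac{N}{k_i}=1+N\Bigl(\bigl(n-\sum_{i=1}^r\tfrac{1}{k_i}\bigr)-1\Bigr)=1+|G:H|\,(rdef(P)-1).
\]
Since $def(H)$ is the supremum of the deficiencies of all finite presentations of $H$, the stated lower bound follows. The points that need care are the index computation $[F_n:\hat H\langle u_i\rangle]=N/k_i$ — this is precisely where the hypothesis that $\varphi(u_i)$ has order exactly $k_i$ in $G/H$ enters — and the verification of the hypotheses of \cref{lemma}. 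I expect the one genuinely non-obvious idea to be applying \cref{lemma} with the cyclic group $\langle u_i\rangle$ in the centraliser slot: this is what reduces the Reidemeister--Schreier relator count from the naive $Nr$ down to $\sum_i N/k_i$, and so turns $def(P)-1$ into $rdef(P)-1$ in the bound.
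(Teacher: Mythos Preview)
Your proposal is correct and follows essentially the same approach as the paper: residual finiteness of $G/R_G$ to produce $H$, then Reidemeister--Schreier for the generators of $\hat H$ together with \cref{lemma} applied with $K=\langle u_i\rangle$ to cut the relator count down to $\sum_i N/k_i$. The only cosmetic difference is that you work in $G/R_G$ while the paper works in $F_n/\overline{R}_G$, and you are slightly more explicit about why the order in $G/H$ is exactly $k_i$ and why $[F_n:\hat H\langle u_i\rangle]=N/k_i$.
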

\begin{proof}As $P$ is a presentation for $G$, there is a surjective
homomorphism $\varphi$ from $F_n$, the non-abelian free group of rank
$n$, onto $G$, where the kernel $N$ is the normal subgroup of
$F_n$ generated by $u_1^{m_1},\ldots,u_r^{m_r}$.

\vskip 2mm

Denote by $\overline{R}_G$ the inverse image of $R_G$ under
$\varphi$. As $F_n/\overline{R}_G\cong G/R_G$, then $F_n/\overline{R}_G$ is residually finite. Therefore, for each word $u_i^{m_i}$, there is
a finite index normal subgroup $\overline{H_i}$ in $F_n$, such that
$o(u_i,F_n/\overline{R}_G)=o(u_i,F_n/\overline{H_i})$. Let $\overline{H}$ be the intersection of all the $\overline{H_i}$. Then, as there are only finite number of relators in $P$, the subgroup $\overline{H}$ has finite index in $F_n/\overline{R}_G$ and $o(u_i,F_n/\overline{R}_G)=o(u_i,F_n/\overline{H})$ for all $i$, $1\leq i\leq r$. Note that every finite index normal subgroup in $G$ contained in $H$ also satisfies this condition.

\vskip 2mm

Denote by $H$ the normal subgroup in $G$ that corresponds to $\overline{H}$ under $\varphi$. Note that $|G:H|=|F_n:\overline{H}|$. Compute a presentation for $H$ using the Reidemeister-Schreier rewriting process (pp. 103, \cite{lyndon}). This gives a presentation with $|G:H|(n-1)+1$ generators which freely generate $\overline{H}$. To compute the relators note that the subgroup $N$ is generated as a normal subgroup of $\overline{H}$ by elements of the type $su_i^{m_i}s^{-1}$, where $s\in T$ and $T$ is a transversal of $\overline{H}$ in $F_n$. Since $\overline{H}\trianglelefteq \overline{H}\langle u_i\rangle\leqslant F_n$, then every $s$ in $T$ can be written as $thu_i^l$, with $t\in T_i$, where $T_i$ a left transversal of $\overline{H}\langle u_i\rangle$ in $F_n$, $h\in \overline{H}$, and $0\leq l< o(u_i,F_n/\overline{H})$. Moreover, the subgroup in $F_n$ generated by $u_i$ is a subgroup of $C_{F_n}(u_i)$. Therefore, by \cref{lemma}, each element $u_i^{m_i}$ only needs to be conjugated by $|G:H|/o(u_i,F_n/\overline{H})$ elements. Hence, $H$ admits a presentation with deficiency
\[
|G:H|(n-1)+1-\sum_{i=1}^{r}\dfrac{|G:H|}{o(u_i,F_n/\overline{H})}=1+|G:H|(rdef(P)-1).
\]
\end{proof}

\begin{cor}\label{cor positive res def}
Let $G$ be a finitely presented group. Then $rdef(G)-1\leq DG(G)$.
In particular, if $rdef(G)>1$, then $G$ has a finite index subgroup with deficiency greater than one.
\end{cor}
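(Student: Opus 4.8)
The plan is to obtain this as a direct consequence of \cref{deficiency for finite index subgroups}, so the proof is essentially bookkeeping with suprema. First I would fix an arbitrary finite presentation $P=\langle x_1,\ldots,x_n\mid u_1^{m_1},\ldots,u_r^{m_r}\rangle$ of $G$, written so that each $u_i$ is not a proper power, as required in \cref{def: residual def}. By \cref{deficiency for finite index subgroups} there is a finite index normal subgroup $H$ of $G$ whose deficiency satisfies
\[
def(H)\geq 1+|G:H|\big(rdef(P)-1\big),
\]
and rearranging this gives $\dfrac{def(H)-1}{|G:H|}\geq rdef(P)-1$.

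Next I would feed this into the definition of the deficiency gradient. Since $H$ is in particular a finite index subgroup of $G$, the quotient $\dfrac{def(H)-1}{|G:H|}$ is one of the terms in the supremum defining $DG(G)$, so $DG(G)\geq rdef(P)-1$. As $P$ was an arbitrary finite presentation of $G$, I would then take the supremum over all finite presentations of $G$ and use $rdef(G)=\sup_P rdef(P)$ to conclude $DG(G)\geq rdef(G)-1$, which is the first assertion.

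For the ``in particular'' statement, suppose $rdef(G)>1$. Then, since $rdef(G)$ is a supremum, there is some finite presentation $P$ of $G$ with $rdef(P)>1$. Applying \cref{deficiency for finite index subgroups} to this $P$ produces a finite index subgroup $H$ with $def(H)\geq 1+|G:H|\big(rdef(P)-1\big)>1$, using $|G:H|\geq 1$ and $rdef(P)-1>0$; hence $G$ has a finite index subgroup of deficiency greater than one.

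I do not expect any real obstacle here: all of the substance lives in \cref{deficiency for finite index subgroups}. The only points to handle carefully are that $DG(G)$ is defined via \emph{all} finite index subgroups whereas \cref{deficiency for finite index subgroups} supplies normal ones (this only helps, as the supremum is taken over a larger family), and that the passage to the supremum over presentations should be carried out once, at the very end, rather than earlier.
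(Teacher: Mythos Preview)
Your proof is correct and follows essentially the same route as the paper: apply \cref{deficiency for finite index subgroups} to an arbitrary finite presentation $P$, bound $DG(G)$ below by $rdef(P)-1$, then take the supremum over $P$. The only cosmetic difference is in the ``in particular'' clause, where the paper notes that $DG(G)>0$ is equivalent to having a finite index subgroup of deficiency greater than one, whereas you re-invoke \cref{deficiency for finite index subgroups} directly on a presentation with $rdef(P)>1$; both are fine.
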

\begin{proof}
By \cref{deficiency for finite index subgroups}
\[
rdef(P)-1\leq \dfrac{def(H)-1}{|G:H|}\leq DG(G).
\]
Taking the supremum over all finite presentations $P$ of $G$ yields
\[
rdef(G)-1\leq DG(G).
\]
If $rdef(G)>1$, then $DG(G)>0$, which is equivalent to having a finite index subgroup with deficiency greater than one.
\end{proof}

\begin{rem}
From \cref{eq: def grad and gp def grad} $DG(G)\leq GDG(G)$, therefore,
\[
rdef(G)-1\leq DG(G)\leq GDG(G).
\]
\end{rem}
\vskip 2mm

\begin{ex}\label{ex: resdef-1 < DG}
This example shows that the equality $rdef(G)-1\leq DG(G)$ is not a strict equality in general. Consider the Thompson group $T$ given by the following presentation
\begin{equation}\label{eq: thomson pres}
\langle a,b\mid [ab^{-1},a^{-1}ba],[ab^{-1},a^{-2}ba^2]\rangle.
\end{equation}
One of the main properties of this group is that its commutator subgroup is equal to its residual subgroup. Such groups are known as residually abelian. For this and other properties of the Thompson group we refer to \cite{thomson gp notes}.
\vskip 2mm

As the relations in \cref{eq: thomson pres} are in the commutator, the abelianisation of $T$ is isomorphic to $\mathbb{Z}\times\mathbb{Z}$. Since the commutator subgroup of $T$ is the same as its residual subgroup, then the commutator $[a,b]$ generates $\overline{R}_T$ as a normal subgroup of $F_2$.
\vskip 2mm

Consider $G:=T\ast \mathbb{Z}$. Denote the normal closure of $[a,b]$ in $F_3=\langle a,b,c \rangle$ by $N$. Since $\overline{R}_G$ is normal in $F_3$ and $[a,b]\in\overline{R}_T\trianglelefteq\overline{R}_G$, then $\overline{R}_T\trianglelefteq N\trianglelefteq \overline{R}_G$. A presentation for $G/N$ is given by
\[
\langle a,b,c\mid [a,b]\rangle.
\]
This defines a group isomorphic to $(\mathbb{Z}\times\mathbb{Z})\ast\mathbb{Z}$. Since $N\trianglelefteq\overline{R}_G$ and $G/N\cong (\mathbb{Z}\times\mathbb{Z})\ast\mathbb{Z}$ is residually finite, then $N=\overline{R}_G$. As $G/R_G$ is isomorphic to $(\mathbb{Z}\times\mathbb{Z})\ast\mathbb{Z}$, which is torsion free, then $def(G)=rdef(G)$ by \cref{rem: rdef and def}. In \cite{button lerf}, J. Button showed that $def(G)=1$. Therefore, $rdef(G)=1$.

\vskip 2mm

Finally, the Kurosch subgroup theorem allows us to conclude that $T\ast\mathbb{Z}\ast\mathbb{Z}$ is a subgroup of finite index in $G$. Since $T\ast\mathbb{Z}\ast\mathbb{Z}$ has deficiency greater than one, then $G$ has a finite index subgroup with deficiency greater than one and hence $DG(G)>0$. Therefore, $G$ satisfies
\[
rdef(G)-1<DG(G).
\]
\end{ex}
\vskip 2mm

\cref{deficiency for finite index subgroups} implies the following
result by D. Allcock (\cite{allcock}).
\vskip 2mm

\begin{cor}\label{cor: allcock}$(\cite{allcock}$, only theorem$)$.\\
Let the group $G$ be given by the presentation
\[
P=\langle x_1,\ldots,x_n \mid u_1^{m_1},\ldots,u_r^{m_r}\rangle.
\]
Suppose $H$ is a normal subgroup of $G$ of index $N<\infty$, and that for each $j$, $1\leq j\leq r$, we have $u^{k}_j\notin H$ for $k=1,\ldots,m_j-1$. Then the rank of the abelianisation of $H$ is at least
\[
1+|G:H|(n-1-\sum_{j=1}^{r}\dfrac{1}{m_j}).
\]
\end{cor}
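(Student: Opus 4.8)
The plan is to re-run the Reidemeister--Schreier computation carried out inside the proof of \cref{deficiency for finite index subgroups}, now with the given integers $m_j$ in the role played there by the $k_j$. The first step is to restate the hypothesis on $H$ as a statement about orders of elements. Let $\varphi\colon F_n\to G$ be the canonical surjection induced by $P$, so that $\ker\varphi$ is the normal subgroup of $F_n$ generated by $u_1^{m_1},\ldots,u_r^{m_r}$. Because each $u_j^{m_j}$ is a relator, $\varphi(u_j)$ has order dividing $m_j$ in the quotient $G/H$; the assumption that $u_j^{k}\notin H$ for $1\le k\le m_j-1$ excludes every proper divisor of $m_j$, so $\varphi(u_j)$ has order exactly $m_j$ in $G/H$ for each $j$. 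Writing $\overline{H}=\varphi^{-1}(H)$, which is a normal subgroup of $F_n$ of index $N=|G:H|$ with $F_n/\overline{H}\cong G/H$, this says $o(u_j,F_n/\overline{H})=m_j$.

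The second step is the rewriting count. By the Nielsen--Schreier formula $\overline{H}$ is free of rank $N(n-1)+1$, and the Reidemeister--Schreier process yields a presentation of $H\cong\overline{H}/\ker\varphi$ on $N(n-1)+1$ generators in which $\ker\varphi$ is generated, as a normal subgroup of $\overline{H}$, by the conjugates $s\,u_j^{m_j}\,s^{-1}$ with $s$ running over a left transversal $T$ of $\overline{H}$ in $F_n$ and $1\le j\le r$. Since $\langle u_j\rangle\le C_{F_n}(u_j^{m_j})$ and $\overline{H}\trianglelefteq F_n$, applying \cref{lemma} with $g=u_j^{m_j}$ and $K=\langle u_j\rangle$ shows that for each $j$ only $|F_n:\overline{H}\langle u_j\rangle|=N/o(u_j,F_n/\overline{H})=N/m_j$ of these conjugates are needed. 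Hence $H$ admits a finite presentation of deficiency
\[
N(n-1)+1-\sum_{j=1}^{r}\frac{N}{m_j}=1+N\Bigl(n-1-\sum_{j=1}^{r}\frac{1}{m_j}\Bigr).
\]

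The last step is to pass from this presentation to the abelianisation of $H$: the deficiency of any finite presentation of a group is at most the first Betti number $\beta_1$ of that group (see \cref{eq; def  beta rank}), and $\beta_1(H)$ is precisely the rank of the abelianisation of $H$; hence that rank is at least $1+N\bigl(n-1-\sum_{j}1/m_j\bigr)$, which is the assertion. The only point needing care is the relator bookkeeping in the Reidemeister--Schreier step, namely that after the reduction provided by \cref{lemma} exactly $\sum_j N/m_j$ relators remain; but this is word for word the computation already performed in the proof of \cref{deficiency for finite index subgroups}, so the corollary is in effect that argument read with each $m_j$ substituted for $k_j$, and nothing genuinely new has to be checked.
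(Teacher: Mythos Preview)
Your proof is correct and follows essentially the same approach as the paper's. The only cosmetic difference is that the paper invokes \cref{deficiency for finite index subgroups} as a black box---obtaining $\operatorname{def}(H)\geq 1+|G:H|(rdef(P)-1)$ and then noting $rdef(P)\geq n-\sum_j 1/m_j$---whereas you rerun the Reidemeister--Schreier count with $m_j$ substituted for $k_j$ directly; as you yourself observe at the end, these are the same computation.
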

\begin{proof}
By \cref{deficiency for finite index subgroups}, the deficiency of $H$ is at least
\[
1+|G:H|(rdef(P)-1),
\]
where the latter is greater than or equal to
\[
1+|G:H|(n-1-\sum_{j=1}^{r}\dfrac{1}{m_j}).
\]
The result follows as the deficiency is a lower bound for the rank of the abelianisation.
\end{proof}
\vskip 2mm

\begin{rem}
In \cite{allcock}, \cref{cor: allcock} is used to conclude \cref{thm thomas} proved by R. Thomas in \cite{thomas}. As \cref{deficiency for finite index subgroups} implies \cref{cor: allcock}, then \cref{deficiency for finite index subgroups} also implies \cref{thm thomas}.
\end{rem}
\vskip 2mm

\begin{cor}\label{cor: rdef DG PDG and RG ineq}
Let $G$ be a finitely presented group. Then
\[
rdef(G)-1\leq DG(G)\leq DG_p(G)\leq RG(G),
\]
\[
rdef(G)-1\leq DG(G)\leq GDG(G)\leq \beta^{2}_1(G).
\]
In particular, if the residual deficiency of $G$ is greater than one, then $G$ has positive rank gradient, positive first $L^2$-Betti number and is large.

\end{cor}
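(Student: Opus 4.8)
The statement is a concatenation of inequalities already established, so the plan is simply to assemble them. First I would quote \cref{cor positive res def}, which gives $rdef(G)-1\leq DG(G)$. Next, \cref{cor: Def grad, pdef grad and rank grad} supplies $DG(G)\leq DG_p(G)\leq RG(G)$, and \cref{cor: DG GDG and L2 betti} supplies $DG(G)\leq GDG(G)\leq\beta^{2}_1(G)$. Chaining \cref{cor positive res def} with each of these two corollaries yields the two displayed lines verbatim.

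For the final ``in particular'' clause I would argue as follows. Assume $rdef(G)>1$. Then the first chain gives $DG(G)\geq rdef(G)-1>0$, and hence $RG(G)\geq DG(G)>0$ and $\beta^{2}_1(G)\geq DG(G)>0$ by the two chains just assembled. For largeness, recall from the discussion immediately following the definition of $DG$ that $DG(G)>0$ holds if and only if $G$ has a finite index subgroup $H$ with $def(H)>1$. By the theorem of B. Baumslag and S. Pride (\cite{baums-pride}), such an $H$ is large, and since largeness is inherited by finite index supergroups, $G$ itself is large.

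I do not anticipate any real obstacle here: each step is an appeal to an earlier corollary or to the definition of the deficiency gradient, and no new computation is required. The only point needing a little care is to record that $DG(G)>0$ is literally equivalent to the existence of a finite index subgroup of deficiency strictly greater than one, which is precisely the input needed to invoke Baumslag--Pride; everything else is formal bookkeeping of the already-proved inequalities.
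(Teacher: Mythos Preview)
Your proposal is correct and matches the paper's own proof essentially line for line: the paper likewise assembles the two chains by citing \cref{cor positive res def}, \cref{cor: Def grad, pdef grad and rank grad}, and \cref{cor: DG GDG and L2 betti}, and then derives largeness from $DG(G)>0$ via the Baumslag--Pride theorem. There is nothing to add.
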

\begin{proof}
The inequalities follow from Corollary \ref{cor: Def grad, pdef grad and rank grad}, \cref{cor: DG GDG and L2 betti} and % (\cref{eq: p res rank and rank grad}) and 
\cref{cor positive res def}. Positive rank gradient and first $L^2$-Betti number are immediate from the inequalities. For largeness, if $rdef(G)>1$, then $DG(G)>0$, which is equivalent to having a finite index subgroup with deficiency greater than one. The latter implies largeness by \cite{baums-pride}.
\end{proof}
\vskip 2mm

The residual deficiency of a finite presentation can easily be computed or at least approximated in some cases. To illustrate this, we first need the following definition and lemma.
\vskip 2mm

\begin{defn}\label{def: no collapse}
We say that the finite presentation
\[
P=\langle x_1,\ldots,x_n \mid r_1^{a_1},\ldots, r_m^{a_m} \rangle
\]
has a \emph{quotient with no collapse} if $G$, the group defined by $P$, has a homomorphism $\psi$ from $G$ to a group $N$, such that the order of $\psi\circ\varphi(r_i)$ in $N$ is $a_i$, for all $i$, $1\leq i\leq m$, where $\varphi$ is the canonical map induced by $P$ from $F_n$ to $G$.
\end{defn}
\vskip 2mm

Consider the particular case when $N$ is residually finite. In this case we say $G$ has a \emph{residually finite quotient with no collapse}. 
\vskip 2mm

\begin{lem}\label{lem: no collapse}
Let $G$ be given by a presentation $P=\langle x_1,\ldots,x_n \mid r_1^{a_1},\ldots, r_m^{a_m} \rangle$, where the words $r_i$ are not expressed as proper powers, for $1\leq i\leq m$. Suppose $G$ has a residually finite quotient with no collapse $\psi$. Then
\[
rdef(P)=n-\sum_{i=1}^{m}\dfrac{1}{a_i}.
\]
\end{lem}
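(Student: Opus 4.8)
The plan is to show that for each $i$ the integer $k_i$ of \cref{def: residual def} --- that is, the order of $\varphi(r_i)$ in $G/R_G$ --- equals $a_i$. Since the words $r_i$ are not proper powers, they are exactly the elements called $u_i$ in that definition, so $rdef(P) = n - \sum_{i=1}^m 1/k_i$, and the asserted value then follows at once.

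One inequality is immediate. As $r_i^{a_i}$ is a relator of $P$, we have $\varphi(r_i)^{a_i} = e$ in $G$, hence also in the quotient $G/R_G$; therefore $k_i \mid a_i$, so $1/k_i \geq 1/a_i$ and $rdef(P) = n - \sum_{i=1}^m 1/k_i \leq n - \sum_{i=1}^m 1/a_i$.

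For the reverse inequality I would exploit the residually finite quotient with no collapse $\psi : G \to N$. The key point is that $\psi$ factors through $G/R_G$: for any finite index subgroup $K$ of $N$ the preimage $\psi^{-1}(K)$ has finite index in $G$, so $R_G \subseteq \psi^{-1}(K)$, i.e. $\psi(R_G) \subseteq K$; intersecting over all such $K$ and using that $N$ is residually finite forces $\psi(R_G) = \{e\}$, whence $R_G \leq \ker\psi$ and $\psi = \bar\psi \circ \pi$ for some homomorphism $\bar\psi : G/R_G \to N$, where $\pi : G \to G/R_G$ is the natural projection. Since $\pi(\varphi(r_i))$ has order $k_i$ in $G/R_G$, applying $\bar\psi$ and using $\bar\psi \circ \pi \circ \varphi = \psi \circ \varphi$ gives $\big(\psi(\varphi(r_i))\big)^{k_i} = e$ in $N$; but the no-collapse hypothesis says $\psi(\varphi(r_i))$ has order exactly $a_i$ in $N$, so $a_i \mid k_i$. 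Combined with $k_i \mid a_i$ this yields $k_i = a_i$ for every $i$, and hence $rdef(P) = n - \sum_{i=1}^m 1/a_i$.

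The only step that needs a little care is the factoring of $\psi$ through $G/R_G$, but this is just the standard fact that the finite residual lies in the kernel of every homomorphism to a residually finite group (preimages of finite-index subgroups are finite-index); everything else is bookkeeping with orders of elements, and in particular no presentation computations are needed here.
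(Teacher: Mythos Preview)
Your proof is correct and follows essentially the same route as the paper: both arguments hinge on the observation that $R_G \leq \ker\psi$ (because $G/\ker\psi$ is residually finite), so the order of $\varphi(r_i)$ in $G/R_G$ is at least its order $a_i$ in $N$, while the relator $r_i^{a_i}$ forces it to be at most $a_i$. Your write-up is a bit more explicit about the two divisibilities $k_i \mid a_i$ and $a_i \mid k_i$, whereas the paper compresses this into a single sentence, but the content is the same.
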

\begin{proof}
Let $K$ be the kernel of $\psi$. The condition that the order of $\psi\circ\varphi(r_i)$ in $N$ is $a_i$ says that the order of the image of $\varphi(r_i)$ in $G/K$ is $a_i$. As the image of $\psi$ is residually finite, the quotient $G/K$ is residually finite and hence the finite residual $R_G$ is contained in $K$. Therefore, the order of the image of $\varphi(r_i)$ in $G/R_G$ is $a_i$ for all $i$, $1\leq i\leq m$. Since the words $r_i$ cannot be expressed as proper powers, then the residual deficiency of $P$ is $n-\sum_{i=1}^{m}1/a_i$.
\end{proof}
\vskip 2mm

\begin{ex}\label{ex: gen triangle gps}
Consider generalised triangle groups. These are groups given by presentations of the form
\[
\langle a,b \mid a^l,b^m,w^n\rangle,
\]
where
\[
w = a^{r_1} b^{s_1}\cdots a^{r_k} b^{s_k} \ \ \ \ \ \ \ (k\geq 1,0 < r_i < l,0 < s_i < m,\ \text{where $1\leq i\leq k$}).
\]
Let $G$ be a generalised triangle group. In \cite{gen traingle groups}, a representation $\rho$ from $G$ to $PSL(2,\mathbb{C})$ was constructed, such that the orders of $a$, $b$ and $w$ in $PSL(2,\mathbb{C})$ are $l$, $m$ and $n$, respectively. The image of $G$ in $PSL(2,\mathbb{C})$ is finitely generated. By Mal'cev (\cite{malcev}), every finitely generated subgroup of $PSL(2,\mathbb{C})$ is residually finite. Hence, the hypotheses of \cref{lem: no collapse} are satisfied. Therefore, if $P=\langle a,b \mid a^l,b^m,w^n\rangle$, its residual deficiency is $2-1/l-1/m-1/n$, which is greater than one whenever $1/l+1/m+1/n<1$.
\vskip 2mm

In \cite{gen traingle groups} these groups were shown to have a finite index subgroup with deficiency greater than one whenever $1/l+1/m+1/n<1$. The arguments used are very similar to the arguments in \cref{lemma} and \cref{deficiency for finite index subgroups}. However, for part of the argument, the authors of \cite{gen traingle groups} rely on the fact that by removing the relator $w^n$ one ends up with a free product of finite cyclic groups, whereas \cref{lemma} and \cref{deficiency for finite index subgroups} do not.
\vskip 2mm

\end{ex}
\vskip 2mm

\begin{ex}\label{ex; one relator quot of free prod of cyclics}
A generalisation of the technique used in \cite{gen traingle groups} was used in \cite{Fine-Howie-Rosenberger} to prove that one-relator quotients of free products of non-trivial finite cyclic groups satisfy the Freiheitssatz. The type of groups dealt with in \cite{Fine-Howie-Rosenberger} are, more specifically, groups given by 
\begin{equation}\label{eq: one relator quot of free prod of cyclics}
P=\langle x_1,\ldots,x_n \mid x_1^{m_1},\ldots,x_n^{m_n}, w^{s}\rangle,
\end{equation}
where $m_i,s>1$, for all $i$, $1\leq i\leq n$, and $w$ is a word that involves all the generating elements $x_1,\ldots,x_n$. The Freiheitssatz condition in this case means that the subgroup generated by any $n-1$ elements from the generating set is the free product of the cyclic groups corresponding to the $n-1$ chosen elements. For instance, if the set of $n-1$ elements from the generating set consists of $x_1,\ldots,x_{n-1}$, then they generate a subgroup in $G$ isomorphic to $C_{m_1}\ast\cdots\ast C_{m_{n-1}}$. 
\vskip 2mm

This result was obtained by showing that $G$ admits a representation into $PSL(2,\mathbb{C})$ where the image of $x_i$ and the image of $w$ in $PSL(2,\mathbb{C})$ have order $m_i$ and $s$, respectively, for all $i$, $1\leq i\leq n$. Although not pointed out in \cite{Fine-Howie-Rosenberger}, this result is useful to prove that a substantial collection of these groups have a finite index subgroup with deficiency greater than one and hence are large. For this, it suffices to apply \cref{lem: no collapse}. The residual deficiency of $P$ is $n-\sum_{i=1}^{n}1/m_i-1/s$ and by \cref{deficiency for finite index subgroups}, whenever this quantity is greater than one, the group defined by $P$ will have a finite index subgroup with deficiency greater than one. The residual deficiency of $P$ is greater than one, for instance, for all $n\geq 4$, since $m_i,s>1$, for all $i$, $1\leq i\leq n$. If $n=3$, then one of $m_1,m_2,m_3$ or $s$ must be greater than two.
\vskip 2mm

\end{ex}

\vskip 2mm

\begin{ex}
A Coxeter group is a group $G$ given by a presentation of the type
\[
\langle a_1,\ldots,a_n  \mid a_1^{2},\ldots,a_n^{2},(a_ia_j)^{m_{ij}},1\leq i<j\leq n \rangle,
\]
where $m_{ij}\geq 2$. The value $m_{ij}$ may be $\infty$ in which case the word
$(a_ia_j)^{m_{ij}}$ is omitted from the set of relators.
\vskip 2mm

Since Coxeter groups are finitely generated and have faithful representations into $GL(n,\mathbb{R})$
(Appendix $D$ in \cite{davis}), they are residually finite (\cite{malcev}). Moreover, due to the solution of the word problem in Coxeter groups, the words $a_1,\ldots,a_n,a_ia_j,$ define non-trivial
elements of order $2$ and $m_{ij}$ in the group, respectively (\cite{davis}, chapter $3$, section
$4$). Hence, every Coxeter group $G$ can be surjected onto itself to obtain a residually finite quotient with no collapse. Applying \cref{lem: no collapse} gives that the residual deficiency of the presentation above for a Coxeter group is $n/2-\sum_{i<j}m_{ij}^{-1}$.

\vskip 2mm

It is known that Coxeter groups are either virtually abelian or large
(\cite{davis}, Theorem $14.1.2$). In fact, there is a way of telling,
given a Coxeter presentation, whether the group is large or not. For this, consider the \emph{Coxeter diagram} associated to the Coxeter group. Say the given Coxeter group is

\[
\langle a_1,\ldots,a_n  \mid a_1^{2},\ldots,a_n^{2},(a_ia_j)^{m_{ij}},1\leq i<j\leq n \rangle.
\]
The Coxeter diagram associated to this presentation is a graph with $n$
vertices where two vertices, say $(i,j)$, are joined by an edge if and
only if $m_{ij}\geq 3$. Consider the connected components of the graph.
These are called the irreducible components of the Coxeter diagram. These
components are Coxeter diagrams of other Coxeter groups. If all these
components correspond to either spherical Coxeter groups (i.e. finite
Coxeter groups), or Euclidean Coxeter groups, both of which have been
completely classified (\cite{davis}, Appendix $C$), then the group is
virtually abelian. However, if one of the components of the Coxeter diagram is
not either spherical or Euclidean, then the group is large.
\vskip 2mm

We already know that having a finite index subgroup with deficiency greater than one is a stronger property than largeness. Hence, this property may be used to study large Coxeter groups. There are plenty of examples of Coxeter groups with a finite index subgroup that has deficiency greater than one; given $n$, it suffices to have $n/2-\sum_{i<j}m_{ij}^{-1}>1$.
If, for instance, $m_{ij}>n(n-1)/(n-2)$, for all $m_{ij}$ which
appear as powers in the presentation, then a quick computation shows that
the residual deficiency is greater than one. On the other hand, the class
of Coxeter groups which are fundamental groups of $3$-dimensional hyperbolic
orbifolds, although large (by \cite{agol}), cannot have finite index subgroups with deficiency greater than one (\cite{epstein}).
\end{ex}

\vskip 2mm

\begin{ex}
Consider the presentation
\begin{equation}\label{eq: tetrahedral pres}
\langle x_1,x_2,x_3 \mid x_1^{e_1}, x_2^{e_2}, x_3^{e_3}, R_1^{m}, R_2^{p}, R_3^{q} \rangle,
\end{equation}
where $e_i=0$ or $e_i\geq 2$ for $i=1,2,3$; $2\leq m,p,q$; $R_1$ is a cyclically reduced word in the free product on $x_1, x_2$ which involves both $x_1$ and $x_2$ but not $x_3$, $R_2$ is a cyclically reduced word in the free product on $x_1, x_3$ which involves both $x_1$ and $x_3$ but not $x_2$ and $R_3$ is a cyclically reduced word in the free product on $x_2, x_3$ which involves both $x_2$ and $x_3$ but not $x_1$. Further, each $R_i$, $i=1,2,3$ is not a proper power in the free product on the generators it involves. 
\vskip 2mm

According to the authors of \cite{fine levin rosenberger}, E. Vinberg called a group \emph{generalised tetrahedral} if it is defined by a presentation such as \cref{eq: tetrahedral pres}. 
\vskip 2mm

Let $G$ be a generalised tetrahedral group. In \cite{fine levin rosenberger}, a representation $\rho: G\longrightarrow PSL(2,\mathbb{C})$ is constructed where $\rho(x_i)$ has infinite order if $e_i=0$ or exact order $e_i$ if $e_i\geq 2$ for $i=1,2,3$, and $\rho(R_1)$ has order $m$, $\rho(R_2)$ has order $p$, and $\rho(R_3)$ has order $q$. The authors of \cite{fine levin rosenberger} call such a representation \emph{essential}. In this case, an essential representation is the same as a quotient with no collapse. Therefore, by \cref{lem: no collapse}, the residual deficiency of $G$ is greater than one if
\begin{equation}\label{eq: tetra res def}
\dfrac{1}{e_1}+\dfrac{1}{e_2}+\dfrac{1}{e_3}+\dfrac{1}{m}+\dfrac{1}{p}+\dfrac{1}{q}<2,
\end{equation}
\vskip 2mm
where $1/e_i$ is $0$ in case $e_i=0$.
\vskip 2mm

The authors of \cite{fine levin rosenberger} use the representation $\rho$ to prove that if \cref{eq: tetra res def} holds then $G$ has a finite index subgroup with deficiency greater than one. They do so by using Lemma $3$ in \cite{gen traingle groups}, which is a particular case of \cref{lemma}. Lemma $3$ in \cite{gen traingle groups} uses the fact that when we remove $R_1^{m},R_2^{p}$ and $R_3^{q}$ from \cref{eq: tetrahedral pres}, we end up with a free product of finite cyclic groups. \cref{deficiency for finite index subgroups} does not use this fact.
\vskip 2mm

The authors of \cite{fine levin rosenberger} mention that their methods also give essential representations for groups defined by the following presentations:
\begin{equation}\label{eq: tetrahedral pres gen 1}
\langle x_1,\ldots,x_n \mid x_1^{e_1},\ldots, x_n^{e_n}, R_1^{m_1}(x_1,x_2),R_2^{m_2}(x_2,x_3),\ldots, R_n^{m_n}(x_n,x_1) \rangle,
\end{equation}
where $e_i=0$ or $e_i\geq 2$ for $i=1,\ldots,n$; $m_i\geq 3$ for $i=1,\ldots,n$, and for $i=1,\ldots,n$, $R_i(x_i,x_{i+1})$ is a cyclically reduced word in the free product on $x_i$, $x_{i+1}$ which involves both $x_i$ and $x_{i+1}$. And
\begin{equation}\label{eq: tetrahedral pres gen 2}
\langle x_1,\ldots,x_n  \mid x_1^{e_1}, \ldots, x_n^{e_n}, S_1^{m_1}(x_1,x_2),\ldots, S_{n-1}^{m_{n-1}}(x_1,x_n),S_n^{m_n}(x_2,x_3),
\end{equation}
\[
\ldots,S_{2n-3}^{m_{2n-3}}(x_2,x_n) \rangle,
\]
where $n\geq 3$, $e_i=0$ or $e_i\geq 2$ for $i=1,\ldots,n$, $m_j\geq 3$ for $j=1,\ldots,2n-3$, and for $j=1,\ldots,n-1$, $S_j$ is a cyclically reduced word in the free product on $x_1,x_j$ involving both $x_1$ and $x_j$. Moreover, for $j=0,\ldots,n-3$, $S_{n+j}$ is a cyclically reduced word in the free product on $x_2$ and $x_{j+3}$ involving both $x_2$ and $x_{j+3}$.
\vskip 2mm

The authors of \cite{fine levin rosenberger} do not mention whether the groups defined by \cref{eq: tetrahedral pres gen 1} and \cref{eq: tetrahedral pres gen 2} have a finite index subgroup with deficiency greater than one. However, the residual deficiency of these presentations is easily computed and hence by \cref{deficiency for finite index subgroups}, if the residual deficiency of \cref{eq: tetrahedral pres gen 1} or \cref{eq: tetrahedral pres gen 2} is greater than one, then the group it defines has a finite index subgroup with deficiency greater than one and hence are large.
\vskip 2mm

Some of the presentations above are relevant to the results in \cite{Edjvet-balanced}. In \cite{Edjvet-balanced}, M. Edjvet gave conditions under which balanced presentations define large groups. The balanced presentations considered needed to have at least two proper power relators. The proof was divided into two cases. The first case deals with balanced presentations which define groups with finite abelianisation. The second, when the presentations define groups with infinite abelianisation.
\vskip 2mm

If the group has finite abelianisation and at least one of the powers from the relators is greater than two, then the commutator subgroup has deficiency greater than one. If the group has infinite abelianisation and two of the powers in the relators are not coprime, then the group is large. This case uses results in \cite{Stohr} which only ensure largeness and not the existence of a finite index subgroup with deficiency greater than one.
\vskip 2mm

Consider the family of presentations given by \cref{eq: tetrahedral pres} and \cref{eq: tetrahedral pres gen 1}. Suppose $e_1=e_2=e_3=0$ for the presentations in \cref{eq: tetrahedral pres} and $e_i=0$, $i=1,\ldots,n$, for the presentations in \cref{eq: tetrahedral pres gen 1}. By the results in \cite{fine levin rosenberger} the presentations given by \cref{eq: tetrahedral pres} have residual deficiency greater than or equal to $3/2$, which is greater than one. Since $m_i\geq 3$, $1\leq i\leq n$, for the presentations given by \cref{eq: tetrahedral pres gen 1}, the residual deficiency is greater than or equal to $n-n/3$, which is greater than one if $n>1$. Therefore, a presentation in either of these two families which defines a group with infinite abelianisation, is an example of a group given by a balanced presentation which has a finite index subgroup with deficiency greater than one. Moreover, one does not need powers from the relators to have a common prime factor in order to establish largeness.
\vskip 2mm

It is not difficult to see that the families described above have many presentations that define groups with infinite abelianisation. It just suffices to have a single row or column of zeroes in its abelianisation matrix. We define the $(i,j)$ entry of the abelianisation matrix of a presentation $P = \langle x_1,\ldots, x_n \mid r_1,\ldots, r_m \rangle$ as the power sum of $x_j$ in $r_i$, which we denote by $\sigma_{x_j}(r_i)$. For example, for presentations given by \cref{eq: tetrahedral pres}, if $\sigma_{x_1}(R_1)=\sigma_{x_1}(R_2)=0$, then we get zeroes in the first column. For presentations given by \cref{eq: tetrahedral pres gen 1}, the abelianisation matrix only has non-zero elements on the diagonal or above it, with the exception of the $(n,1)$ entry corresponding to $\sigma_{x_1}(R_n^{m_n})$. If $\sigma_{x_1}(R_n)=0$ and $\sigma_{x_i}(R_i)=0$ for some $i$, where $i=1,\ldots,n$, then the abelianisation matrix reduces down to a upper triangular matrix with at least one zero in its diagonal. The determinant of such matrix is therefore $0$. Since the absolute value of the determinant of such matrix gives the index of the commutator subgroup in the group (index $0$ corresponding to infinite index), then the group defined by such presentation has infinite abelianisation.
\end{ex}

\vskip 2mm

\begin{ex}
Consider $m$ non-trivial elements $u_1,\ldots,u_m$ in $F_n$, the non-abelian free group of rank $n$. Denote by $\{a_1,\ldots,a_{n-1},t\}$, the canonical free generators of $F_n$. Suppose $\sigma_t(u_i)=s_i\neq 0$, for $1\leq i\leq m$. Define $k=|s_1\cdots s_m|$ and $n_i=|s_1\cdots s_{i-1}s_{i+1}\cdots s_m|$. Then the group $G$ given by the presentation
\[
P=\langle a_1,\ldots,a_{n-1},t \mid u_1^{n_1},\ldots, u_m^{n_m}\rangle,
\]
surjects onto $C_k$, the cyclic group of order $k$, by sending $a_i$ to $0$, for all $i$, $1\leq i\leq n-1$, and $t$ to $1$. The residual
deficiency of $G$ is bounded as follows
\[
rdef(G)\geq rdef(P)\geq n-\sum^{m}_{i=1}\dfrac{1}{n_i}.
\]
The above expression is greater than one if the values $|s_i|$ are sufficiently large. This construction gives examples of finitely presented groups of arbitrarily large negative deficiency which have finite index subgroups with deficiency greater than one. 
\vskip 2mm

\vskip 2mm

\end{ex}
\vskip 2mm

\begin{ex}
Consider the following deficiency one presentation where $n\geq 2$
\[
P=\langle a_1,\ldots,a_{n} \mid u_1^{s_1},\ldots, u_{n-1}^{s_{n-1}}\rangle,
\]
such that there is a prime $p$ that divides all the powers $s_i$, where $u_i$ is a non-trivial word in $F_n$, for all $i$, $1\leq i\leq n-1$.
As $F_n$ is residually finite $p$, then there is a normal
subgroup $H$ in $F_n$, with index in $F_n$ a power of $p$, such
that $u_{i_{0}}\notin H$, for some $i_0$, $1\leq i_0\leq n-1$, and
has minimal index with respect to this property. This means that
if $K$ is a finite index normal subgroup of $F_n$ with index a power of $p$
strictly less than $|F_n:H|$, then $u_i\in K$ for all $i$, $1\leq
i\leq n-1$.

\vskip 2mm

As the quotient $F_n/H$ is a $p$-group, it has non-trivial centre. Take a subgroup $M$ of order $p$ in the centre. Denote by $L$
the pullback of $M$ in $F_n$ under the canonical surjective homomorphism from $F_n$ to $F_n/H$. Note that $L$ is normal in $F_n$. As the index of $L$ in $F_n$ is less than the index of $H$ in $F_n$, then $u_i\in L$, $1\leq i\leq n-1$. This means that $u_{i_0}$ and any other $u_i$ not in $H$, has order $p$ in
$F_n/H$. Hence the residual deficiency of $P$ is bounded as follows
\[
rdef(P)\geq n-(n-2)-\dfrac{1}{p}>1.
\]
Therefore, the group defined by $P$ has a finite index subgroup with deficiency greater than one. Note that since $p$ divides all powers $s_i$, then the group has $p$-deficiency greater than one and hence it is large (\cite{jack-anitha}). However, this result does not tell us that the group has a finite index subgroup with deficiency greater than one.
\vskip 2mm

It is known by \cite{Stohr}, that all deficiency one presentations with a proper power relator define large groups. However, the question of whether such presentations define groups which have finite index subgroups with deficiency greater than one is still open. The previous example shows that if the deficiency one presentation only has proper power relators and the powers are all divided by a common prime, then the group defined has a finite index subgroup with deficiency greater than one. In particular, if the presentation has two generators and one relator which is a proper power, then the group it defines has a finite index subgroup with deficiency greater than one.
\end{ex}
\vskip 2mm

\section{Relative size of subgroups and the supermultiplicativity of residual deficiency}

The definitions and results in this section follow similar lines to the ones in Section $2$ of \cite{puchta-yiftach}.
\vskip 2mm

\begin{defn}\label{def: relsize}
Let $G$ be a group, $K$ a normal subgroup of $G$ and $g$ an element in $G$. Define the \emph{relative size of $g$ in $G$ with respect to $K$} as follows.
\vskip 2mm

First suppose $g\in K$. Define $\nu(g;G,K)$ to be the supremum over $\{o(\psi(a),G/K)\}$, where $a\in G$ is such that $a^n=g$ for some integer $n$, and where $\psi:G\longrightarrow G/K$. 
\vskip 2mm

If $g\notin K$, consider all possible expressions
of $g$ as $a^{n}$ where $n$ is a non-zero
integer. In this case, define $\nu(g;G,K)$ as the supremum over all
integers arising in such expressions. 
\vskip 2mm

The relative size of $g$ in $G$ with respect to $K$ is defined as $\nu(g;G,K)^{-1}$.
\vskip 2mm

\begin{note}
We say that the value $\nu(g;G,K)$ is infinite if the set $\{o(\psi(a),G/K)\}$ is not bounded. That is, if for every $\varepsilon>0$, there is $a\in G$ such that $a^{n}=g$, for some integer $n$, and $o(\psi(a),G/K)\geq \varepsilon$. In this case, the relative size of $g$ in $G$ with respect to $K$ is defined as $0$. %For the most part, however, we will deal with non-abelian free groups of finite rank, where $\nu(g;G,K)$ has a finite value. 
\end{note}
\vskip 2mm

Suppose $\nu(g;G,K)$ is finite. We call an element $a\in G$ which reaches the supremum in either of the conditions stated above, a \emph{minimal root of $g$ in $G$ with respect to $K$}.
\end{defn}
\vskip 2mm

\begin{defn}\label{def: rel size of S in G wrt K}
Let $K$ be a normal subgroup in $G$ and $S=\{s_1,\ldots,s_m\}\subseteq G$. 
We define $relsize(S;G,K)$ the \emph{relative size of $S$ in $G$ with respect to $K$} to be
\[
relsize(S;G,K)=\sum_{i=1}^{m}\nu(s_i;G,K)^{-1}.
\]
Let $M$ be a finitely generated normal subgroup of $G$. We define $relsize(M;G,K)$ the \emph{relative size of
$M$ in $G$ with respect to $K$} to be
\[
relsize(M;G,K)=\underset{\underset{\langle\langle S\rangle\rangle^{G}=M}{\underset{|S|<\infty}{S\subset
G}}}{\inf}\{relsize(S;G,K)\}.
\]
\end{defn}
\vskip 2mm

\begin{rem}
Note that \cref{def: residual def} may be restated in the following way. Let $G$ and $Q$ be as in \cref{def: residual def}. Denote by $\overline{R}_G$ the inverse image of $R_G$ under $\varphi: F_n\longrightarrow G$. Since $R\leqslant \overline{R}_G$, then
\[
rdef(Q)=n-relsize(R;F_n,\overline{R}_G),
\]
\[
rdef(G)=\underset{\langle X|R\rangle\cong G}{\text{sup}} \left\{ |X|-relsize(R;F_n,\overline{R}_G) \right\},
\]
where $F_n$ is freely generated by $X$ and the supremum is taken over all finite presentations of $G$. Note that for $\omega\in F_n$ and $N$ a normal subgroup in $F_n$, $\nu(\omega;F_n,N)$ is always finite. 
\end{rem}

\vskip 2mm

\begin{lem}\label{new lemma}
Let $G$ be a group, $H$ a finite index normal subgroup in $G$, $K$ a normal subgroup in $G$ and $g$ an element in $H$. If $\nu(g;G,K)$ is infinite, then $\nu(g;H,H\cap K)$ is infinite too.
\end{lem}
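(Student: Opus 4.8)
The plan is to unwind the hypothesis using \cref{def: relsize}, extract a sequence of roots of $g$ in $G$ whose sizes grow without bound, and then replace each such root by a bounded power of itself lying in $H$, losing only a bounded factor. Since $g\in H$, we have $g\in K\iff g\in H\cap K$, so the clause of \cref{def: relsize} governing $\nu(g;G,K)$ is the same one governing $\nu(g;H,H\cap K)$, and I treat the two clauses in parallel; note also that $H\trianglelefteq G$ forces $H\cap K\trianglelefteq H$, so $\nu(g;H,H\cap K)$ is defined. Write $d=|G:H|$.

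Suppose first $g\in K$. By the note following \cref{def: relsize}, $\nu(g;G,K)$ being infinite means there are $a_j\in G$ and integers $n_j$ with $a_j^{\,n_j}=g$ and $o(a_jK,\,G/K)\to\infty$, where an infinite order is allowed and counts as exceeding every real number. Let $\ell_j$ be the order of $a_jH$ in the finite group $G/H$, so $\ell_j\mid d$; since $g=a_j^{\,n_j}\in H$, the element $a_jH$ is killed by the exponent $n_j$, hence $\ell_j\mid n_j$. Set $b_j=a_j^{\,\ell_j}$; then $b_j\in H$ and $b_j^{\,n_j/\ell_j}=a_j^{\,n_j}=g$, so $b_j$ is a root of $g$ inside $H$.

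It remains to bound the size of $b_j$ in $H$. The second isomorphism theorem provides an injection $H/(H\cap K)\hookrightarrow G/K$, $h(H\cap K)\mapsto hK$, so for $h\in H$ the order of $h(H\cap K)$ in $H/(H\cap K)$ equals that of $hK$ in $G/K$. Applying this to $b_j$,
\[
o\bigl(b_j(H\cap K),\,H/(H\cap K)\bigr)=o(b_jK,\,G/K)=o\bigl((a_jK)^{\ell_j},\,G/K\bigr)\ \ge\ \frac{o(a_jK,\,G/K)}{d}\ \longrightarrow\ \infty,
\]
with the convention that $(a_jK)^{\ell_j}=b_jK$ has infinite order whenever $a_jK$ does, which makes the claim immediate in that case. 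Thus $\{\,o(\psi(a),H/(H\cap K)):a\in H,\ a^n=g\ \text{for some}\ n\in\mathbb{Z}\,\}$ is unbounded, i.e.\ $\nu(g;H,H\cap K)$ is infinite. The case $g\notin K$ is the same with ``order modulo $K$'' replaced by ``exponent'': one extracts $a_j\in G$ and $n_j\in\mathbb{Z}$ with $a_j^{\,n_j}=g$ and $|n_j|\to\infty$, and $b_j=a_j^{\,\ell_j}\in H$ then satisfies $b_j^{\,n_j/\ell_j}=g$ with $|n_j/\ell_j|\ge|n_j|/d\to\infty$.

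I do not expect a genuine obstacle: the argument rests on the single observation that a power $a^n$ lying in the normal subgroup $H$ forces $o(aH,G/H)\mid n$, which lets one pass from $a$ to $a^{\ell}\in H$ while scaling exponents and quotient-orders by at most the bounded factor $d=|G:H|$. The only points requiring attention are matching up the two clauses of \cref{def: relsize} (handled by $g\in K\iff g\in H\cap K$) and the degenerate case in which $a_jK$ has infinite order in $G/K$, which is dealt with at once because then $b_jK=(a_jK)^{\ell_j}$ is also of infinite order.
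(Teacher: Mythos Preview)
Your proof is correct and follows essentially the same approach as the paper: pass from a root $a$ of $g$ in $G$ to a power of $a$ lying in $H$, losing at most the bounded factor $|G:H|$ in the relevant size (order modulo $K$, or exponent). In fact your version is slightly more careful: the paper replaces $a$ by $a^{s}$ with $s=|G:H|$, but never checks that $a^{s}$ is again a root of $g$ (which would require $s\mid n$); your use of $\ell_j=o(a_jH,G/H)$ with $\ell_j\mid n_j$ handles this point cleanly.
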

\begin{proof}
First suppose $g\in K$. Denote by $\psi$ the canonical surjective homomorphism from $G$ onto $G/K$. If $\nu(g;G,K)$ is infinite, then for all $n\in \mathbb{N}$ there is an element $a\in G$ such that $o(\psi(a),G/K)=m$, where $m\geq n$. 
\vskip 2mm

Let $s=|G:H|$ and suppose $\nu(g;H,H\cap K)=C$. Consider $s(C+1)=n$. Since, $\nu(g;G,K)$ is infinite, there is $a\in G$ such that $o(\psi(a),G/K)=m\geq s(C+1)$. As $s=|G:H|$, then $a^{s}\in H$. Since $o(\psi(a),G/K)=m\geq s(C+1)$, then $o(\psi(a^s),H/K)\geq C+1$ which contradicts the assumption that $\nu(g;H,H\cap K)=C$.
\vskip 2mm

If $g\notin K$, then instead of considering $o(\psi(a),G/K)$, we consider $n\in \mathbb{N}$, such that $a^n=g$. In this case, the arguments above work equally well.
\end{proof}
\vskip 2mm

Equivalently, this lemma says that if the relative size of $g$ in $G$ with respect to $K$ is zero, then its relative size in $H$ with respect to $H\cap K$ is also zero.
\vskip 2mm

\begin{lem}\label{lem: prev to supermult of res def: alternative}
Let $G$ be a group. Consider $H$ a finite index normal subgroup
of $G$ and $K$ a normal subgroup of $G$. Let $g$ be an element of $H$ and suppose $g$ has a minimal root $a$ in $G$ with respect to $K$. Denote by
$\psi$ the surjective homomorphism from $G$ to $G/H$. Then
\[
\nu(g;H,K\cap H)\geq \dfrac{\nu(g;G,K)}{o(\psi(a), G/H)}.
\]
\end{lem}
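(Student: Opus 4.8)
The plan is to manufacture an explicit root of $g$ lying inside $H$ out of the given minimal root $a$, namely $b=a^{d}$ where $d=o(\psi(a),G/H)$, and to show that this single $b$ already witnesses the desired lower bound for $\nu(g;H,K\cap H)$. So the whole proof is just a verification that $b$ is an admissible root in the sense of \cref{def: relsize} together with an order computation for its image.

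First I would record the elementary facts about $b=a^{d}$. Since $H\trianglelefteq G$ the quotient $G/H$ makes sense and $(aH)^{d}=H$, so $a^{d}\in H$. Next, because $a$ is a root of $g$ there is a nonzero integer $n$ with $a^{n}=g$; as $g\in H$ this gives $(aH)^{n}=H$ in $G/H$, hence $d=o(\psi(a),G/H)$ divides $n$. Therefore $b=a^{d}\in H$ and $b^{n/d}=a^{n}=g$ with $n/d$ a nonzero integer, so $b$ is a legitimate element to feed into the definition of $\nu(g;H,K\cap H)$.

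Now I would split along the two cases of \cref{def: relsize}. If $g\notin K$ then $g\notin K\cap H$, so $\nu(g;H,K\cap H)$ is the supremum of the integers $m$ for which $g$ is an $m$-th power of an element of $H$; applying the above to the minimal root, i.e. to $g=a^{\nu(g;G,K)}$, we get $g=b^{\nu(g;G,K)/d}$, hence $\nu(g;H,K\cap H)\geq \nu(g;G,K)/d$, which is the claim. If $g\in K$ then $g\in K\cap H$, and I would compute the order of the image of $b$ under $\pi\colon H\to H/(K\cap H)$. By the second isomorphism theorem $H/(K\cap H)\cong HK/K\leqslant G/K$, and under this identification $\pi(b)$ corresponds to $(aK)^{d}$; hence $o(\pi(b),H/(K\cap H))=o(aK,G/K)/\gcd\bigl(d,o(aK,G/K)\bigr)$ by the elementary formula $o(x^{d})=o(x)/\gcd(d,o(x))$ (valid since $o(aK,G/K)$ is finite, a minimal root existing). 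As $a$ is a minimal root with respect to $K$ we have $o(aK,G/K)=\nu(g;G,K)$, and replacing the denominator $\gcd(d,\nu(g;G,K))$ by the larger value $d$ only decreases the quotient, so $o(\pi(b),H/(K\cap H))\geq \nu(g;G,K)/d$. Since $b$ is an admissible root of $g$ in $H$, taking the supremum gives $\nu(g;H,K\cap H)\geq \nu(g;G,K)/d=\nu(g;G,K)/o(\psi(a),G/H)$.

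There is no real obstacle here; the only things needing a line of care are the divisibility $d\mid n$ (which uses $H\trianglelefteq G$) and the bookkeeping around the two branches of the definition and nonzero exponents. Note that finite index of $H$ is used only to ensure $o(\psi(a),G/H)$ is finite, so that the right-hand side is a genuine positive quantity; if it were infinite the stated inequality would be vacuous given that $\nu(g;G,K)$ is finite.
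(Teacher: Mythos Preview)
Your proof is correct and follows essentially the same approach as the paper: both set $b=a^{d}$ with $d=o(\psi(a),G/H)$, handle the two cases of \cref{def: relsize} separately, and bound $\nu(g;H,K\cap H)$ from below by the order of $b$ modulo $K\cap H$. The only cosmetic difference is in the $g\in K$ case, where the paper phrases the order computation via $\mathrm{lcm}(d,m)/d$ while you use the equivalent $m/\gcd(d,m)$; your version is arguably cleaner, and you are also more explicit than the paper in recording that $b$ is indeed a root of $g$ in $H$ before invoking the definition.
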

\begin{proof}
Let $l=o(\psi(a), G/H)$. First assume $g\notin K$ and $m=\nu(g;G,K)$. In this case $a^{m}=g$. As $g\in H$ and $a^l\in H$, then $l$ divides $m$. Since $(a^l)^{m/l}=a^m=g$, then $\nu(g;H,K\cap H)\geq m/l$.
\vskip 2mm

Now suppose $g\in K$. Let $m=\nu(g;G,K)$. By definition of $\nu(g;G,K)$, $a^{m}\in K$ and $a^{j}\notin K$ for all $j<m$. If $l=o(\psi(a), G/H)$, then $a^{l}\in H$. Therefore, if $n$ is the minimum common multiple of $l$ and $m$, then $a^{n}\in H\cap K$. Since $n$ is a multiple of $l$, then $l$ divides $n$ and hence $a^n=(a^l)^{n/l}$. 
\vskip 2mm

The element $a^l$ belongs to $H$. Denote it by $b$. Then $b^{n/l}\in K$ and $b^i\notin K$ for all $i<n/l$. Since $g$ and $b$ are elements in $H$, then $\nu(g;H,K\cap H)\geq n/l\geq m/l$, where the last inequality follows from the fact that $n$ is a multiple of $m$.
\end{proof}
\vskip 2mm

\begin{thm}\label{thm: res size inequality: alternative}
Let $M$ and $K$ be normal subgroups of $G$, where $M$ is finitely generated as a normal subgroup of $G$. If $H$ is a finite index
normal subgroup of $G$ which contains $M$, then
\[
relsize(M;H,K\cap H)\leq |G:H|relsize(M;G,K).
\]

\end{thm}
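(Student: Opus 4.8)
The plan is to bound the relative size of $M$ in $H$ (with respect to $K \cap H$) by passing through a finite generating set for $M$ as a normal subgroup of $G$, conjugating its elements appropriately, and controlling how the value $\nu$ can only drop by a bounded factor when we restrict from $G$ to $H$. The model for the argument is \cref{lemma} together with \cref{lem: prev to supermult of res def: alternative}, in the same spirit as the passage from \cref{deficiency for finite index subgroups}.

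First I would take a finite set $S = \{s_1,\dots,s_m\} \subseteq G$ with $\langle\langle S \rangle\rangle^G = M$ and $relsize(S;G,K)$ as close as we like to $relsize(M;G,K)$ (and if some $\nu(s_i;G,K)$ is infinite, \cref{new lemma} handles that coordinate, forcing the corresponding relative size to be $0$ in $H$ as well, so I may assume all the $\nu(s_i;G,K)$ are finite, with minimal roots $a_i$). For each $i$, \cref{lemma} applied with $g = s_i$, with the cyclic group $\langle a_i \rangle \leqslant C_G(s_i)$ playing the role of $K$ in that lemma, and with $H$ the finite index normal subgroup, shows that $\langle\langle s_i \rangle\rangle^G$ is the normal closure in $H$ of the set $Z_i$ of conjugates of $s_i$ by a left transversal $T_i$ of $H\langle a_i \rangle$ in $G$; this transversal has size $|G:H|/o(\psi(a_i),G/H)$, where $\psi:G\to G/H$. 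Taking $S' = \bigcup_i Z_i$, we get $\langle\langle S' \rangle\rangle^H = M$, so $relsize(M;H,K\cap H) \leq relsize(S';H,K\cap H)$.

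Next I would estimate $relsize(S';H,K\cap H) = \sum_i \sum_{z \in Z_i} \nu(z;H,K\cap H)^{-1}$. Each $z \in Z_i$ is a $G$-conjugate of $s_i$, hence has the same $\nu(\,\cdot\,;G,K)$ (conjugation by a fixed element of $G$ is an automorphism carrying $K$ to $K$), and it lies in $H$ since $M \leqslant H$; its minimal root in $G$ with respect to $K$ is the corresponding conjugate of $a_i$, whose image in $G/H$ has the same order $o(\psi(a_i),G/H)$ only up to conjugacy — actually the order of a conjugate equals the order of the element, so $o(\psi(a_i^{t}), G/H) = o(\psi(a_i),G/H)$ is not quite right; rather $\psi(t a_i t^{-1})$ is conjugate to $\psi(a_i)$ in $G/H$ and so has the same order. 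Thus by \cref{lem: prev to supermult of res def: alternative}, $\nu(z;H,K\cap H) \geq \nu(s_i;G,K)/o(\psi(a_i),G/H)$, giving $\nu(z;H,K\cap H)^{-1} \leq o(\psi(a_i),G/H)\,\nu(s_i;G,K)^{-1}$. Summing over the $|Z_i| = |G:H|/o(\psi(a_i),G/H)$ elements of $Z_i$, the factors $o(\psi(a_i),G/H)$ cancel, yielding $\sum_{z\in Z_i}\nu(z;H,K\cap H)^{-1} \leq |G:H|\,\nu(s_i;G,K)^{-1}$. Summing over $i$ gives $relsize(S';H,K\cap H) \leq |G:H|\,relsize(S;G,K)$, and taking the infimum over $S$ on the right (or letting $S$ approach the infimum) yields the claim.

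The main obstacle I anticipate is the bookkeeping around minimal roots of the conjugates $z = t s_i t^{-1}$: one must verify that $t a_i t^{-1}$ is genuinely a minimal root of $z$ in $G$ with respect to $K$ — i.e. that $\nu(t s_i t^{-1}; G, K) = \nu(s_i; G, K)$ and the supremum is still attained — which follows because $x \mapsto t x t^{-1}$ is an automorphism of $G$ fixing the normal subgroup $K$ setwise, hence preserves both the relation $a^n = g$ (when $g\notin K$) and the condition "$o(\psi(a),G/K)$ is the order of a root" (when $g\in K$). A secondary subtlety is ensuring \cref{lem: prev to supermult of res def: alternative} applies to each $z$ with the correct order in $G/H$; here one uses that conjugate elements of a group have equal order, so $o(\psi(ta_it^{-1}),G/H)$ equals $o(\psi(a_i),G/H)$ regardless of which coset representative $t$ is chosen — this is exactly what makes the cancellation in the sum over $Z_i$ work out cleanly. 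Once these points are nailed down, the computation is the routine one sketched above.
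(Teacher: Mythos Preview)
Your argument is correct and follows the same architecture as the paper's: take a finite normal generating set $S$ for $M$ in $G$, use \cref{lemma} to produce a normal generating set $S'$ for $M$ in $H$ consisting of $G$-conjugates, bound each $\nu(z;H,K\cap H)^{-1}$ via \cref{lem: prev to supermult of res def: alternative}, sum, and pass to the infimum, with \cref{new lemma} disposing of the infinite-$\nu$ terms. The one substantive difference is the subgroup you feed into \cref{lemma}: you use the cyclic group $\langle a_i\rangle$, obtaining $|Z_i|=|G:H|/l_i$ with $l_i=o(\psi(a_i),G/H)$, so the factor $l_i$ coming from \cref{lem: prev to supermult of res def: alternative} cancels exactly. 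The paper instead uses the full centraliser $C_G(s_i)$, so that $|Z_i|=|G:H|/k_i$ with $k_i=|C_G(s_i):C_H(s_i)|$, and must then prove the auxiliary inequality $l_i\le k_i$ to close the estimate. Your route is marginally cleaner for the theorem itself, but the paper's choice is deliberate: the ratio $l_i/k_i$ is precisely the quantity analysed in Section~5, where a strict inequality $l_i/k_i<1$ is what makes \cref{eq: submult of relsize} strict and powers \cref{thm: resdef equal to one}. One small point to tidy in your write-up: for those $i$ with $\nu(s_i;G,K)$ infinite there is no minimal root with which to form $\langle a_i\rangle$, so for such $i$ just conjugate $s_i$ by a full transversal of $H$ in $G$ (equivalently take $a_i=s_i$); by \cref{new lemma} together with the conjugation-invariance you already noted, all these conjugates contribute zero to $relsize(S';H,K\cap H)$, so the inequality is unaffected.
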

\begin{proof}
Let $S=\{s_1,\ldots,s_d\}\subseteq G$ generate $M$ as a normal subgroup in $G$. First suppose that the relative size of $s_i$ in $G$ with respect to $K$ is greater than zero for all $i$, $1\leq i\leq d$. Hence, consider the set $A=\{a_1,\ldots,a_d\}$, where $a_i$ is a minimal
root of $s_i$ in $G$ with respect to $K$. Denote the centraliser of $s_i$ in $G$ by
$C_{G}(s_i)$. Let $k_i=|C_{G}(s_i):C_H(s_i)|$ and
$l_i=o(\psi(a_i),G/H)$, where $\psi$  is the canonical surjective
map from $G$ to $G/H$. 
\vskip 2mm

Let $n_i\in \mathbb{N}$ be such that $a_i^{n_i}=s_i$, where $s_i\in H$. Then for all $t\in \mathbb{Z}$, $a_i^{t}$ commutes with $s_i$ and hence the subgroup in $G$ generated by $a_i$ is in $C_G(s_i)$ for all $i$, $1\leq i\leq d$. Moreover, since $l_i=o(\psi(a_i),G/H)$, then $a_i^{l_i}\in H\cap C_G(s_i)$ and $a_i^r\notin C_H(s_i)$ for all $r$, $1\leq r< l_i$. This means that if $r$ and $r'$ are integers such that $1\leq r,r'<l_i$, with $r\neq r'$, then $a^r$ and $a^{r'}$ cannot be in the same coset of $C_H(s_i)$ in $C_G(s_i)$. Therefore, for each $i$, $1\leq i\leq d$, there are at most $l_i$ distinct cosets of $C_H(s_i)$ in $C_G(s_i)$ which contain an element of the form $a_i^{t}$, where $1\leq t\leq l_i$. Hence, $l_i\leq k_i$, for all $i$, $1\leq i\leq d$.
\vskip 2mm

Obtain, as in \cref{lemma}, a set $S'=\{s_{ij}\}$ of generators for $M$ as a normal subgroup of
$H$, where $i=1,\ldots,d$. Note that for each $i$ the index $j$ ranges from $1$ to $m/k_i$, where $m=|G:H|$. 
\vskip 2mm

For each $i$, the set $\{s_{ij}\}$ is obtained by conjugating $s_i$ by the elements in a transversal of $C_G(s_i)$ in $G$. Therefore, for each $i$, $s_{ij}$ and $s_{ij'}$ are conjugate to one another in $G$. Since both $K$ and $H$ are normal in $G$, the order of the image of $a_i$ in $G/H$ and the order of the image of $a_i$ in $G/K$ are invariant under conjugation. Moreover, if $a_i$ is a minimal root of $s_i$ with respect to $K$ and $b$ is an element in a transversal of $C_G(s_i)$ in $G$, then $ba_ib^{-1}$ is a minimal root of $bs_ib^{-1}$ with respect to $K$. Therefore, for each $i$, $\nu(s_{ij};H,K\cap H)=\nu(s_{ij'};H,K\cap H)$ for all $j$ and $j'$ such that $1\leq j,j'\leq m/k_i$. From this and \cref{lem: prev to supermult of res def: alternative} the following holds
\[
relsize(S';H,K\cap H)=\sum_{i=1}^{d}\sum_{j=1}^{m/k_i}\nu(s_{ij};H,K\cap H)^{-1}=\sum_{i=1}^{d}\dfrac{|G:H|}{k_i}\nu(s_i;H,K\cap H)^{-1}\leq
\]
\begin{equation}\label{eq: thm rel size ineq}
\sum_{i=1}^{d}\dfrac{|G:H|}{k_i}l_i\nu(s_i;G,K)^{-1}\leq
|G:H|\sum_{i=1}^{d}\nu(s_i;G,K)^{-1}= 
\end{equation}

\[
|G:H|relsize(S;G,K).
\]
\vskip 1mm

Hence, if $S$ generates $M$ as a normal subgroup of $G$, then $S'$ generates $M$ as a normal
subgroup of $H$, and $relsize(S';H,K\cap H)\leq |G:H|relsize(S;G,K)$.
Therefore,
\[
relsize(M;H,K\cap H)\leq relsize(S';H,K\cap H)\leq |G:H|relsize(S;G,K),
\]
for all finite generating sets $S$. The result is then obtained by
considering the infimum over all finite generating sets $S$ on the right hand side of the
previous inequalities.
\vskip 2mm

Now let us consider the situation when $s_i$ has zero relative size in $G$ with respect to $K$ for some $i$, $1\leq i\leq d$. By \cref{new lemma} it has zero relative size in $H$ with respect to $H\cap K$. Moreover, as $H$ and $K$ are normal in $G$, and order is preserved under conjugation, then any conjugate of $s_i$ by an element in $G$ also has zero relative size in $G$ and $H$ with respect to $K$. Therefore, in \cref{eq: thm rel size ineq}, if $s_i$ has zero relative size in $H$ with respect to $K$, then $s_{ij}$, where $1\leq j\leq m/k_i$, also has zero relative size in $H$ with respect to $H\cap K$, and hence they do not need to be considered in the computation.

\end{proof}
\vskip 2mm

We now prove that residual deficiency minus one is supermultiplicative.
\vskip 2mm

\begin{thm}\label{thm: supermult rdef}
Let $G$ be a finitely presented group and $H$ a finite index
normal subgroup of $G$. Then
\begin{equation}\label{eq: thm supermult rdef}
rdef(G)-1\leq \dfrac{rdef(H)-1}{|G:H|}.
\end{equation}
\end{thm}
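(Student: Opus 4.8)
The plan is to reduce the statement to \cref{thm: res size inequality: alternative} via the Reidemeister-Schreier construction, in the same spirit as the proof of \cref{deficiency for finite index subgroups}. Fix an arbitrary finite presentation $Q=\langle X\mid R\rangle$ of $G$, with $X=\{x_1,\dots,x_n\}$ freely generating $F_n$ and $\varphi\colon F_n\to G$ the canonical map; write $N=\ker\varphi=\langle\langle R\rangle\rangle^{F_n}$ and $\overline{R}_G=\varphi^{-1}(R_G)$. Set $\overline{H}=\varphi^{-1}(H)$, which is a finite index normal subgroup of $F_n$ with $|F_n:\overline{H}|=|G:H|$ and $N\subseteq\overline{H}$. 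By Nielsen-Schreier, $\overline{H}$ is free of rank $|G:H|(n-1)+1$; fix such a free generating set $X'$. For any finite subset $S'\subseteq\overline{H}$ that normally generates $N$ in $\overline{H}$, the pair $Q'=\langle X'\mid S'\rangle$ is a presentation of $\overline{H}/N\cong H$ whose canonical map is $\varphi|_{\overline{H}}$. I would then bound $rdef(Q')$ from below and let $S'$ vary.

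The first step is to identify the finite residual. Since $H$ has finite index in $G$, a subgroup of $H$ is finite index in $H$ if and only if it is finite index in $G$, and $R_G$ lies in every finite index subgroup of $G$; intersecting the two defining families gives $R_H=R_G$. Hence for the presentation $Q'$ we get $\overline{R}_H:=(\varphi|_{\overline{H}})^{-1}(R_H)=\overline{H}\cap\overline{R}_G$. As $S'\subseteq N\subseteq\overline{R}_H$, the reformulation of residual deficiency in terms of $relsize$ given after \cref{def: rel size of S in G wrt K} yields $rdef(Q')=|X'|-relsize(S';\overline{H},\overline{H}\cap\overline{R}_G)$, and therefore
\[
rdef(H)\ \ge\ \sup_{S'}rdef(Q')\ =\ |X'|-relsize(N;\overline{H},\overline{H}\cap\overline{R}_G),
\]
where the infimum implicit on the right runs over all finite normal generating sets $S'$ of $N$ in $\overline{H}$.

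The second step is a direct appeal to \cref{thm: res size inequality: alternative} inside $F_n$, applied with $M=N$, $K=\overline{R}_G$ and the finite index normal subgroup $\overline{H}\supseteq N$:
\[
relsize(N;\overline{H},\overline{H}\cap\overline{R}_G)\ \le\ |G:H|\,relsize(N;F_n,\overline{R}_G)\ \le\ |G:H|\,relsize(R;F_n,\overline{R}_G),
\]
the last inequality because $R$ is one of the finite normal generating sets of $N$ in $F_n$ over which the infimum defining $relsize(N;F_n,\overline{R}_G)$ is taken. Since $relsize(R;F_n,\overline{R}_G)=n-rdef(Q)$ and $|X'|=|G:H|(n-1)+1$, combining the two displays gives
\[
rdef(H)\ \ge\ 1+|G:H|\bigl(rdef(Q)-1\bigr),
\]
that is, $rdef(Q)-1\le\bigl(rdef(H)-1\bigr)/|G:H|$. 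Taking the supremum over all finite presentations $Q$ of $G$ yields \cref{eq: thm supermult rdef}.

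The routine parts are the Reidemeister-Schreier bookkeeping and keeping track of the suprema and infima in the definitions of $rdef$ and $relsize$. The one genuinely delicate point is the equality $\overline{R}_H=\overline{H}\cap\overline{R}_G$, which rests on the fact that a group and its finite index normal subgroups share the same finite residual; once that identification and \cref{thm: res size inequality: alternative} are in hand, the remainder is substitution.
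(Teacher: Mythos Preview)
Your proof is correct and follows essentially the same route as the paper: Reidemeister--Schreier applied to an arbitrary presentation of $G$, then \cref{thm: res size inequality: alternative} inside $F_n$ with $K=\overline{R}_G$, followed by a supremum over presentations. The only differences are cosmetic: you invoke the full statement of \cref{thm: res size inequality: alternative} (passing through the infimum $relsize(N;\cdot,\cdot)$ over all normal generating sets of $N$), whereas the paper appeals to the intermediate inequality \eqref{eq: thm rel size ineq} for the specific Reidemeister--Schreier generating set; and you make explicit the identification $R_H=R_G$ (hence $\overline{R}_H=\overline{R}_G\cap\overline{H}=\overline{R}_G$), which the paper uses silently when it writes $rdef(Q)$ with $\overline{R}_G$ in place of $\overline{R}_H$.
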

\begin{proof}
Let $P=\langle X|T\rangle$ be a finite presentation for $G$, where $n=|X|$, and let $Q=\langle Y|S\rangle$
be the presentation for $H$ obtained by applying the Reidemeister-Schreier rewriting process to $P$. Suppose $Y$ freely generates $F_k$. Note $k=|G:H|(n-1)+1$. By \cref{eq: thm rel size ineq} in
\cref{thm: res size inequality: alternative}
\[
relsize(S;F_k,\overline{R}_G)\leq |G:H|relsize(T;F_n,\overline{R}_G).
\]
Note that $\overline{R}_G\leqslant F_k$ in the previous inequality.
Therefore,
\[
|G:H|(rdef(P)-1)=|G:H|(n-relsize(T;F_n,\overline{R}_G)-1)=
\]
\[
|G:H|(n-1)-|G:H|relsize(T;F_n,\overline{R}_G)\leq |G:H|(n-1)-relsize(S;F_k,\overline{R}_G)=
\]
\[
|G:H|(n-1)+1-relsize(S;F_k,\overline{R}_G)-1= rdef(Q)-1\leq rdef(H)-1.
\]
The result follows by taking the supremum over all finite
presentations $P$ of $G$ on the left hand side of the previous
inequalities.
\end{proof}
%\begin{rem}\label{rem: residual def grad}

By \cref{thm: supermult rdef} the residual deficiency of a group is supermultiplicative on finite index
normal subgroups. Therefore, following the spirit in Section $1$, we define the \emph{residual deficiency gradient} of $G$ by
\[
RDG(G)=\underset{H\underset{f}{\trianglelefteq}G }{\text{sup}}
\left\{ \dfrac{rdef(H)-1}{|G:H|} \right\},
\]
which by \cref{prop: 1 section 2} gives a multiplicative invariant and hence a generalised Euler characteristic.

\vskip 2mm

However, by \cref{cor positive res def}, the residual deficiency gradient is the same as the deficiency gradient. One inequality is straightforward; it suffices to notice that for every finitely presented group $G$, $def(G)\leq rdef(G)$. This then implies that $DG(G)\leq RDG(G)$. By \cref{cor positive res def}, 
$rdef(G)-1\leq DG(G)$. Therefore, if $N$ is a finite index normal subgroup of $G$, then
\[
\dfrac{rdef(N)-1}{|G:N|}\leq \dfrac{DG(N)}{|G:N|}=DG(G).
\]
Taking the supremum on the left hand side, over all finite index normal subgroups of $G$, gives $RDG(G)\leq DG(G)$.

\section{The residual deficiency of quotients}

The following two results give lower bounds for the residual deficiency of quotients of a finitely presented group $G$.

\begin{prop}\label{prop: rdef qt fin order}
Let $G$ be a finitely presented group with non-trivial finite residual $R_G$. Say $s_1,\ldots,s_m$ are non-trivial elements in $G$ contained in $R_G$. Denote by $H$ the normal subgroup in $G$ generated by $S=\{s_1,\ldots,s_m\}$. Then
\[
rdef(G/H)\geq rdef(G)-relsize(S;G,R_G)
\]
\end{prop}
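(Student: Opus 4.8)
The plan is to establish, for every $\varepsilon>0$, that $G/H$ admits a finite presentation $Q$ with $rdef(Q)\geq rdef(G)-relsize(S;G,R_G)-\varepsilon$; since $G/H$ is finitely presented (it is the quotient of $G$ by the normal closure of a finite set) and $rdef(G/H)$ is by definition the supremum of $rdef$ over finite presentations of $G/H$, letting $\varepsilon\to 0$ then gives the result. The presentation $Q$ will be obtained by adjoining carefully chosen power relators to a near-optimal presentation of $G$.

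The structural fact behind the estimate is the inclusion $\overline{R}_{G/H}\leq\overline{R}_G$ of normal subgroups of $F_n$, valid for any finite presentation $\langle X\mid T\rangle$ of $G$ with $n=|X|$ and canonical map $\varphi\colon F_n\to G$. Indeed, each $s_i$ lies in $R_G$ and $R_G\trianglelefteq G$, so $H\leq R_G$; hence $G/H$ surjects onto $G/R_G$, which is residually finite, so the finite residual $R_{G/H}$ is contained in the kernel $R_G/H$ of that surjection; and pulling back along the surjection $F_n\to G/H$, whose preimage of $R_G/H$ is precisely $\overline{R}_G$ since $H\leq R_G$, yields the inclusion. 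Let $\psi\colon G\to G/R_G$ be the quotient map and $\theta=\psi\circ\varphi\colon F_n\to G/R_G$; then $\ker\theta=\overline{R}_G$, so both $\theta$ and the quotient map $F_n\to F_n/\overline{R}_G$ factor through $F_n/\overline{R}_{G/H}$. Consequently, for every $\omega\in F_n$ the order of $\omega$ in $F_n/\overline{R}_G$ is at most its order in $F_n/\overline{R}_{G/H}$, and likewise the order of $\theta(\omega)$ in $G/R_G$ is at most the order of $\omega$ in $F_n/\overline{R}_{G/H}$.

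Now I would fix a finite presentation $P=\langle X\mid T\rangle$ of $G$ with $rdef(P)\geq rdef(G)-\varepsilon/2$, writing $T=\{r_1=u_1^{m_1},\ldots,r_s=u_s^{m_s}\}$ with each $u_j$ not a proper power, as in \cref{def: residual def}. Since $r_j$ maps to $1$ in $G$ and hence in $G/H$, we have $r_j\in\overline{R}_{G/H}$, so each $o(u_j,F_n/\overline{R}_{G/H})$ is finite and, by the previous paragraph, $\sum_{j=1}^{s}o(u_j,F_n/\overline{R}_{G/H})^{-1}\leq\sum_{j=1}^{s}o(u_j,F_n/\overline{R}_G)^{-1}=n-rdef(P)$. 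For each $i$ I would then choose a root $a_i$ of $s_i$ in $G$: if $\nu(s_i;G,R_G)<\infty$, take a minimal root of $s_i$ with respect to $R_G$, so that $a_i^{n_i}=s_i$ for some nonzero integer $n_i$ and $o(\psi(a_i),G/R_G)=\nu(s_i;G,R_G)$; if $\nu(s_i;G,R_G)=\infty$, use \cref{def: relsize} to pick $a_i$ with $a_i^{n_i}=s_i$ for some nonzero integer $n_i$ and $o(\psi(a_i),G/R_G)\geq N$, where $N$ is any fixed integer satisfying $m/N\leq\varepsilon/2$. Lifting each $a_i$ to a word $b_i\in F_n$ with $\varphi(b_i)=a_i$, we get $\varphi(b_i^{n_i})=s_i$, so, as $\langle\langle s_1,\ldots,s_m\rangle\rangle^{G}=H$, the presentation $Q=\langle X\mid T\cup\{b_1^{n_1},\ldots,b_m^{n_m}\}\rangle$ defines $G/H$. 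Write each $b_i^{n_i}$ as a power of an element $c_i\in F_n$ that is not a proper power (so $c_i$ is the maximal root of $b_i$, and $b_i$ is a power of $c_i$). By \cref{def: residual def},
\[
rdef(Q)=n-\sum_{j=1}^{s}o(u_j,F_n/\overline{R}_{G/H})^{-1}-\sum_{i=1}^{m}o(c_i,F_n/\overline{R}_{G/H})^{-1},
\]
and since $b_i$ is a power of $c_i$ and $\theta$ factors through $F_n/\overline{R}_{G/H}$,
\[
o(c_i,F_n/\overline{R}_{G/H})\ \geq\ o(b_i,F_n/\overline{R}_{G/H})\ \geq\ o(\theta(b_i),G/R_G)\ =\ o(\psi(a_i),G/R_G).
\]
Therefore $\sum_{i=1}^{m}o(c_i,F_n/\overline{R}_{G/H})^{-1}\leq\sum_{\nu(s_i;G,R_G)<\infty}\nu(s_i;G,R_G)^{-1}+\sum_{\nu(s_i;G,R_G)=\infty}N^{-1}\leq relsize(S;G,R_G)+m/N\leq relsize(S;G,R_G)+\varepsilon/2$, and combining this with the bound on the $u_j$ terms gives
\[
rdef(Q)\ \geq\ n-(n-rdef(P))-\bigl(relsize(S;G,R_G)+\varepsilon/2\bigr)\ \geq\ rdef(G)-relsize(S;G,R_G)-\varepsilon .
\]

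The main obstacle is the presence of indices $i$ with $\nu(s_i;G,R_G)=\infty$, i.e.\ for which $s_i$ has relative size zero in $G$ with respect to $R_G$: for these no single presentation of $G/H$ attains the target bound, which is precisely why the argument must produce a whole family of presentations and appeal to the supremum in the definition of $rdef(G/H)$. Apart from that, everything is the routine bookkeeping of how orders behave along the surjections $F_n/\overline{R}_{G/H}\twoheadrightarrow F_n/\overline{R}_G\cong G/R_G$, together with the observation that $\langle X\mid T\cup\{b_1^{n_1},\ldots,b_m^{n_m}\}\rangle$ really presents $G/H$.
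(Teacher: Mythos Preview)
Your proof is correct and follows essentially the same strategy as the paper: build a presentation of $G/H$ by adjoining to a presentation of $G$ power relators coming from roots of the $s_i$ (minimal roots when $\nu(s_i;G,R_G)$ is finite, roots of arbitrarily large order in $G/R_G$ otherwise), then bound the resulting residual deficiency and let $\varepsilon\to 0$. Two places where your write-up is actually a bit tidier than the paper's: you isolate and prove the key structural fact $\overline{R}_{G/H}\leq\overline{R}_G$ explicitly (the paper silently uses the stronger equality $\overline{R}_{G/H}=\overline{R}_G$, which does hold here because $H\leq R_G$ forces a bijection between finite-index subgroups of $G$ and of $G/H$, but gives no justification), and by passing to the maximal root $c_i$ of $b_i^{n_i}$ and using the chain $o(c_i)\geq o(b_i)\geq o(\psi(a_i))$ you sidestep the paper's argument that a lift of a minimal root cannot be a proper power.
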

\begin{proof}
Let $P=\langle X|R\rangle$ be a finite presentation for $G$, where $X$
freely generates $F_d$, the non-abelian free group of rank $d$. First, separate $S$ into two disjoint subsets $S_1$ and $S_2$, where the elements in $S_1=\{s_1\ldots,s_k\}$ have relative size in $G$ with respect to $R_G$ greater than zero, and the ones in $S_2=\{s_{k+1},\ldots,s_m\}$ do not. Let $a_1,\ldots,a_k$ be minimal roots in $G$ with respect to $R_G$ for $s_1,\ldots,s_k$, respectively. Moreover, let $q_1,\ldots,q_k\in\mathbb{N}$ be such that $a_i^{q_i}=s_i$, for $1\leq i\leq k$.
\vskip 2mm

Consider $w_1,\ldots,w_k\in F_d$ such that $\varphi(w_1)=a_1,\ldots,\varphi(w_k)=a_k$, for all $1\leq i\leq k$, where $\varphi$ is the canonical map going from $F_d$ to $G$. If $w_i=u_i^{r}$, for $r>1$, then $\varphi(u_i)^{r}=\varphi(u_i^{r})=\varphi(w_i)=a_i$, which would imply that $a_i$ is not a minimal root for $s_i$. Therefore, $w_i$ cannot be expressed as a proper power of any other element in $F_d$. Hence, if $W_1=\{ w_1^{q_1},\ldots,w_k^{q_k} \}$, then
\[
relsize(S_1;G,R_G)=relsize(W_1;F_d,\overline{R}_G).
\]
\vskip 2mm

If $Q$ is a finite presentation for $G/H$, then by the definition of residual deficiency, we have $rdef(Q)\leq rdef(G/H)$. We claim that for every $\varepsilon>0$, there is a finite presentation $Q_{\varepsilon}$ for $G/H$, such that
\[
rdef(Q_{\varepsilon})+\varepsilon> rdef(P)-relsize(S;G,R_G).
\]
This would then imply
\[
rdef(G/H)+\varepsilon> rdef(P)-relsize(S;G,R_G),
\]
for all $\varepsilon>0$, which means
\[
rdef(G/H)\geq rdef(P)-relsize(S;G,R_G),
\]
for all finite presentations $P$ of $G$. The result would then be obtained by taking the supremum over all finite presentations for $G$.

\vskip 2mm

Let $a_{k+1},\ldots,a_m\in G$ be such that $o(\psi(a_j),G/R_G)>(m-k)\varepsilon^{-1}$, for $k+1\leq j\leq m$, where $\psi:G\longrightarrow G/R_G$. Such $a_j$ exist since $s_{k+1},\ldots,s_m$ have zero relative size in $G$ with respect to $R_G$.
\vskip 2mm

Suppose $a_j^{q_j}=s_j$, for $k+1\leq j\leq m$. Let $W_2=\{w_{k+1}^{q_{k+1}},\ldots,w_m^{q_m}\}$ be a subset of $F_d$ such that $\varphi(w_j)=a_j$. Then $\varphi(w_j^{q_j})=\varphi(w_j)^{q_j}=s_j$, for $k+1\leq j\leq m$. Hence, $Q_{\varepsilon}=\langle X\mid R,W_1,W_2  \rangle$ is a finite presentation for $G/H$.
\vskip 2mm

First note that
\[
rdef(Q_{\varepsilon})=rdef(P)-relsize(W_1\cup W_2;F_d,\overline{R}_G).
\]
Second, that
\[
relsize(W_1\cup W_2;F_d,\overline{R}_G)=relsize(W_1;F_d,\overline{R}_G)+relsize(W_2;F_d,\overline{R}_G).
\]
We already have that $relsize(W_1;F_d,\overline{R}_G)=relsize(S_1;G,R_G)$. Moreover, given how we chose $a_j$ for $k+1\leq j\leq m$, we have
\[
relsize(W_2;F_d,\overline{R}_G)\leq \sum_{j=k+1}^{m}o(\psi(a_j),G/R_G)^{-1}<\sum_{j=k+1}^{m}\dfrac{\varepsilon}{m-k}=\varepsilon.
\]
Therefore,
\[
rdef(Q_{\varepsilon})+\varepsilon=rdef(P)+\varepsilon-relsize(S_1;G,R_G)-relsize(W_2;F_d,\overline{R}_G)>
\]
\[
rdef(P)+\varepsilon-relsize(S_1;G,R_G)-\varepsilon.
\]
Since 
\[
relsize(S;G,R_G)=relsize(S_1,G,R_G)+relsize(S_2,G,R_G),
\]
and $relsize(S_2,G,R_G)=0$, we obtain
\[
rdef(Q_{\varepsilon})+\varepsilon >rdef(P)-relsize(S;G,R_G).
\]
\end{proof}
\vskip 2mm

\begin{thm}\label{thm: rdef qt inf order}
Let $G$ be a finitely presented group. Let $g_1,\ldots,g_m$ be
non-trivial elements in $G$ with infinite order in $G/R_G$. Denote by $H_n$
the normal subgroup in $G$ generated by $g_1^{n},\ldots,g_m^{n}$. Then given $\varepsilon>0$, there exists a natural
number $k$, such that for all $q\in \mathbb{Z}\backslash\{0\}$, we have
\[
rdef(G/H_{kq})\geq rdef(G)-\varepsilon.
\]
\end{thm}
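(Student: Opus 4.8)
The plan is to follow the strategy of \cref{prop: rdef qt fin order}, exhibiting, for a well chosen $k$, explicit finite presentations of the groups $G/H_{kq}$ whose residual deficiency is close to $rdef(G)$. Since $rdef(G)$ is finite (by \cref{cor: rdef DG PDG and RG ineq} one has $rdef(G)-1\le RG(G)\le d(G)-1$), fix a finite presentation $P=\langle X\mid R\rangle$ of $G$ with $rdef(P)>rdef(G)-\varepsilon/2$; put $d=|X|$ and let $\varphi\colon F_d\to G$ be the canonical map. For each $i$ choose $w_i\in F_d$ with $\varphi(w_i)=g_i$ and write $w_i=u_i^{r_i}$ with $u_i$ not a proper power in $F_d$; as $\varphi(u_i)^{r_i}=g_i$ has infinite order in $G/R_G$, so does $\varphi(u_i)$. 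For $k,q\in\mathbb{N}$ the presentation $Q_q=\langle X\mid R,u_1^{r_1kq},\dots,u_m^{r_mkq}\rangle$ presents $G/H_{kq}$ (its added relators $w_i^{kq}$ map to $g_i^{kq}$), so by the $relsize$ reformulation of \cref{def: residual def},
\[
rdef(G/H_{kq})\ \ge\ rdef(Q_q)\ =\ d-relsize(R;F_d,\overline{R}_{G/H_{kq}})-\sum_{i=1}^{m}\nu\big(u_i^{r_ikq};F_d,\overline{R}_{G/H_{kq}}\big)^{-1},
\]
where $\overline{R}_{G/H_{kq}}$ is the preimage in $F_d$ of the finite residual of $G/H_{kq}$. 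It therefore suffices to pick $k=k(\varepsilon,P)$ so that, for every $q\ne 0$, the first $relsize$ term equals $relsize(R;F_d,\overline{R}_G)=d-rdef(P)$ and the remaining sum is $<\varepsilon/2$.

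The key auxiliary observation is a ``sandwich'' on the residual quotient of $G/H_{kq}$. Put $\Gamma=G/R_G$ and let $\tilde R_{kq}\trianglelefteq G$ be the preimage of the finite residual of $G/H_{kq}$, so that $G/\tilde R_{kq}\cong F_d/\overline{R}_{G/H_{kq}}$ is the residual quotient of $G/H_{kq}$. Every finite quotient of $G/H_{kq}$ is a finite quotient of $G$ and hence annihilates $R_G$, so $R_G\le\tilde R_{kq}$ and $G/\tilde R_{kq}$ is a quotient of $\Gamma$. Conversely, given $L\trianglelefteq_f\Gamma$ with preimage $L'\trianglelefteq_f G$, if $H_{kq}\le L'$ then $G/L'$ is a finite quotient of $G/H_{kq}$, whence $\tilde R_{kq}\le L'$; thus we obtain surjections $\Gamma\twoheadrightarrow G/\tilde R_{kq}\twoheadrightarrow G/L'\cong\Gamma/L$. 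Moreover $H_{kq}\le L'$ holds for \emph{every} $q$ as soon as $k$ is a multiple of $o(g_i,\Gamma/L)$ for all $i$, since then $g_i^{kq}\in L'$ and $L'$ is normal, so $H_{kq}=\langle\langle g_1^{kq},\dots,g_m^{kq}\rangle\rangle^G\le L'$.

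Now make the choices. Using residual finiteness of $\Gamma$, pick $L\trianglelefteq_f\Gamma$ small enough that (a) the images in $\Gamma/L$ of the finitely many non-trivial elements $\varphi(u)^j$, where $u^m$ ranges over $R$ and $1\le j<o(\varphi(u),\Gamma)$, remain non-trivial, and (b) $o(\varphi(u_i),\Gamma/L)>2m/\varepsilon$ for every $i$ (possible since each $\varphi(u_i)$ has infinite order in $\Gamma$). Set $k=\operatorname{lcm}_i o(g_i,\Gamma/L)$ and fix $q\ne 0$ (we may take $q>0$, as $H_{kq}=H_{k|q|}$); the sandwich of the previous paragraph then applies. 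Orders of elements can only divide, hence decrease, along surjections, so the order of each relator root $u$ of $R$ in $G/\tilde R_{kq}$ divides its order in $\Gamma$; by (a), its powers $\varphi(u)^j$ with $1\le j<o(\varphi(u),\Gamma)$ stay non-trivial as far down as $\Gamma/L$, hence already in $G/\tilde R_{kq}$, so in fact $u$ has the same order in $G/\tilde R_{kq}$ as in $\Gamma$; therefore $relsize(R;F_d,\overline{R}_{G/H_{kq}})=relsize(R;F_d,\overline{R}_G)$. For the new relators, $u_i^{r_ikq}\in\overline{R}_{G/H_{kq}}$ (it maps to $g_i^{kq}\in H_{kq}$), and since $u_i$ is not a proper power, $\nu(u_i^{r_ikq};F_d,\overline{R}_{G/H_{kq}})$ equals the order of $\varphi(u_i)$ in $G/\tilde R_{kq}$, which by the sandwich is at least $o(\varphi(u_i),\Gamma/L)>2m/\varepsilon$. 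Hence $\sum_i\nu(u_i^{r_ikq};F_d,\overline{R}_{G/H_{kq}})^{-1}<\varepsilon/2$, and combining the two estimates gives $rdef(G/H_{kq})\ge rdef(Q_q)\ge rdef(P)-\varepsilon/2>rdef(G)-\varepsilon$ for all $q\ne 0$.

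The main obstacle is the control of the residual quotient of $G/H_{kq}$: unlike in \cref{prop: rdef qt fin order}, where $H\le R_G$ and the ambient residual subgroup does not move, passing to $G/H_{kq}$ genuinely alters it and one cannot pin it down exactly (a priori it is only some quotient of $G/R_GH_{kq}$, itself a quotient of $\Gamma$). The sandwich $\Gamma\twoheadrightarrow G/\tilde R_{kq}\twoheadrightarrow\Gamma/L$ is the device reconciling the two competing demands --- keeping the orders of the old relator roots unchanged, and forcing the new relator roots to have large order --- while still letting $k$ be chosen once and for all independently of $q$, since the condition ``$H_{kq}\le L'$'' only asks $k$ to absorb the fixed orders of the $g_i$ modulo $L$. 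A minor point to verify en route is that the relators of $Q_q$ are pairwise distinct, so that $relsize$ of the whole relator set splits as claimed; this is immediate, as $\varphi(u_i^{r_ikq})=g_i^{kq}\ne 1$ while $\varphi(r)=1$ for $r\in R$, and any repetitions among the $u_i^{r_ikq}$ would only lower $relsize$.
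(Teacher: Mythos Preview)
Your proof is correct and follows essentially the same approach as the paper's: both construct a single finite quotient of $G$ (your $\Gamma/L$, the paper's $F_d/M$) that simultaneously preserves the orders of the roots of the old relators and forces the roots of the new relators $w_j$ to have large order, then set $k$ to be the lcm of the orders of the $g_j$ (respectively $w_j$) in that quotient so that this finite quotient factors through every $G/H_{kq}$, yielding the sandwich $\overline{R}_G\le\overline{R}_{G/H_{kq}}\le M$.  Your write-up is in fact a bit more careful than the paper's in two places: you make the sandwich $\Gamma\twoheadrightarrow G/\tilde R_{kq}\twoheadrightarrow\Gamma/L$ explicit (the paper uses it implicitly when asserting $rdef(Q)\ge rdef(P)-relsize(W;\cdot)$), and you handle the passage from $rdef(P)$ to $rdef(G)$ via an $\varepsilon/2$-approximation rather than the paper's somewhat loose ``take the supremum over $P$'' (which is problematic as stated, since $k$ depends on $P$).
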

\begin{proof}
Let $P=\langle X|R\rangle$ be a finite presentation for $G$, where $X$ generates $F_d$ the non-abelian free group of rank $d$. Let $R=\{u_1^{a_1},\ldots,u_r^{a_r}\}$, where $u_i$ cannot be expressed as a proper power of any other element in $F_d$, for all $i$, $1\leq i\leq r$. Denote by $l_i$ the order of $u_i$ in $F_d/\overline{R}_G$. 
\vskip 2mm

As $F_d/\overline{R}_G$ is residually finite, there is a finite index normal subgroup $K$ of $F_d$, which contains $\overline{R}_G$, such that the order of $u_i$ in $F_d/K$ is $l_i$, for all $i$, $1\leq i\leq r$.

\vskip 2mm

Let $T$ be a positive natural number. Denote by $w_j$ an element in $F_d$ which corresponds to $g_j$ under $\varphi$, the canonical homomorphism from $F_d$ to $G$. As the order of $g_j$ in $G/R_G$ is infinite, so is the order of $w_j$ in $F_d/\overline{R}_G$. Therefore, there is a finite index normal subgroup $L_j$ of $F_d$, which contains $\overline{R}_G$, such that $w_j^{n}$ is not in $L_j$ for all $n\leq T$. Note that this can be done for all $j$, $1\leq j\leq m$. 
\vskip 2mm

Take $M$ the intersection of $K$ and $\bigcap_{j=1}^{m} L_j$. The subgroup $M$ is a finite index normal subgroup of $F_d$ and contains $\overline{R}_G$. Also, the order of $u_i$ in $F_d/M$ is $l_i$, for all $i$, $1\leq i\leq r$. Consider $k_j$ the order of $w_j$ in $F_d/M$. Then, for every multiple $q$ of $k_j$, $w_j^{q}\in M$. Therefore, if $k$ is the minimum common multiple of $k_1,\ldots,k_m$, then $w_j^k\in M$ for all $j$, $1\leq j\leq m$.
\vskip 2mm

Consider $Q=\langle X\mid R,w_1^{kq},\ldots,w_m^{kq}\rangle$, where $q\in \mathbb{Z}\backslash \{0\}$. The presentation $Q$ is a finite presentation for the group $G/H_{kq}$. If $W=\{w_1^{kq},\ldots,w_m^{kq}\}$, then since $T\leq k_j$ for all $j$, $1\leq j\leq m$, 
\[
relsize(W;F_d,\overline{R}_{G/H_{qk}})\leq \sum_{j=1}^{m}\dfrac{1}{k_j}\leq\sum_{j=1}^{m}\dfrac{1}{T}. 
\]
As $rdef(Q)\geq rdef(P)-relsize(W;F_d,\overline{R}_{G/H_{qk}})$, then 
\[
rdef(G/H_{kq})\geq rdef(Q)\geq rdef(P)-\sum_{j=1}^{m}\dfrac{1}{T}=rdef(P)-\dfrac{m}{T}.
\]
The natural number $T$ was chosen arbitrarily. Therefore, take $T$ such that $m/T<\varepsilon$. Then $rdef(P)-m/T>rdef(P)-\varepsilon$, and hence
\[
rdef(G/H_{kq})>rdef(P)-\varepsilon.
\]
The result follows by taking the supremum over all finite presentations $P$ of $G$.
\end{proof}

\vskip 2mm

\begin{cor}
Let $G$ be a finitely presented residually finite group. Let
$g_1,\ldots,g_m$ be non-trivial elements in $G$. Denote by $H_k$ the normal group in $G$ generated by
$g_1^{k},\ldots,g_m^{k}$. If $G$ has residual deficiency greater than one,
then $G/H_{k}$ has residual deficiency greater than one for an infinite collection of natural numbers $k$.
\end{cor}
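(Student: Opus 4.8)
The plan is to reduce everything to \cref{thm: rdef qt inf order}. The key initial observation is that since $G$ is residually finite, $R_G=\{e\}$, so $G/R_G\cong G$ and the order of any element in $G/R_G$ is just its order in $G$. Accordingly, I would partition the set $\{g_1,\ldots,g_m\}$ into the subset $F$ of those $g_j$ having finite order in $G$ and the subset $I$ of those having infinite order in $G$, and set $N$ to be the least common multiple of the orders of the elements of $F$ (with the convention $N=1$ if $F=\emptyset$). The point of $N$ is that whenever $k$ is a multiple of $N$, every $g_j\in F$ satisfies $g_j^k=e$, so those generators contribute nothing to $H_k$.

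If $I=\emptyset$, then for every multiple $k$ of $N$ we have $H_k=\{e\}$, hence $G/H_k=G$ has residual deficiency greater than one; since there are infinitely many such $k$, the statement holds in this case. So assume $I\neq\emptyset$, say $I=\{g_{j_1},\ldots,g_{j_t}\}$ with $t\geq 1$. Because $rdef(G)>1$, I can fix $\varepsilon>0$ with $rdef(G)-\varepsilon>1$. Applying \cref{thm: rdef qt inf order} to the infinite-order elements $g_{j_1},\ldots,g_{j_t}$ (they have infinite order in $G/R_G\cong G$, so the hypotheses are met) yields a natural number $k_0$ such that, for every $q\in\mathbb{Z}\setminus\{0\}$, the normal subgroup $H'_{k_0q}$ generated by $g_{j_1}^{\,k_0q},\ldots,g_{j_t}^{\,k_0q}$ satisfies $rdef(G/H'_{k_0q})\geq rdef(G)-\varepsilon>1$.

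To finish, let $q$ range over the positive multiples of $N$ and put $k=k_0q$. Then $k$ is a multiple of $N$, so $g_j^k=e$ for every $g_j\in F$, and therefore the normal subgroup $H_k$ generated by $g_1^k,\ldots,g_m^k$ coincides with $H'_{k_0q}$. Hence $rdef(G/H_k)=rdef(G/H'_{k_0q})>1$, and as $q$ runs through the positive multiples of $N$ the integers $k=k_0q$ form an infinite set, which gives the claim. I do not expect any serious obstacle here: the only point needing care is the bookkeeping that ensures the chosen $k$ is simultaneously of the form $k_0q$ required by \cref{thm: rdef qt inf order} and a multiple of $N$, so that the finite-order generators drop out and $H_k$ is precisely the normal subgroup to which the theorem was applied; the degenerate case $I=\emptyset$ is then handled separately as above.
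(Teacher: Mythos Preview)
Your proposal is correct and follows essentially the same approach as the paper: partition the $g_i$ by whether they have finite or infinite order in $G$ (equivalently in $G/R_G$, since $R_G$ is trivial), use the lcm of the finite orders to kill those generators, and apply \cref{thm: rdef qt inf order} to the remaining infinite-order elements. Your bookkeeping is in fact slightly cleaner than the paper's, since you explicitly let $q$ range over multiples of $N$ rather than retroactively adjusting the $k$ produced by the theorem, and you also handle the edge case $I=\emptyset$ explicitly.
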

\begin{proof}
Let $g_1,\ldots,g_j$ have finite order in $G$ while $ g_{j+1}\ldots,g_m $ infinite order. Denote by $l_1,\ldots,l_j$ the orders of $g_1,\ldots,g_j$, respectively. Let $l$ be the minimum common multiple of $l_1,\ldots,l_j$. Therefore, $g_1^{l},\ldots,g_j^{l}$ are all trivial in $G$.
\vskip 2mm

Since $g_{j+1},\ldots,g_m$ have infinite order in $G$, then by \cref{thm: rdef qt inf order} there is a $k$ such that $G/\langle\langle  g_{j+1}^{kq},\ldots,g_m^{kq}\rangle\rangle$ has residual deficiency greater than one for all naturals $q\in \mathbb{N}$.
\vskip 2mm

Take $k$ to be a multiple of $l$. Note that as $g_1^{k},\ldots,g_j^{k}$ are trivial in $G$, then as normal subgroups in $G$ we have $\langle\langle g_{j+1}^{kq},\ldots,g_m^{kq}\rangle\rangle=\langle\langle g_1^{kq},\ldots,g_m^{kq}\rangle\rangle$. The result then follows.

\end{proof}

\vskip 2mm

If the residually finite group is a non-abelian free group of finite rank, then the following lemma helps us say something more.
\vskip 2mm

\begin{lem}$(\cite{olshanskii-osin}$, Lemma 2.4$)$\label{lem: olshanski osin}\\
For any finite collection of non-trivial elements $g_1,\ldots,g_m$ of a 
non-abelian free group $F$ and any number $k\in \mathbb{N}$, there exists 
$K\in\mathbb{N}$ with the following property. For every $q\geq K$, there 
is a finite index normal subgroup $N_q\triangleleft F$ such that for all 
$1\leq i\leq m$, $g_i^s\notin N_q$ whenever $1\leq s\leq k$, but $g_i^{q}\in N_q$.
\end{lem}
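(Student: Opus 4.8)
The plan is to reduce the statement to a divisibility computation inside a free nilpotent quotient of $F$. One first reformulates what is wanted: for a fixed $q$, a finite index normal subgroup $N_q\trianglelefteq F$ has the two required properties exactly when there is a homomorphism $\pi$ from $F$ onto a finite group $P$ with $\ker\pi=N_q$, with $\pi(g_i)^{q}=1$, and with $\pi(g_i)^{s}\neq 1$ for all $i$ and all $1\le s\le k$; equivalently, the order of $\pi(g_i)$ should divide $q$ and exceed $k$. Since $F$ is torsion free, each $g_i^{s}$ with $s\ge 1$ is non-trivial, and by a classical theorem of Magnus $\bigcap_{c\ge 1}\gamma_c(F)=\{1\}$. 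I would first fix $c$ large enough that $g_i^{s}\notin\gamma_{c+1}(F)$ for all $i$ and all $1\le s\le k$, and pass to the free nilpotent group $N:=F/\gamma_{c+1}(F)$ of class $c$, writing $g_i$ again for the image of $g_i$ in $N$. The lower central factors $V_d:=\gamma_d(N)/\gamma_{d+1}(N)$ are free abelian of finite rank. For each $i$ let $d_i$ be the weight of $g_i$ in $N$ (so $g_i\in\gamma_{d_i}(N)\setminus\gamma_{d_i+1}(N)$), let $\overline{g_i}\in V_{d_i}$ be its non-zero image, and let $e_i$ be the greatest common divisor of the coordinates of $\overline{g_i}$ in some $\mathbb{Z}$-basis of $V_{d_i}$. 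Put $K:=k\cdot\max_i e_i+1$.

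Now fix $q\ge K$ and set $L_q:=\langle\langle g_1^{q},\dots,g_m^{q}\rangle\rangle\le N$ and $\Gamma_q:=N/L_q$. The key claim is that $g_i^{s}\notin L_q$ for all $i$ and all $1\le s\le k$. Granting this: $\Gamma_q$ is a finitely generated nilpotent group, hence residually finite, so one may choose a finite quotient $\psi\colon\Gamma_q\to P$ detecting the finitely many non-trivial elements $g_i^{s}$, $1\le s\le k$. Taking $N_q$ to be the kernel of the composite $F\twoheadrightarrow N\twoheadrightarrow\Gamma_q\xrightarrow{\psi}P$ finishes the proof: $N_q$ is normal of finite index, $g_i^{q}\in L_q$ gives $g_i^{q}\in N_q$, and $\psi$ detects $g_i^{s}\neq 1$, so $g_i^{s}\notin N_q$, for all $i$ and $1\le s\le k$.

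It remains to prove the key claim, and this is where the real work lies. If $g_i^{s}\in L_q$ then, as $g_i^{s}\in\gamma_{d_i}(N)$, the image $s\,\overline{g_i}$ of $g_i^{s}$ in $V_{d_i}$ lies in the image of $L_q\cap\gamma_{d_i}(N)$. Hence it suffices to show, for every $d$, that the image of $L_q\cap\gamma_{d}(N)$ in $V_{d}$ is contained in $qV_{d}$: for then $s\,\overline{g_i}\in qV_{d_i}$, i.e. $q\mid s\,e_i$, which is impossible for $q\ge K$ since $1\le s\,e_i\le k\max_i e_i<q$. Equivalently, one must show that the leading term in the associated graded Lie ring $\operatorname{gr}(N)=\bigoplus_d V_d$ of every non-trivial element of $L_q$ is divisible by $q$. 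I would reduce this to the ``top'' graded piece: an element of $L_q$ of weight $d$ maps to a non-trivial central element of the class-$d$ quotient $N/\gamma_{d+1}(N)$ which lies in the normal closure there of the $q$-th powers of the $g_j$, so it is enough to prove, for each $c$, that $L_q\cap\gamma_{c}(N)\subseteq q\,\gamma_{c}(N)$ when $N$ is free nilpotent of class $c$ (so $\gamma_c(N)$ is its centre). This last statement I would prove by induction on $c$: write an element of $L_q$ as a product of $q$-th powers $v_1^{q}\cdots v_T^{q}$ of conjugates of the $g_j^{\pm1}$, apply the Hall--Petrescu collection formula $v_1^{q}\cdots v_T^{q}=(v_1\cdots v_T)^{q}\,w_2^{\binom{q}{2}}\cdots w_c^{\binom{q}{c}}$ with $w_i\in\gamma_i(\langle v_1,\dots,v_T\rangle)$, and use the inductive hypothesis together with a careful analysis of which terms can reach $\gamma_c(N)$.

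The principal obstacle is exactly this last combinatorial point. The coefficients $\binom{q}{i}$ occurring in the collection formula are not divisible by $q$, so one cannot conclude $q$-divisibility term by term; one must instead show that the parts which are not $q$-divisible always cancel by the time one reaches the top of the lower central series, using that every factor entering the product is genuinely a $q$-th power of a word in the $g_j$ and their conjugates. This is the heart of the matter, and is essentially the content of the cited lemma of Ol'shanskii--Osin, whose own argument could be followed instead.
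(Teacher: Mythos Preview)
The paper does not give its own proof of this lemma: it is quoted verbatim from Ol'shanskii--Osin and used as a black box. So there is no ``paper's proof'' to compare against, and your sketch must stand on its own.

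The architecture of your argument is sound up to the point where you reduce everything to the claim that in the free nilpotent group $N$ of class $c$ one has $L_q\cap\gamma_c(N)\subseteq q\,\gamma_c(N)$. The difficulty you flag at the end is not just a technicality: the claim is \emph{false} as stated. Take $N$ free nilpotent of class $3$ on $x,y$, set $g_1=x$, $g_2=y$, and let $q$ be even. Then $[x^{q},y]=[x,y]^{q}[[x,y],x]^{\binom{q}{2}}$ and $[x,y^{q}]=[x,y]^{q}[[x,y],y]^{\binom{q}{2}}$ both lie in $L_q=\langle\langle x^{q},y^{q}\rangle\rangle$, hence so does their quotient
\[
[x^{q},y]\,[x,y^{q}]^{-1}=\bigl([[x,y],x]\,[[x,y],y]^{-1}\bigr)^{\binom{q}{2}}\in L_q\cap\gamma_3(N).
\]
For even $q$ the exponent $\binom{q}{2}=q(q-1)/2$ is not divisible by $q$, so this element is not in $q\gamma_3(N)$. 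Thus the inductive step you propose via Hall--Petrescu cannot be repaired by a cancellation argument: the top-weight contribution genuinely fails to be $q$-divisible. Your final sentence, that one could instead ``follow the argument of Ol'shanskii--Osin'', is circular, since that is precisely the lemma you are asked to prove.

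What survives of your approach is the reduction to a torsion-free nilpotent quotient and the use of residual finiteness there; what is missing is the correct replacement for the false $q$-divisibility claim. One needs a weaker but true statement tailored to the specific elements $g_i$, not to arbitrary elements of $L_q$: for instance, controlling the order of each $g_i$ in a suitable finite nilpotent quotient of $N$ directly, rather than trying to bound all of $L_q\cap\gamma_d(N)$ at once.
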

\vskip 2mm

Consider $k=m$. By \cref{lem: olshanski osin}, there is an $M\in\mathbb{N}$ 
such that for all $q\geq M$ there is a finite index normal subgroup $N_q$ in $F$ 
such that for all $i$, $1\leq i\leq m$, $g_i^s\notin N_q$ whenever $1\leq s\leq m$, but $g_i^{q}\in N_q$. Consider $F_d$ the non-abelian free group of rank $d$. Then, the residual deficiency of $F_d/\langle\langle g_1^q,\ldots,g_m^q \rangle\rangle$ is greater than or equal to $d-\sum_{i=1}^{m}1/(m+1)$, which is greater than one. Hence,
\vskip 2mm

\begin{prop}\label{thm: res def greater than one for qts of free grps}
Let $F_d$ be a non-abelian free group of rank $d\geq 2$, and let $g_1\ldots,g_k$ be arbitrary elements of $F_d$. Then $G=F_d/\langle\langle g_1^q,\ldots,g_m^q  \rangle\rangle$ has residual deficiency greater than one for all but finitely many $q\in\mathbb{N}$.
\end{prop}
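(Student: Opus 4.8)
The plan is to convert the combinatorial input of \cref{lem: olshanski osin} into a lower bound on the order of each generator in the residual quotient. First I would discard every $g_i$ that equals the identity: for such an $i$ the relator $g_i^{q}$ is trivial and contributes nothing to the normal closure, and if all the $g_i$ are trivial then $G=F_d$, so $rdef(G)=d\geq 2>1$. So, after relabelling, assume $g_1,\dots,g_m$ are non-trivial. Apply \cref{lem: olshanski osin} with $k=m$: there is an $M\in\mathbb{N}$ such that for every $q\geq M$ there is a finite index normal subgroup $N_q\trianglelefteq F_d$ with $g_i^{\,s}\notin N_q$ for $1\leq s\leq m$ and $g_i^{\,q}\in N_q$, for all $1\leq i\leq m$.

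Now fix $q\geq M$ and put $G=F_d/\langle\langle g_1^{q},\dots,g_m^{q}\rangle\rangle$, with canonical map $\varphi\colon F_d\to G$. Since each $g_i^{q}\in N_q$ and $N_q$ is normal, $\langle\langle g_1^{q},\dots,g_m^{q}\rangle\rangle\leqslant N_q$, so $G$ surjects onto the finite group $F_d/N_q$. Consequently the finite residual $R_G$ lies in the kernel of this surjection, and $G/R_G$ itself surjects onto $F_d/N_q$. Because the order of the image of an element can only decrease (divide) under a quotient map, the order of $\varphi(g_i)$ in $G/R_G$ is at least the order of $g_i$ in $F_d/N_q$, which is $\geq m+1$ since $g_i^{\,s}\notin N_q$ for $1\leq s\leq m$. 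Writing $g_i=v_i^{e_i}$ with $v_i$ not a proper power in $F_d$, the order of $\varphi(v_i)$ in $G/R_G$ is a multiple of the order of $\varphi(g_i)$ in $G/R_G$ (both finite, as $\varphi(v_i)^{e_i q}=1$ in $G$), hence also $\geq m+1$.

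Finally, $P=\langle X\mid v_1^{\,e_1 q},\dots,v_m^{\,e_m q}\rangle$, with $X$ freely generating $F_d$, is a finite presentation of $G$ written in the normal form required by \cref{def: residual def}, and if $k_i\geq m+1$ denotes the order of $\varphi(v_i)$ in $G/R_G$ then
\[
rdef(G)\ \geq\ rdef(P)\ =\ d-\sum_{i=1}^{m}\frac{1}{k_i}\ \geq\ d-\frac{m}{m+1}\ >\ d-1\ \geq\ 1,
\]
using $d\geq 2$. As this holds for every $q\geq M$, the group $F_d/\langle\langle g_1^{q},\dots,g_m^{q}\rangle\rangle$ has residual deficiency greater than one for all but finitely many $q\in\mathbb{N}$. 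There is no serious obstacle: the whole weight of the argument rests on \cref{lem: olshanski osin}, and the only points needing attention are the harmless reduction to non-trivial generators and the replacement of each $g_i$ by its non-proper-power root, which can only increase the relevant order in $G/R_G$ and so preserves the estimate.
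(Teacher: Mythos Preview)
Your proof is correct and follows essentially the same route as the paper: apply \cref{lem: olshanski osin} with $k=m$ to obtain, for every $q\geq M$, a finite quotient $F_d/N_q$ in which each $g_i$ has order at least $m+1$, and deduce $rdef(G)\geq d-m/(m+1)>1$. You supply details the paper leaves implicit---the reduction to non-trivial $g_i$, the passage from each $g_i$ to its non-proper-power root $v_i$ (required by \cref{def: residual def}), and the verification that $G/R_G$ surjects onto $F_d/N_q$---but the core idea and the key estimate are identical.
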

\vskip 2mm

The authors of \cite{olshanskii-osin} used \cref{lem: olshanski osin} to prove that for all but finitely many $q\in \mathbb{N}$, the group $G=F_r/\langle\langle g_1^q,\ldots,g_m^q  \rangle\rangle$ has a finite index normal subgroup with deficiency greater than one. \cref{ex: resdef-1 < DG} shows that having a finite index normal subgroup with deficiency greater than one does not imply the group has residual deficiency greater than one. Therefore \cref{thm: res def greater than one for qts of free grps} says something stronger about $G=F_r/\langle\langle g_1^q,\ldots,g_m^q  \rangle\rangle$.
\vskip 2mm

\cref{prop: rdef qt fin order} and \cref{thm: rdef qt inf order} give lower bounds for the residual deficiency of a quotient $G/M$, where $M$ is finitely generated. In some cases, the lower bounds are such that if the residual deficiency of $G$ is greater than one, then the residual deficiency of $G/M$ is also greater than one. The following discussion examines a case when the residual deficiency of $G/M$ is not greater than one. The aim is to find conditions under which $G/M$ has a finite index subgroup with residual deficiency greater than one.
\vskip 2mm

\cref{thm: res size inequality: alternative} says that
\[
relsize(M;H,K\cap H)\leq |G:H|relsize(M;G,K),
\]
where $G$ is a finitely presented group, $H$ a finite index normal subgroup of $G$, $K$ a normal subgroup of $G$ and $M$ a finitely generated normal subgroup of $G$ contained in $H$. The proof in \cref{thm: res size inequality: alternative} first shows that if $S=\{ s_1, \ldots,s_d \}$ is a generating set for $M$ as a normal subgroup of $G$, then
\begin{equation}\label{eq: submult of relsize}
relsize(S';H,K\cap H)\leq |G:H|relsize(S;G,K),
\end{equation}
where $S'$ is a generating set for $M$ as a normal subgroup in $H$, as obtained in \cref{lemma}. However, from \cref{eq: thm rel size ineq}, the previous inequality is strict when either $\nu(s_i;H,K\cap H)^{-1}<l_i\nu(s_i;G,K)^{-1}$, or $l_i/k_i<1$, for some $i$, $1\leq i\leq d$. Finding conditions when $\nu(s_i;H,K\cap H)^{-1}<l_i\nu(s_i;G,K)^{-1}$ has proved to be difficult. However, there are conditions that imply $l_i/k_i<1$. 
\vskip 2mm

Before specifying these conditions, let us examine the utility of having a strict inequality in \cref{eq: submult of relsize}. 
\vskip 2mm

Suppose $G$ is a finitely presented non-residually finite group. Let $M$ be a finitely generated normal subgroup of $G$, contained in $R_G$, normally generated in $G$ by $S=\{ s_1, \ldots,s_d \}$. Moreover, assume $rdef(G)-relsize(S;G,R_G)\geq 1$. It is worth noting that by \cref{prop: rdef qt fin order}
\begin{equation}\label{eq: exploiting submult of relsize}
1\leq rdef(G)-relsize(S;G,R_G)\leq rdef(G/M).
\end{equation}
The residual deficiency of $G/M$ may be greater than one. However, if there is no evident way of verifying it and $l_i/k_i<1$, then \cref{eq: thm supermult rdef} and a strict inequality in \cref{eq: submult of relsize} give
\[
0\leq |G:H|\big(rdef(G)-relsize(S;G,R_G)-1\big)<rdef(H)-relsize(S';H,R_G)-1.
\]
Hence,
\[
1<rdef(H)-relsize(S';H,R_G)\leq rdef(H/M),
\]
where the last inequality is a consequence of \cref{prop: rdef qt fin order}.
\vskip 2mm

Therefore, if we have a strict inequality in \cref{eq: submult of relsize}, the subgroup $H/M$, which is a finite index normal subgroup of $G/M$, has residual deficiency greater than one. In particular, all properties implied by having residual deficiency greater than one, which are inherited under finite index supergroups, are enjoyed by $G/M$.
\vskip 2mm

The following proposition gives conditions under which \cref{eq: submult of relsize} is a strict inequality.
\vskip 2mm

\begin{prop}\label{thm: resdef equal to one}
Let $G$ be a non-residually finite, finitely presented group, such that $rdef(G)=n>1$, where $n\in \mathbb{N}$. Let $W=\{w_1,\ldots,w_{n-1}\}$ be a set of non-trivial elements in $R_G$. Suppose $w_i$, for some $i$, $1\leq i\leq n-1$, is such that there is a non-trivial element $a\in C_G(w_i)-R_G$. Then, either $rdef\big(G/\langle\langle W\rangle\rangle\big)>1$, or $G/\langle\langle W\rangle\rangle$ has a finite index subgroup $N$ such that $rdef(N)>1$.
\end{prop}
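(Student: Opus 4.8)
The plan is to follow the discussion preceding the statement, combining \cref{prop: rdef qt fin order}, \cref{thm: res size inequality: alternative} and \cref{thm: supermult rdef}. Write $M=\langle\langle W\rangle\rangle$, the normal closure of $W$ in $G$; since $W\subseteq R_G$ we have $M\subseteq R_G$, and every finite index subgroup of $G$ automatically contains $M$. First I would feed $W$ into \cref{prop: rdef qt fin order} to obtain $rdef(G/M)\geq rdef(G)-relsize(W;G,R_G)=n-relsize(W;G,R_G)$. Because each $w_i\in R_G$, we have $\nu(w_i;G,R_G)\geq 1$, so $relsize(W;G,R_G)=\sum_{i=1}^{n-1}\nu(w_i;G,R_G)^{-1}\leq n-1$, whence $rdef(G/M)\geq 1$. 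If $relsize(W;G,R_G)<n-1$, equivalently $\nu(w_{i};G,R_G)>1$ for some $i$, then $rdef(G/M)>1$ and we are in the first alternative. Hence I may assume $\nu(w_i;G,R_G)=1$ for every $i$ --- precisely the regime in which the relative sizes are as large as possible.

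The core of the argument is the production of the finite index subgroup $N$ under this assumption. The key point is that $\nu(w_i;G,R_G)=1$ forces every root of $w_i$ in $G$ to lie in the kernel of $G\to G/R_G$, namely $R_G$; in particular the minimal root of $w_i$ with respect to $R_G$ may be taken to be $w_i$ itself, which lies in $R_G$. Consequently, for \emph{any} finite index normal subgroup $H$ of $G$ we have $R_G\subseteq H$, so the quantity $l_i=o(\psi(w_i),G/H)$ occurring in the proof of \cref{thm: res size inequality: alternative} equals $1$ for every $i$. Now invoke the centraliser hypothesis: say $a\in C_G(w_1)-R_G$ is non-trivial. Since $R_G$ is the intersection of the finite index subgroups of $G$ and $a\notin R_G$, some finite index subgroup of $G$ omits $a$; let $H$ be its core in $G$, a finite index normal subgroup of $G$ with $a\notin H$. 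As $a\in C_G(w_1)$ but $a\notin C_H(w_1)=C_G(w_1)\cap H$, we get $k_1:=|C_G(w_1):C_H(w_1)|\geq 2$. Since $M\subseteq R_G\subseteq H$, \cref{thm: res size inequality: alternative} applies with $K=R_G$, and --- all $w_i$ having positive relative size, so that its main case is the relevant one --- its proof yields a normal generating set $S'$ of $M$ in $H$ with, by \cref{eq: thm rel size ineq},
\[
relsize(S';H,R_G)\ \leq\ |G:H|\sum_{i=1}^{n-1}\frac{l_i}{k_i}\,\nu(w_i;G,R_G)^{-1}.
\]
Here $l_i=1$ and $\nu(w_i;G,R_G)^{-1}=1$, while $l_1/k_1\leq 1/2<1$ and $l_i/k_i\leq 1$ for $i\geq 2$, so the right-hand side is strictly less than $|G:H|(n-1)$.

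The final step is the chain of estimates from the paragraph before the statement. Using that the finite residual is unchanged under passage to a finite index subgroup, $R_H=R_G$, and that $S'\subseteq R_H$ (its elements are conjugates of the $w_i$, which lie in $R_G=R_H$), \cref{prop: rdef qt fin order} applied to $H$ gives $rdef(H/M)\geq rdef(H)-relsize(S';H,R_G)$. By \cref{thm: supermult rdef}, $rdef(H)-1\geq |G:H|\,\bigl(rdef(G)-1\bigr)=|G:H|(n-1)>relsize(S';H,R_G)$, hence $rdef(H/M)>1$. Since $H\trianglelefteq G$ has finite index, $N:=H/M$ is a finite index subgroup of $G/\langle\langle W\rangle\rangle$ with $rdef(N)>1$, which is the second alternative. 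I expect the main obstacle to be ensuring that the inequality in \cref{eq: thm rel size ineq} really becomes strict: one needs simultaneously $l_1=1$ (from the minimal roots lying in $R_G$, a consequence of the case $\nu(w_i;G,R_G)=1$) and $k_1\geq 2$ (supplied by the element $a$ centralising $w_1$ but not lying in $R_G$), together with the facts that no $w_i$ has zero relative size, so that \cref{thm: res size inequality: alternative} is used in its main case, and that $R_H=R_G$, so that the relative sizes computed inside $H$ are exactly the input \cref{prop: rdef qt fin order} requires. None of these points is deep, but aligning all of them at once is where the care lies.
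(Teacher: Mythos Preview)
Your proposal is correct and follows essentially the same route as the paper: split on whether $relsize(W;G,R_G)<n-1$, and in the remaining case use $\nu(w_i;G,R_G)=1$ to force $l_i=1$, then exploit $a\in C_G(w_i)\setminus R_G$ to find a finite index normal $H$ with $k_i>1$, making \cref{eq: thm rel size ineq} strict and concluding via \cref{thm: supermult rdef} and \cref{prop: rdef qt fin order}. You have in fact been more explicit than the paper on two points it leaves implicit: that $R_H=R_G$ for the finite index normal $H$ (needed to feed $S'$ into \cref{prop: rdef qt fin order} applied to $H$), and that the elements of $S'$, being conjugates of the $w_i$, lie in $R_G$ and have positive relative size.
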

\begin{proof}
By \cref{prop: rdef qt fin order}
\[
rdef\big(G/\langle\langle W\rangle\rangle\big)\geq rdef(G)-relsize(W;G,R_G).
\]
Since $W$ has $n-1$ elements, then $relsize(W;G,R_G)\leq n-1$. If $relsize(W;G,R_G)<n-1$, then $rdef\big(G/\langle\langle W\rangle\rangle\big)>1$.
\vskip 2mm

Suppose $relsize(W;G,R_G)=n-1$. First, this means that 
\[
rdef(G/\langle\langle W\rangle\rangle)\geq rdef(G)-relsize(W;G,R_G)=1. 
\]
Second, that $\nu(w_i;G,R_G)=1$, for all $i$, $1\leq i\leq n-1$. This implies that $w_i$ has no minimal root in $G$ with respect to $R_G$, with order greater than one in $G/R_G$. In terms of the notation in \cref{thm: res size inequality: alternative}, if $H$ is a finite index normal subgroup of $G$, then $l_i=1$ for all $i$, $1\leq i\leq n-1$.
\vskip 2mm

Consider $a$ the non-trivial element in $C_G(w_i)-R_G$. Since $G/R_G$ is residually finite, then there is a finite index normal subgroup $H$ in $G$, such that $a\notin H$. Therefore, $k_i=|C_G(w_i):C_H(w_i)|>1$. Since $l_i=1$ for all finite index normal subgroups $H$, then $l_i/k_i=1/k_i<1$. Then, by the arguments preceding \cref{thm: resdef equal to one}, $H/\langle\langle W\rangle\rangle=:N$, is a finite index normal subgroup of $G/\langle\langle W\rangle\rangle$ which has residual deficiency greater than one.
\end{proof}
\vskip 2mm

\begin{ex}
In \cite{toledo}, D. Toledo proved the existence of short exact sequences
\[
1\longrightarrow K\longrightarrow \Phi\longrightarrow \Gamma \longrightarrow 1,
\]
with the following properties. The group $\Gamma$ is residually finite and infinite. The group $\Phi$ is finitely presented (the fundamental group of a smooth projective variety) and not residually finite. The injection $K\longrightarrow \Phi$ is central. That is, $K$, as a subgroup of $\Phi$, is contained in $Z(\Phi)$, the centre of $\Phi$. Finally, as $\Phi/K$ is isomorphic to $\Gamma$ and $\Gamma$ is residually finite, then $R_{\Phi}$ is contained in $K$.
\vskip 2mm

In order to apply \cref{thm: resdef equal to one}, we need $rdef(\Phi)\geq 2$. We do not know what is the residual deficiency of $\Phi$, so take $k\in\mathbb{N}$ such that $rdef(\Phi)+k\geq 2$ and consider $G=\Phi\ast F_k$. Therefore, $rdef(G)=rdef(\Phi\ast F_k)\geq rdef(\Phi)+k\geq 2$. Moreover, since $\Phi\leqslant G$, then $G$ is not residually finite.
\vskip 2mm

Note that $R_G\cap \Phi=R_\Phi$. Therefore, if $\phi\in\Phi-R_{\Phi}$ then $\phi\notin R_{G}$. As $\Gamma\cong \Phi/K$ is non-trivial and $R_{\Phi}\leqslant K$, then such $\phi$ exists.

\vskip 2mm

Consider $W=\{w_1,\ldots,w_n\}\in R_{\Phi}$, where $n$ is such that $rdef(\Phi)+k-n=1$. As $R_{\Phi}$ is contained in $Z(\Phi)$, then $w_i$ commutes with all elements of $\Phi$, for all $i$, $1\leq i\leq n$. Consider $\{t_1,\ldots,t_k\}$ a free set of generators for $F_k$. Let $t$ be any of them. Then $tw_i t^{-1}$ commutes with all elements in $t\Phi t^{-1}$, for all $i$, $1\leq i\leq n$.
\vskip 2mm

So far we have $tw_it^{-1}\in t R_{\Phi}t^{-1}\leqslant tZ(\Phi)t^{-1}\leqslant t\Phi t^{-1}\leqslant G$. We also know that $tR_{\Phi}t^{-1}\leqslant R_G$ and that if $\phi\in\Phi-R_{\Phi}$, then not only $\phi\notin R_{G}$ but also $t\phi t^{-1}\notin R_G$ as $R_G$ is normal in $G$. Since $t\phi t^{-1}$ commutes with $tw_it^{-1}$ for all $\phi\in\Phi-R_{\Phi}$ and all $i$, then $t\phi t^{-1} \in C_G(tw_it^{-1})-R_G$ for all $\phi\in\Phi-R_{\Phi}$ and any $i$, $1\leq i\leq n$.
\vskip 2mm

Let $tWt^{-1}=\{tw_1t^{-1},\ldots,tw_nt^{-1}\}$. We already have that if $\phi\in\Phi-R_{\Phi}$, then $t\phi t^{-1} \in C_G(tw_it^{-1})-R_G$. Therefore, by \cref{thm: resdef equal to one}, either the residual deficiency of $G/\langle\langle tWt^{-1} \rangle\rangle$ is greater than one, or $G/\langle\langle tWt^{-1} \rangle\rangle$ has a finite index subgroup with residual deficiency greater than one.% or $rdef\big(G/\langle\langle tWt^{-1} \rangle\rangle\big)>1$.
\end{ex}

\vskip 10 mm

\end{document}